\newtheorem{remark}{Remark}
\newtheorem{theorem}{Theorem}
\newtheorem{proposition}{Proposition}
\newtheorem{definition}{Definition}
\begin{document} 

\title{Degenerate anisotropic elliptic problems \\
and magnetized plasma simulations.}
\author{St\'ephane Brull$^{(1)}$, Pierre Degond$^{(2,3)}$, Fabrice Deluzet$^{(2,3)}$ }
\date{}
\maketitle

\begin{center}
1-Institut de Math\'{e}matiques de Bordeaux UMR 5251\\
Equipe Math\'{e}matiques Appliqu\'{e}es de Bordeaux (MAB)\\
Universit\'{e} Bordeaux 1\\
351, cours de la Lib\'{e}ration - 33405 TALENCE cedex
FRANCE\\
email: Stephane.Brull@math.u-bordeaux1.fr
\end{center}

\begin{center}
2-Universit\'{e} de Toulouse; UPS, INSA, UT1, UTM ;\\ 
Institut de Math\'{e}matiques de Toulouse ; \\
F-31062 Toulouse, France. \\
email: fabrice.deluzet@math.univ-toulouse.fr \\
\end{center}

\begin{center}
3-CNRS; Institut de Math\'{e}matiques de Toulouse UMR 5219 ;\\ 
F-31062 Toulouse, France.\\
email: pierre.degond@math.univ-toulouse.fr \\
\end{center}

\vspace{0.5 cm}

\begin{abstract}
This paper is devoted to the numerical approximation of a degenerate  anisotropic
elliptic problem. The numerical method is designed for arbitrary space-dependent anisotropy directions and does not require any specially adapted coordinate system. It is also designed to be equally accurate in the strongly and the mildly anisotropic cases. The method is applied to the Euler-Lorentz system, 
in the drift-fluid limit. This system provides a model for magnetized plasmas.
\end{abstract}

\medskip
{\bf Keywords:} Anisotropic elliptic problem, Variational formulation, Asymptotic-Preserving Scheme, Plasmas, Euler equations, Lorentz force, Large
Magnetic field, Low-Mach number, Drift-fluid limit

\medskip
{\bf AMS subject classification:} 65N06, 65N12, 65M06, 65M12, 76W05, 76X05, 76N17

\medskip
{\bf Acknowledgements:} This work has been partially
supported by the Marie Curie Actions of the European Commission in the
frame of the DEASE project (MEST-CT-2005-021122), by the 
'F\'ed\'eration de recherche CNRS sur la fusion par confinement magn\'etique', 
by the Association Euratom-CEA in the framework of the contract 
'Gyro-AP' (contract \# V3629.001 avenant 1) and by the University Paul Sabatier
in the frame of the contract 'MOSITER'.
This work was performed while the first author held a post-doctoral position funded by 
the Fondation 'Sciences et Technologies pour 
l'A\'eronautique et l'Espace', in the frame of the project 'Plasmax' 
(contract \# RTRA-STAE/2007/PF/002). The authors wish to express their gratitude
to G.~Falchetto, X.~Garbet and M.~Ottaviani from CEA-Cadarache and G. Gallice and C. Tessieras from CEA-Cesta for
fruitful discussions and encouragements.

%%%%%%%%%%%%%%%%%%%%%%%%%%%%%%%%%%%%%%%%%%%%%%%%%%%%%%%%%%%%%%%%%%
%%%%%%%%%%%%%%%%%%%%%%%%%%%%%%%%%%%%%%%%%%%%%%%%%%%%%%%%%%%%%%%%%%
%%%%%%%%%%%%%%%%%%%%%%%%%%%%%%%%%%%%%%%%%%%%%%%%%%%%%%%%%%%%%%%%%%
%%%%%%%%%%%%%%%%%%%%%%%%%%%%%%%%%%%%%%%%%%%%%%%%%%%%%%%%%%%%%%%%%%

\setcounter{equation}{0}
\section{Introduction}
\label{DA1}

This paper discusses the numerical resolution of degenerate anisotropic elliptic problems of the form:  
\begin{eqnarray}
\label{P1} - \left(   b \cdot { \nabla} \right) \left( {  \nabla }
\cdot ( b \, \phi^{\varepsilon}) \right) + \varepsilon \phi^{\varepsilon}
&=&f ^{\varepsilon} , \hspace*{2 mm} \mbox{in} \hspace*{2 mm} \Omega , 
\\ \label{bc1} \left(  b \cdot \nu \right) { \nabla } \cdot
\left(  b \, \phi^{\varepsilon } \right)&= &0 
\hspace*{2 mm} \mbox{on} \hspace*{2 mm} \partial \Omega ,
\end{eqnarray}
where $\Omega \subset \mathbb{R}^2$ or $\mathbb{R}^3$,
$f^\varepsilon$ is a given function, $b$ is a normalized vector field defining the anisotropy direction and $\varepsilon$ measures the strength of this
    anisotropy. In this expression $\nabla$  and $\nabla \cdot$ are
    respectively  the gradient and divergence operators.
 The unit outward normal at $x \in \partial \Omega$ is denoted by $\nu$.
 In the context of plasmas, $\varepsilon$ is related to the
 gyro period (i.e. the period of the gyration motion of the particles about the magnetic field lines), and the anisotropy direction  $b$ satisfies $b = B/|B|$ with the magnetic
 field $B$ verifying $\nabla \cdot B=0$. Eq. \eqref{P1} may also arise in other contexts, such as rapidly rotating flows, shell theory and may also be found when special types of semi-implicit time
  discretization of diffusion equations are used. 
  
  The elliptic
  equation is not in the usual divergence form due to an exchange
  between the gradient and divergence operators. However, the
  methodology would apply equally well to the operator $\nabla \cdot (
  (b \otimes b) \cdot \nabla   \phi))$, up to some simple changes. The 
  expression considered here is motivated by the application to the
  Euler-Lorentz system of plasmas. This application has already been considered in a previous study \cite{DDSV_JCP09} but we introduce two important developments. First the
present numerical method does not request the development of a special coordinate system adapted to $b$. In \cite{DDSV_JCP09}, $b$ was
assumed aligned with one coordinate direction.
Second, the present paper considers Neumann boundary conditions instead of Dirichlet ones as in \cite{DDSV_JCP09}. Although seemingly innocuous, this change brings in a considerable difficulty, linked with the degeneracy of the limit problem, as explained below.

A classical
discretization of problem (\ref{P1}), (\ref{bc1}) leads to an
 ill-conditioned linear system as $\varepsilon \to 0$. Indeed 
 setting formally $\varepsilon = 0$ in (\ref{P1}), (\ref{bc1}), we get:
 \begin{eqnarray}
   & & - ( b \cdot \nabla) \nabla \cdot (b\, \psi) = f^{(0)}\,, \quad \text{in } \Omega , \label{eq:Singular:limit_1}
\\
& &     (b \cdot \nu) \nabla \cdot (b\, \psi) = 0 \,, \quad  \text{on } \partial \Omega ,\label{eq:Singular:limit_2}
 \end{eqnarray}
with $f^{(0)} = \lim_{\varepsilon \to 0} f^\varepsilon$. 
 The homogeneous system associated to \eqref{eq:Singular:limit_1}, \eqref{eq:Singular:limit_2} admits an infinite number of solutions, namely  all
 functions $\psi$ satisfying $\nabla \cdot ( b \psi) = 0$. This
 degeneracy results from the Neumann boundary conditions \eqref{eq:Singular:limit_2} and would also occur if  periodic boundary conditions were used. On the other hand, \eqref{eq:Singular:limit_1} is not degenerate if supplemented with  Dirichlet
 or Robin conditions, which was the case considered in \cite{DDSV_JCP09}. A standard numerical approximation of \eqref{eq:Singular:limit_1}, \eqref{eq:Singular:limit_2}  generates a matrix whose condition number blows up as $\varepsilon \to 0$, leading to very time consuming and/or poorly accurate solution algorithms. 
 
 To bypass these limitations, we
 follow the idea introduced in \cite{DDN_MMS10} and use a decomposition of
 the solution in its average along the $b$-field lines and a fluctuation about this average.
 This decomposition ensures an accurate computation
 of the solution for all values of $\varepsilon$. 
 In \cite{DDN_MMS10}, this decomposition approach was developed for a uniform $b$ and a coordinate system with one coordinate direction aligned with $b$. To extend this approach to arbitrary anisotropy fields $b$, a possible way is to use an adapted curvilinear coordinate system with one coordinate curve tangent to $b$. 
  This is the route followed by \cite{BBNY_prep}, which proposes an extension of
  \cite{DDN_MMS10} in the context of ionospheric plasma physics, where the 
  anisotropy direction is known analytically (given by the earth dipolar magnetic field). The approach developed
  here is different and aims at a method which does not request the generation of special curvilinear coordinates. Indeed, in the general case, computing such coordinates can be complex and costly, especially for time-dependent problems where $b$ evolves in time.

For this purpose, we solve a variational problem for each of the terms of the decomposition.
The main difficulty lies in the discretization of the functional spaces in
which each component of the solution is searched. 
In the present paper, this difficulty is solved by introducing two kinds of variational systems, one corresponding to a second-order elliptic problem (for the average) and one, to a fourth order system (for the fluctuation). An alternative to this method is proposed in \cite{DDLNN_sub}. It avoids the resolution of a fourth-order problem at the price of the introdution of Lagrange multipliers which lead to a larger system. In the present paper, we design a method which breaks the complexity of the problem in smaller pieces and requires less computer ressources. 

As an application of the method and a motivation for studying problem \eqref{eq:Singular:limit_1}, \eqref{eq:Singular:limit_2}, the drift-fluid limit of the isothermal Euler-Lorentz
  system is considered. These equations model the evolution of a magnetized plasma.  In this case, the anisotropy direction is that of the magnetic field and the parameter $\varepsilon$ is the reciprocal of
  the non dimensional cyclotron frequency. The drift-fluid limit $\varepsilon \to 0$ of the Euler-Lorentz system is singular because the momentum
  equation becomes degenerate. In this paper, we propose a scheme able to handle both the  
$\varepsilon \sim 1$ and $\varepsilon \ll 1$ regimes, giving rise
to consistent approximations of both the Euler-Lorentz model and its drift-fluid
limit, without  any
constraint on the space and time steps related to the possible small value of
$\varepsilon$. Schemes having such properties are referred to as \textit{Asymptotic-Preserving
(AP)} schemes. These schemes are particularly efficient in situations in which part of the simulation domain is in the
asymptotic regime and part of it is not. 
Indeed, in most practical cases, the parameter $\varepsilon$ assumes a local value which may change from one location to the next or which may evolve with time. 

The usual approach for
dealing with such occurences is through domain
decomposition: the full Euler-Lorentz model is used in the region
where $\varepsilon =O(1)$ and the drift-fluid limit model is used
where $\varepsilon \ll 1$. There are several drawbacks in using this approach. The
first one is the choice of the position of the interface (or
cross-talk region), which can influence the outcome of the
simulation. If the interface evolves in time, an algorithm for
interface motion has to be devised and some remeshing must be used to
ensure compatibility between the mesh and the interface, which requires heavy code developments and can be
quite CPU time consuming. Determining the right coupling strategy
between the two models can also be quite challenging and the outcome
of the numerical simulations may also depend on this choice. Because
these questions do not have straigthforward answers,
domain decomposition strategies often lack robustness and
reliability. Here, using the original model with an AP discretization
method everywhere prevents from these 
artefacts and permits to use the same code
everywhere for both regimes.  

We conclude this introductory section by some bibliographical remarks. 
In magnetized plasma simulations, many works are based on the use of curvilinear coordinate systems where one of the coordinate curves is tangent to the magnetic field (see e.g. \cite{Miyamoto_CH07}, the gyro-kinetic and gyro-fluid developments \cite{Beer_Hammett_96,  Dorland_Hammett_PhysFluidsB93, Grandgirard_CNSNS08, Hammett_Dorland_PPCF93} and the many attempts for generating specialized coordinate systems \cite{Beer_Cowley_Hammett_95, Boozer_Fluids82, Dhaeseleer_SSCP91, Dimits_PhysrevE93, Hamada_NuclFus62, Kaveeva_TPL04, Stern_JGR67}). The present work, together with \cite{DDLNN_sub} is one of the very few attempts to design numerical methods free of the use of special coordinate systems (see also \cite{Ottaviani_sub}). The key idea behind this method is the concept of Asymptotic Preserving (AP) schemes as described above. AP-schemes have first been introduced by S. Jin \cite{Jin_SISC99} in the context of diffusive limits of transport models. They have recently found numerous applications to plasma physics in relation e.g. to quasineutrality \cite{BCDS_JSC09, CDV_JCP07, DDNSV_JCP07, DLSV_EPB, DLV_SINUM08} and strong magnetic fields 
\cite{DDLNN_sub, DDN_MMS10, DDSV_JCP09} as well as to fluid-mechanical problems such as the small Mach-number limit of compressible fluids \cite{Deg_Tan_CICP_ta}. Other applications of AP-schemes can be found in \cite{Bue_Cor_M2AN02, Buet_Despres_JCP06, Car_Gou_Laf_JCP08, Fil_Jin_JCP10, Klar_SINUM99, Lem_Mie_SISC08, McC_Low_JCP08}. Numerical methods for anisotropic problems have been extensively
studied in the literature using numerous techniques such as domain decomposition techniques 
\cite{Giraud,Koronskij}, Multigrid methods, smoothers 
\cite{ICASE}, the $hp$-finite element method \cite{Melenek}. However, these methods are based on a discretization of the anisotropic PDE as it is written. The method presented here as well as in \cite{DDSV_JCP09, DDN_MMS10, DDLNN_sub} relies on a totally different concept, namely viewing the anisotropy as a singular perturbation and using Asymptotic-Preserving techniques. 
  
\medskip 
This paper is organized as follows. In section~\ref{elip} the
  solution methodology for the degenerate anisotropic elliptic problem
  (\ref{P1}), (\ref{bc1}) is
  detailed. Section~\ref{num} is devoted to the
  discretization strategy. In section~\ref{DA2}  the drift-fluid limit of the isothermal
  Euler-Lorentz system is introduced. The AP-scheme is
  derived, giving rise to the anisotropic elliptic problem (\ref{P1}), (\ref{bc1}).
  The numerical method for the anisotropic elliptic
  problem is validated in
  section~\ref{simu:ellip}. Finally a numerical application to the
  Euler-Lorentz system is given in section~\ref{sec:comp}.

%%%%%%%%%%%%%%%%%%%%%%%%%%%%%%%%%%%%%%%%%%%%%%%%%%%%%%%%%%%%%%%%%%
%%%%%%%%%%%%%%%%%%%%%%%%%%%%%%%%%%%%%%%%%%%%%%%%%%%%%%%%%%%%%%%%%%
%%%%%%%%%%%%%%%%%%%%%%%%%%%%%%%%%%%%%%%%%%%%%%%%%%%%%%%%%%%%%%%%%%
%%%%%%%%%%%%%%%%%%%%%%%%%%%%%%%%%%%%%%%%%%%%%%%%%%%%%%%%%%%%%%%%%%

\setcounter{equation}{0}
\section{A decomposition method for degenerate ani\-sotropic elliptic  problems} 
\label{elip}

We first present the methodology in the simpler case of a uniform $b$-field. The method will then be extended to an arbitrary $b$-field.

%%%%%%%%%%%%%%%%%%%%%%%%%%%%%%%%%%%%%%%%%%%%%%%%%%%%%%%%%%%%%%%%%%
%%%%%%%%%%%%%%%%%%%%%%%%%%%%%%%%%%%%%%%%%%%%%%%%%%%%%%%%%%%%%%%%%%
\subsection{Overview of the method in the uniform $b$-field case}
\label{sec:overview:AP}

A two dimensional configuration is considered in this section, with the position variable $(x,y)$ belonging to a square domain $(x,y) \in \Omega=[0,1]\times[0,1] \subset {\mathbb R}^2$. The $b$ field is assumed uniform, equal to the unit vector pointing in the  $y$ direction. In this case, the singular
perturbation problem (\ref{P1}), (\ref{bc1}) reads: 
\begin{eqnarray} 
\label{eq1}
\varepsilon \phi^{\varepsilon}(x,y) - \frac{\partial^2}{\partial y^2} \phi^{\varepsilon}(x,y) &=&
f^{\varepsilon}(x,y), \hspace*{2 mm} \mbox{in} \hspace*{2 mm} ]0,1[ \times ]0,1[,
 \\ 
 \label{eqb} 
 \frac{\partial}{\partial y} \phi^{\varepsilon}(x,y) &=& 0, \hspace*{2
   mm} \mbox{for} \hspace*{1  mm}  y=0 \hspace*{1  mm} \mbox{or} \hspace*{1  mm} y=1 . 
\end{eqnarray}
We assume that:
\begin{equation}
\lim_{\varepsilon \to 0} \left( \frac{1}{\varepsilon} \int_0^1 f^\varepsilon (x,y) \, dy \right) \quad  \mbox{exists and is finite,} \quad \forall x \in [0,1]. 
\label{eq:hypo_cancel}
\end{equation}

This framework is similar to \cite{DDN_MMS10}. Here, we recall the bases of the methodology. The problem
is well posed for all $\varepsilon > 0$ but a standard discretization may lead to ill-conditionned matrices when $\varepsilon \ll 1$. Indeed if $\varepsilon$
  is formally set to zero, we get the following
  degenerate problem
  \begin{equation}\label{eq:simplified:degenerated}
    \begin{split}
       - \frac{\partial^2}{\partial y^2} \psi(x,y) &=
      f^{(0)}(x,y), \hspace*{2 mm} \mbox{in} \hspace*{2 mm} ]0,1[ \times ]0,1[,
      \\ \frac{\partial}{\partial y} \psi(x,y) &= 0, \hspace*{2
        mm} \mbox{for} \hspace*{1  mm}  y=0 \hspace*{1  mm} \mbox{or} \hspace*{1  mm} y=1 , 
    \end{split}
  \end{equation}
  assuming that $f^\varepsilon$ has the following expansion $f^\varepsilon= f^{(0)} + \varepsilon f^{(1)} + o(\varepsilon)$. 
  This system admits a solution under the compatibility condition
  $\int_0^1 \, f^{(0)} (x,y) \,dy= 0$ for all $x \in [0,1]$, which is satisfied thanks to hypothesis (\ref{eq:hypo_cancel}).  However the solution is not unique. Indeed, if
  $\psi$ verifies \eqref{eq:simplified:degenerated} then $\psi+\zeta$
  is also a solution for all functions $\zeta=\zeta(x)$ which depend on the
  $x$-coordinate only. 

On the other hand, the limit $\phi^{(0)} = \lim_{\varepsilon \to 0} \phi^\varepsilon$ is unique. Indeed, it is easy to see that the solution $\tilde \psi$ of (\ref{eq:simplified:degenerated}) such that $\int_0^1 \, \tilde \psi (x,y) \,dy= 0$ for all $x \in [0,1]$ is unique. Since $\phi^{(0)}$ is a particular solution of (\ref{eq:simplified:degenerated}), it can be written 
\begin{equation}
\phi^{(0)} = \tilde \psi + \zeta(x).
\label{eq:phi=psi+zeta}
\end{equation}
 In order to determine $\zeta$ , we integrate \eqref{eq1} with respect to $y$ and get
  \begin{equation}\label{eq:simplified:meanpart}
    \int_0^1 \phi^\varepsilon(x,y) \, dy = \frac{1}{\varepsilon}\int_0^1
    f^\varepsilon(x,y) \, dy  \,,
  \end{equation}
Taking the limit $\varepsilon \to 0$ in this equation and inserting (\ref{eq:phi=psi+zeta}), we get $\zeta(x) = \int_0^1 f^{(1)} (x,y) \, dy$, which determines $\phi^{(0)}$ uniquely. 

Now, if a standard numerical method is applied to (\ref{eq1}), (\ref{eqb}), the resulting matrix will be close, when $\varepsilon \ll 1$, to the singular matrix obtained from the discretization of (\ref{eq:simplified:degenerated}). Therefore, its condition number will blow up as $\varepsilon \to 0$, resulting in either low accuracy, or high computational cost. To overcome this problem, we decompose $\phi^{\varepsilon}$ according to
\begin{eqnarray}
& & \phi^{\varepsilon} = p^{\varepsilon} + q^{\varepsilon}, \qquad p^{\varepsilon} (x) = \int_{0}^{1} \phi^{\varepsilon}(x,y) \, dy , 
\label{eq:decom}
\end{eqnarray}
i.e. $p^{\varepsilon}$ is the average of $\phi^{\varepsilon}$ along straight lines parallel to $b$  and
 $q^{\varepsilon}$ is the fluctuation of the solution with respect to this average. $p^{\varepsilon}$ and $q^{\varepsilon}$ satisfy: 
\begin{eqnarray} 
& & \frac{\partial p^\varepsilon}{\partial y} (x,y) = 0, \quad  \forall (x,y) \in \Omega, \label{eq:peps_constant} \\
& & \int_{0}^{1} q^{\varepsilon}(x,y) \, dy =0, \quad \forall x \in [0,1]. 
\label{moy} 
\end{eqnarray}
They are orthogonal for the scalar product of $L^2$, i.e. $\int_{\Omega} p^\varepsilon q^\varepsilon \, dx \, dy = 0$.

Inserting this decomposition into \eqref{eq:simplified:meanpart}
yields 
\begin{eqnarray}
p^{\varepsilon}(x) = \frac{1}{\varepsilon} \int_{0}^1 \, f^\varepsilon (x,y)\, dy
 , \forall x \in [0,1] . 
 \label{eq:determin_peps}
\end{eqnarray}
Moreover, $p^{\varepsilon}$ satisfies  
$$\lim_{\varepsilon \to 0} p^\varepsilon(x)  =  \lim_{\varepsilon \to 0} \frac{1}{\varepsilon} \int_{0}^1 \, f^\varepsilon (x,y)\, dy = \int_{0}^1 \, f^{(1)} (x,y)\, dy = \zeta(x),$$ 
where $\zeta$ is defined by (\ref{eq:phi=psi+zeta}). Now, $q^{\varepsilon}$ is the solution of the following problem: 
\begin{eqnarray} 
\label{e1d}
- \frac{\partial^2}{\partial y^2 } q^{\varepsilon}(x,y) + \varepsilon
q^{\varepsilon}(x,y) &=&\xi^\varepsilon(x,y) , \hspace*{2 mm} \forall (x,y)
\in [0,1] \times ]0,1[ ,
\\ 
\label{e2d} 
\int_{0}^{1} q^{\varepsilon}(x,y) \, dy &=& 0,\hspace*{2 mm} \mbox{for} \hspace*{2 mm} x \in [0,1],
\\ 
\label{e3d} 
\frac{\partial}{\partial y} q^{\varepsilon}(x,y) &=& 0, \hspace*{2 mm} \mbox{for} \hspace*{2 mm} 
y=0 \; \mbox{or} \; y=1,
\end{eqnarray} 
where 
$$ \xi^\varepsilon = f^\varepsilon - \int_0^1 f^\varepsilon \,dy = f^\varepsilon - \varepsilon p^\varepsilon , $$ 
is the projection of $f^{\varepsilon}$ on the space of functions satisfying (\ref{moy}). Compared to (\ref{eq1}), (\ref{eqb}), system (\ref{e1d})-(\ref{e3d}) involves the additional condition (\ref{e2d}). This condition is important: it makes the system uniformly well-posed when $\varepsilon \to 0$. Additionally, the limit system is 
\begin{eqnarray} 
\label{e1d_e=0}
- \frac{\partial^2}{\partial y^2 } q^{(0)}(x,y) &=& f ^{(0)} , \hspace*{2 mm} \forall (x,y)
\in [0,1] \times ]0,1[ ,
\\ 
\label{e2d_e=0} 
\int_{0}^{1} q^{(0)}(x,y) \, dy &=& 0,\hspace*{2 mm} \mbox{for} \hspace*{2 mm} x \in [0,1],
\\ 
\label{e3d_e=0} 
\frac{\partial}{\partial y} q^{(0)}(x,y) &=& 0, \hspace*{2 mm} \mbox{for} \hspace*{2 mm} 
y=0 \; \mbox{or} \; y=1,
\end{eqnarray}  
and has a unique solution equal to $\tilde \psi$. Consequently, as $\varepsilon \to 0$
$$ \phi^\varepsilon = p^\varepsilon + q^\varepsilon \to \zeta + \tilde \psi = \phi^{(0)}. $$
Therefore, the proposed decomposition leads to two uniformly well-posed problems when $\varepsilon \to 0$, which allows to reconstruct the limit solution $\phi^{(0)}$ of the original problem.

The numerical approximations of conditions (\ref{eq:determin_peps}) or (\ref{e2d}) is delicate if the mesh is not aligned with the $y$ coordinate axis. In order to overcome this problem, a weak formulation is introduced. Define $V = H^1(0,1)$, $K = \{ v \in V \, | \, \partial_y v = 0 \}$. Then, $\phi^\varepsilon$ is the solution of the variational formulation
\begin{eqnarray}
&& \hspace{-1cm} \mbox{Find } \phi^\varepsilon \in V \mbox{ such that} \nonumber \\
&& \hspace{-1cm} \int_{\Omega} \frac{\partial \phi^\varepsilon}{\partial y}\, \frac{\partial \psi}{\partial y} \, dx \, dy + \varepsilon \int_{\Omega} \phi^\varepsilon \, \psi \, dx \, dy = \int_{\Omega} f^\varepsilon \, \psi \, dx \, dy, \quad \forall \psi \in V. \label{eq:var_form}
\end{eqnarray}
Let $K^\bot$ be the orthogonal space to $K$ in $L^2(0,1)$. Now, the decomposition (\ref{eq:decom}), corresponds to the decomposition of $\phi^\varepsilon$ on $K$ and $K^\bot$. Indeed, it is easily checked that  $p^\varepsilon \in K$ and $q^\varepsilon \in K^\bot$ and they are orthogonal, as already noticed. Now, inserting $\psi \in K$ in (\ref{eq:var_form}), we get that $p^\varepsilon$ is the solution of 
\begin{eqnarray}
&& \hspace{-2cm} \mbox{Find } p^\varepsilon \in K \mbox{ such that} \nonumber \\
&& \hspace{-2cm} \int_{\Omega} (p^\varepsilon - \frac{1}{\varepsilon} f^\varepsilon)  \, \psi \, dx \, dy =0, \, \forall \psi \in K, \label{eq:var_form_p}
\end{eqnarray}
which means that $p^\varepsilon$ is the orthogonal projection of $\varepsilon^{-1} f^\varepsilon$ onto $K$. Now, inserting $\psi \in K^\bot$ in (\ref{eq:var_form}) leads to 
\begin{eqnarray}
&& \hspace{-1.5cm} \mbox{Find } q^\varepsilon \in K^\bot \mbox{ such that} \nonumber \\
&& \hspace{-1.5cm} \int_{\Omega} \frac{\partial q^\varepsilon}{\partial y}\, \frac{\partial \psi}{\partial y} \, dx \, dy + \varepsilon \int_{\Omega} q^\varepsilon \, \psi \, dx \, dy = \int_{\Omega} (f^\varepsilon - \varepsilon p^\varepsilon) \, \psi \, dx \, dy, \, \forall \psi \in K^\bot, \label{eq:var_form_q}
\end{eqnarray}
which is the variational formulation of (\ref{e1d})-(\ref{e3d}). 

The use of these variational formulations allows for the discretization of (\ref{eq1}), (\ref{eqb}) on arbitrary meshes compared to the anisotropy direction. This is an important advantadge over the strong formulations (\ref{eq:determin_peps}) or (\ref{e1d})-(\ref{e3d}). These formulations are now generalized to arbitrary anisotropy fields $b$ in the next section.

%%%%%%%%%%%%%%%%%%%%%%%%%%%%%%%%%%%%%%%%%%%%%%%%%%%%%%%%%%%%%%%%%%
%%%%%%%%%%%%%%%%%%%%%%%%%%%%%%%%%%%%%%%%%%%%%%%%%%%%%%%%%%%%%%%%%%
\subsection{Presentation of the method for a general anisotropy field}
\label{sdec}

\subsubsection{Preliminaries}
\label{subsubsec_present_prelim}

This subsection is devoted to the resolution of degenerate elliptic problems (\ref{P1}), (\ref{bc1}) for general anisotropy fields $b$. we first introduce the space 
\begin{eqnarray*}
\mathcal{V} &=& \{ \phi \in L^2 (\Omega ) \, / \nabla \cdot (b \phi ) \in L^2 (\Omega ) \}, \\
 K &=& \{ \phi \in \mathcal{V} \, / \, \nabla \cdot (b \phi ) = 0 \; \mbox{on} \; \Omega \},
\\ 
\mathcal{W} &=& \{ h \in L^2(\Omega) \, / (b \cdot \nabla )h \in   L^2(\Omega) \},
\\ 
\mathcal{W}_0 &=& \{ h \in  \mathcal{W} \, / \, (b \cdot \nu ) h = 0 \; \mbox{on} \; \partial{\Omega}\}.
\end{eqnarray*}
The projection of a function on $K$ is the generalization of the average operation (\ref{eq:determin_peps}), while the projection on $K^\bot$ corresponds to computing its fluctuation. The space $\mathcal{W}_0$ is used to characterize $K^\bot$. The projections on $K$ and $K^\bot$ are well-defined thanks to the:

\begin{theorem} \label{fondamental}
We have the following properties
\begin{enumerate}
\item ${K}$ is closed in $L^{2}(\Omega )$.
\item $\mathcal{W}_0$ equiped with the norm $\| \, h \, \|_{\mathcal{W}_0} = \| \, (b \cdot \nabla ) h \, \|_{L^{2}(\Omega )}$ is a Hilbert space and $(b \cdot \nabla ) \mathcal{W}_0$ 
is a closed space of $L^{2}(\Omega )$.
\item $ K^{\bot}= (b \cdot \nabla ) \mathcal{W}_0$.  
 \end{enumerate}
\end{theorem}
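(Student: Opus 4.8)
The plan is to prove the three items in order and then deduce item~3 from items~1--2 by a short orthogonality argument. Item~1 is the easiest: $K$ is the kernel in $L^2(\Omega)$ of the distributionally defined operator $\phi\mapsto\nabla\cdot(b\phi)$. If $\phi_n\in K$ and $\phi_n\to\phi$ in $L^2(\Omega)$, then for every $\chi\in\mathcal D(\Omega)$ one has $\langle\nabla\cdot(b\phi),\chi\rangle=-\int_\Omega\phi\,(b\cdot\nabla\chi)\,dx=\lim_n\big(-\int_\Omega\phi_n\,(b\cdot\nabla\chi)\,dx\big)=\lim_n\langle\nabla\cdot(b\phi_n),\chi\rangle=0$, so $\nabla\cdot(b\phi)=0$ in $\mathcal D'(\Omega)$, hence $\nabla\cdot(b\phi)\in L^2(\Omega)$ and $\phi\in K$.

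For item~2 I would first record a Poincar\'e-type inequality
\[
\|h\|_{L^2(\Omega)}\le C\,\|(b\cdot\nabla)h\|_{L^2(\Omega)},\qquad\forall\,h\in\mathcal W_0,
\]
proved by integrating $(b\cdot\nabla)h$ along the integral curves of $b$ and using the homogeneous condition $(b\cdot\nu)h=0$ at the point where each curve meets $\partial\Omega$, together with the fact that the flow of $b$ preserves Lebesgue measure since $\nabla\cdot b=0$; this is where the standing geometric and regularity hypotheses on $b$ (curves reaching $\partial\Omega$ in bounded time) are used. This inequality immediately shows that $\|\cdot\|_{\mathcal W_0}$ is a norm. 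For completeness, if $(h_n)$ is Cauchy in $\mathcal W_0$ then $(b\cdot\nabla)h_n\to g$ in $L^2(\Omega)$, the Poincar\'e inequality forces $h_n\to h$ in $L^2(\Omega)$, passing to the limit in $\int_\Omega(b\cdot\nabla h_n)\chi\,dx=-\int_\Omega h_n\,(b\cdot\nabla\chi)\,dx$ (valid for $\chi\in\mathcal D(\Omega)$ because $\nabla\cdot b=0$) gives $(b\cdot\nabla)h=g\in L^2(\Omega)$, i.e. $h\in\mathcal W$, and $(b\cdot\nu)h=0$ is preserved because the normal trace $\mathcal W\ni h\mapsto(b\cdot\nu)h$ is continuous for the graph norm; hence $h\in\mathcal W_0$ and $h_n\to h$ in $\mathcal W_0$. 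Thus $\mathcal W_0$ is a Hilbert space for $\langle h,k\rangle_{\mathcal W_0}=\int_\Omega(b\cdot\nabla h)(b\cdot\nabla k)\,dx$, and since $T\colon\mathcal W_0\to L^2(\Omega)$, $Th=(b\cdot\nabla)h$, is then a linear isometry out of a complete space, its range $(b\cdot\nabla)\mathcal W_0$ is closed in $L^2(\Omega)$.

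For item~3 I would prove the two inclusions. For $(b\cdot\nabla)\mathcal W_0\subseteq K^{\bot}$: given $h\in\mathcal W_0$ and $\phi\in K$, the Green formula $\int_\Omega(b\cdot\nabla h)\,\phi\,dx+\int_\Omega h\,\nabla\cdot(b\phi)\,dx=\langle(b\cdot\nu)\phi,h\rangle_{\partial\Omega}$ (valid for $\phi\in\mathcal V$, $h\in\mathcal W$ as a pairing of boundary traces, obtained by density of smooth functions) has vanishing second term since $\nabla\cdot(b\phi)=0$ and vanishing right-hand side since the trace of $h$ vanishes on $\{b\cdot\nu\neq0\}\cap\partial\Omega$ while $b\cdot\nu=0$ on the rest of $\partial\Omega$; hence $\int_\Omega(b\cdot\nabla h)\phi\,dx=0$. (Equivalently, one checks this first for $h\in\mathcal D(\Omega)$, where there is no boundary term, and extends by density of $\mathcal D(\Omega)$ in $\mathcal W_0$ together with $|\int_\Omega(b\cdot\nabla h)\phi\,dx|\le\|\phi\|_{L^2(\Omega)}\|h\|_{\mathcal W_0}$.) For $K^{\bot}\subseteq(b\cdot\nabla)\mathcal W_0$: since $(b\cdot\nabla)\mathcal W_0$ is closed by item~2, any $u\in K^{\bot}$ splits as $u=e+e'$ with $e\in(b\cdot\nabla)\mathcal W_0$ and $e'\in\big((b\cdot\nabla)\mathcal W_0\big)^{\bot}$; then $e\in K^{\bot}$ by the first inclusion, so $e'=u-e\in K^{\bot}$ as well, while $\int_\Omega e'(b\cdot\nabla h)\,dx=0$ for all $h\in\mathcal D(\Omega)\subset\mathcal W_0$ forces $\nabla\cdot(be')=0$ in $\mathcal D'(\Omega)$, hence $e'\in K$; therefore $e'\in K\cap K^{\bot}=\{0\}$ and $u=e\in(b\cdot\nabla)\mathcal W_0$.

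The soft functional analysis (items~1 and~3, and the isometry/closed-range step of item~2) is routine; the substance lies in the Poincar\'e estimate on $\mathcal W_0$ and in the trace theory for $\mathcal V$ and $\mathcal W$ underpinning the Green formula, since these are the places that genuinely exploit the structure of $b$ ($\nabla\cdot b=0$, regularity, and the assumption that the integral curves reach the boundary). I expect the characteristics-based Poincar\'e inequality --- equivalently, the injectivity of $(b\cdot\nabla)$ on $\mathcal W_0$ --- to be the main obstacle; establishing density of $\mathcal D(\Omega)$ in $\big(\mathcal W_0,\|(b\cdot\nabla)\cdot\|_{L^2(\Omega)}\big)$, which would be an alternative to invoking the full Green formula, is of comparable difficulty.
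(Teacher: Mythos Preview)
Your proof is correct and rests on the same analytical core as the paper (distributional continuity for item~1, a Poincar\'e inequality along the characteristics of $b$ for the substantive step), but the logical architecture differs. For the hard inclusion $K^{\bot}\subseteq(b\cdot\nabla)\mathcal W_0$ in item~3 you argue abstractly: having already established in item~2 that $(b\cdot\nabla)\mathcal W_0$ is closed (Poincar\'e $\Rightarrow$ isometry $\Rightarrow$ closed range), you split any $u\in K^{\bot}$ over $(b\cdot\nabla)\mathcal W_0$ and its orthogonal complement and kill the remainder by testing against $\mathcal D(\Omega)$. The paper goes the other way round: it proves item~3 first, constructively, by taking an explicit primitive of $\psi\in K^{\bot}$ along the $b$-field lines with zero data on the inflow boundary $\partial\Omega_-$, and then uses the orthogonality against $\theta\in K$ together with a boundary Green identity at conjugate points to force the primitive to vanish on $\partial\Omega_+$ as well; closedness of $(b\cdot\nabla)\mathcal W_0$ in item~2 is then deduced \emph{a posteriori} from $K^{\bot}=(b\cdot\nabla)\mathcal W_0$, since orthogonal complements are always closed. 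Your route is cleaner functional analysis and sidesteps the explicit conjugate-point pairing; the paper's route is more constructive and makes the geometric hypothesis on the field lines (each line meets $\partial\Omega$ at an inflow and an outflow point) visibly indispensable. One minor caveat: you invoke $\nabla\cdot b=0$ twice, but the paper only assumes $\nabla\cdot B=0$ for $B=|B|b$; this is harmless---the integration-by-parts identities acquire a lower-order $(\nabla\cdot b)h$ term and the flow Jacobian is merely bounded rather than identically one---but your Poincar\'e sketch and the distributional limit should be adjusted accordingly.
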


\noindent
\begin{proof} $1)$ Let $\phi_n \in {\mathcal V}$ such that  $\phi_n \to \phi$ in $L^2(\Omega)$. Then, $\phi_n \to \phi$ in the distributional sense and the operation $\phi \to \nabla \cdot (b \phi)$ is continuous for the topology of distributions. Therfore, $\nabla \cdot (b \phi) = 0$, which shows that $\phi \in {\mathcal V}$. 

\medskip
\noindent
$2)$ $\mathcal{W}_0$ is a Hilbert space for the norm $\| \, h \, \| = 
\| \, h \, \|_{L^{2}(\Omega )} + \| \, (b\cdot \nabla ) h \, \|_{L^{2}(\Omega )} $. According to 
the Poincar\'e inequality, the norms $\| \hspace*{2mm} \| $ and $\| \hspace*{2mm} \|_{\mathcal{W}_0}$ are 
equivalent. The closedness of $\mathcal{W}_0$ for the $L^2$ topology follows from $3)$. 

\medskip
\noindent 
$3)$ The inclusion $   (b \cdot \nabla ) \mathcal{W}_0   \subseteq {K}^{\bot}$ is obvious. 
We sketch the proof of the converse inclusion and leave the details to the reader. We make the hypothesis that all $b$-field lines are either tangent to a non-zero measure set of $\partial \Omega$ or intersect $\partial \Omega$ at two points $x_-$ and $x_+$ such that $\pm (b \cdot \nu)(x_\pm) >0$. The points $x_-$ and $x_+$ are called the conjugate points of the $b$-field line and are respectively the incoming and outgoing points of this field line to the domain. These assumptions can certainly be weekened at the expense of technical difficulties which are outside the scope of this paper. Let $\psi \in K^\bot$. By taking the primitive of $\psi$ along the $b$-field lines, there exists $\phi \in {\mathcal W}$ such that $\psi = (b \cdot \nabla) \phi$. We can additionally impose that $\phi = 0$ on $\partial \Omega_-$ where  $\partial \Omega_\pm= \{ x \in \partial \Omega \, | \,  \pm (b \cdot \nu)(x) >0 \}$. Let $\theta \in K$. We have 
\begin{equation}  0 = \int_{\Omega} \psi \, \theta \, dx = \int_{\Omega} (b \cdot \nabla) \phi \, \theta \, dx = \int_{\partial \Omega} (b \cdot \nu) \phi \, \theta \, dS(x) , 
\label{eq:conjugate}
\end{equation}
where $dS(x)$ is the superficial measure on $\partial \Omega$. Since, $\theta \in K$ its values at conjugate points are related by a linear relation. In particular, they can be taken simultaneously non-zero. Then, since the values of $\phi$ on $\partial \Omega_-$ vanish, (\ref{eq:conjugate}) implies that the values of $\phi$ on $\partial \Omega_+$ vanish as well. Consequently, $(b \cdot \nu) \phi = 0$ on $\partial \Omega$, which shows that $\phi \in {\mathcal W}_0$. This proves the result. 
\end{proof}

\medskip
\noindent 
Therefore, we can decompose $\phi^{\varepsilon}$ uniquely as
\begin{eqnarray} \label{dec}
 \phi^{\varepsilon} = p^{\varepsilon} + q^{\varepsilon}, \quad p^{\varepsilon} \in K , \quad q^{\varepsilon} \in K^\bot,
\end{eqnarray}
and state problem (\ref{P1}), (\ref{bc1}) as
\begin{eqnarray}
\label{P2} &  & -\left(  b \cdot \nabla \right) \left( \nabla \cdot (b \, q^{\varepsilon} ) \right) + \varepsilon 
(p^{\varepsilon} + q^{\varepsilon} ) = f ^{\varepsilon} , \hspace*{2
  mm} \mbox{in} \hspace*{2 mm} \Omega ,
\\ \label{bc2} & & \left( b \cdot \nu \right) \nabla \cdot \left( b \,
  q^{\varepsilon } \right) = 0, \hspace*{2
  mm} \mbox{in} \hspace*{2 mm} \partial \Omega ,
\\  &  &  p^{\varepsilon} \in K \; \text{and } \;  q^{\varepsilon} \in K^{\bot}.
\end{eqnarray}
Next, we introduce the variational approach. We multiply (\ref{P2}) by a test function 
$\psi \in \mathcal{V}$, and integrate it on $\Omega$. Using a Green formula together with the 
boundary condition (\ref{bc2}), we find that  
\begin{eqnarray} \label{var1}
\int_{\Omega}  \nabla \cdot \left( b \, q^{\varepsilon} \right) \, \nabla \cdot \left( b \, \psi \right) dx
+ \varepsilon \int_{\Omega}
(p^{\varepsilon} + q^{\varepsilon} ) \psi dx = \int_{\Omega }  f^{\varepsilon} \psi  dx.
\end{eqnarray}
The aim now is to decompose problem (\ref{var1}) into a problem for $p^{\varepsilon}$ 
and a problem for $q^{\varepsilon}$. Hence in the following two subsections the test
function $\psi$ is chosen successively in $K$ and in $K^\bot.$

%%%%%%%%%%%%%%%%%%%%%%%%%%%%%%%%%%%%%%%%%%%%%%%%%%%%%%%%%%%%%%%%%%
\subsubsection{Equation for $p^{\varepsilon} \in K  $} 
\label{pe}

Chosing $\psi = r \in K$ in (\ref{var1}), we obtain the problem
\begin{eqnarray} 
& & \hspace{-1cm} \mbox{Find } p^\varepsilon \in K \mbox{ such that } \, \, 
\int_{\Omega} \left( \varepsilon p ^{\varepsilon} - f^{\varepsilon} \right) r \, dx =0, \,  \forall r \in K . \label{p}
\end{eqnarray} 
This problem admits a solution in $K$ which is uniformly bounded in $L^2(\Omega)$ as $\varepsilon \to 0$ under the compatibility condition
\begin{equation} \label{comp}
\lim_{\varepsilon \to 0} \left( \frac{1}{\varepsilon} \int_{\Omega} f^\varepsilon \, r \, dx \right) \mbox{ exists and is finite}, \quad \forall r \in K. 
 \end{equation}
Assuming  that $f^\varepsilon$ has the following decomposition
 \begin{eqnarray*}
 f^{\varepsilon} = f^{(0)} + \varepsilon f^{(1)} + o(\varepsilon) \,.
% \label{eq:f_decom}
\end{eqnarray*} 
in $L^2(\Omega)$, this condition implies that $f^{(0)} \in K^{\bot}$. 
Next, since $\varepsilon p^{\varepsilon} - f^{\varepsilon} \in K^\bot$, according to 
Theorem \ref{fondamental} there exists $g^{\varepsilon} \in \mathcal{W}_0$ 
 such that 
\begin{eqnarray} \label{rel}
  \varepsilon p^{\varepsilon} - f^{\varepsilon} = (b \cdot \nabla ) g^{\varepsilon }.   
  \end{eqnarray}
Taking the product with $b$ and the divergence of the result, we
obtain the following 

\begin{proposition}
$p^\varepsilon$ is given by 
\begin{eqnarray}
& & \hspace{-1cm} p^{\varepsilon} = \frac{1}{\varepsilon} \left( f^{\varepsilon} + b
  \cdot \nabla g^{\varepsilon}  \right) \hspace*{3 mm} \mbox{in} \hspace*{3 mm} \Omega. \label{bfp}
\end{eqnarray}
where $g^{\varepsilon}$ satisfies the problem: 
\begin{eqnarray}
& & \hspace{-1cm} - \nabla \cdot \left( (b \otimes b) \nabla g^{\varepsilon} \right) = 
\nabla \cdot \left( f^{\varepsilon} b    \right)  \quad \mbox{in } \Omega,  \label{fp} \\
& & \hspace{-1cm} (b \cdot \nu) g^{\varepsilon} = 0 \quad \mbox{on } \partial \Omega,   \label{fp_bc}
\end{eqnarray}  
or, in variational form 
\begin{eqnarray} 
& & \hspace{-1cm} \mbox{Find } g^\varepsilon \in \mathcal{W}_0 \mbox{ such that } \nonumber \\
& & \hspace{-1cm} \int_{\Omega} (b \cdot \nabla g^\varepsilon) (b \cdot \nabla \theta) = \int_{\Omega} f^{\varepsilon} b \cdot \nabla \theta  \, dx , \,  \forall \theta \in \mathcal{W}_0 . \label{p_variational}
\end{eqnarray} 
\label{prop_peps}
\end{proposition}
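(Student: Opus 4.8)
The plan is to unwind the formal computation already sketched before the statement of Proposition \ref{prop_peps}, and then verify the equivalence of the three formulations. First I would start from relation \eqref{rel}: since $\varepsilon p^\varepsilon - f^\varepsilon \in K^\bot$, Theorem \ref{fondamental}(3) gives $g^\varepsilon \in \mathcal{W}_0$ with $\varepsilon p^\varepsilon - f^\varepsilon = (b\cdot\nabla) g^\varepsilon$. Solving for $p^\varepsilon$ immediately yields \eqref{bfp}. To characterize $g^\varepsilon$, I would use that $p^\varepsilon \in K$ means $\nabla\cdot(b\, p^\varepsilon) = 0$ in $\Omega$ together with the boundary condition $(b\cdot\nu)p^\varepsilon = 0$ — but careful here: $p^\varepsilon \in K$ only encodes $\nabla\cdot(b\,p^\varepsilon)=0$; the boundary information in fact comes from $g^\varepsilon \in \mathcal{W}_0$, i.e. $(b\cdot\nu)g^\varepsilon = 0$, which is \eqref{fp_bc}. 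Applying $\nabla\cdot(b\,\cdot)$ to \eqref{rel}, the left side gives $\varepsilon\,\nabla\cdot(b\,p^\varepsilon) - \nabla\cdot(f^\varepsilon b) = -\nabla\cdot(f^\varepsilon b)$ since $p^\varepsilon \in K$, and the right side gives $\nabla\cdot\big((b\otimes b)\nabla g^\varepsilon\big)$; rearranging gives \eqref{fp}.

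Next I would turn to the variational formulation \eqref{p_variational}. The direct route is: multiply \eqref{fp} by a test function $\theta \in \mathcal{W}_0$, integrate over $\Omega$, and apply the Green formula $\int_\Omega \nabla\cdot(\mathbf{F})\,\theta\,dx = -\int_\Omega \mathbf{F}\cdot\nabla\theta\,dx + \int_{\partial\Omega}(\mathbf{F}\cdot\nu)\theta\,dS$ to both sides, with $\mathbf{F} = (b\otimes b)\nabla g^\varepsilon$ on the left (so $\mathbf{F}\cdot\nu = (b\cdot\nu)(b\cdot\nabla g^\varepsilon)$) and $\mathbf{F} = f^\varepsilon b$ on the right (so $\mathbf{F}\cdot\nu = (b\cdot\nu)f^\varepsilon$). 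The boundary term on the left vanishes because $\theta \in \mathcal{W}_0$ forces $(b\cdot\nu)\theta = 0$; the boundary term on the right vanishes for the same reason. What remains is exactly $\int_\Omega (b\cdot\nabla g^\varepsilon)(b\cdot\nabla\theta)\,dx = \int_\Omega f^\varepsilon\, b\cdot\nabla\theta\,dx$, which is \eqref{p_variational}. I would also note the converse: well-posedness of \eqref{p_variational} follows from Lax--Milgram on the Hilbert space $\mathcal{W}_0$, using part (2) of Theorem \ref{fondamental} (the norm $\|(b\cdot\nabla)\cdot\|_{L^2}$ is equivalent to the full graph norm via Poincar\'e), coercivity of the bilinear form being immediate, and continuity of the right-hand side following from $f^\varepsilon \in L^2(\Omega)$.

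The one point requiring genuine care — and what I expect to be the main obstacle — is the interplay of the regularity/boundary conditions when passing between strong and weak forms, i.e. justifying the Green formulas. The functions $g^\varepsilon \in \mathcal{W}_0$ have only $(b\cdot\nabla)g^\varepsilon \in L^2$, not full $H^1$ regularity, so the trace $(b\cdot\nu)g^\varepsilon$ and the integration by parts along $b$-field lines must be understood in the generalized sense underlying the definition of $\mathcal{W}_0$ (the same anisotropic trace theory invoked in the proof of Theorem \ref{fondamental}). Likewise, $\nabla\cdot\big((b\otimes b)\nabla g^\varepsilon\big)$ should be read as $b\cdot\nabla\big((b\cdot\nabla)g^\varepsilon\big)$, which is well defined as a distribution precisely because $(b\cdot\nabla)g^\varepsilon \in L^2$; the strong equation \eqref{fp} then holds in $\mathcal{D}'(\Omega)$. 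I would therefore phrase the proof so that \eqref{p_variational} is the primary (rigorously justified) object, obtained directly by inserting $\psi = b\cdot\nabla\theta \in K^\bot$ (for $\theta \in \mathcal{W}_0$) into \eqref{var1} and using that the $p^\varepsilon$ term drops by orthogonality while the $q^\varepsilon$ term drops because choosing $r \in K$ already separated the equations — then present \eqref{bfp}, \eqref{fp}, \eqref{fp_bc} as the formal strong counterpart, valid in the distributional sense. The remaining verifications (the two Green formulas, the vanishing of boundary terms, Lax--Milgram) are then routine and I would leave their details to the reader.
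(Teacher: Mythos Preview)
Your proposal is correct and follows essentially the same route as the paper, which gives only a one-line sketch (``Taking the product with $b$ and the divergence of the result''); you have fleshed this out carefully, including the Green formula, the Lax--Milgram argument, and the trace/regularity caveats, none of which the paper spells out. One small slip: in your final paragraph you propose obtaining \eqref{p_variational} by inserting $\psi = b\cdot\nabla\theta \in K^\bot$ into \eqref{var1}, but that choice produces the $q^\varepsilon$ equation \eqref{varq}, not the $g^\varepsilon$ equation; the direct variational route to \eqref{p_variational} is rather to test the identity $\varepsilon p^\varepsilon = f^\varepsilon + (b\cdot\nabla)g^\varepsilon$ against $b\cdot\nabla\theta$ and use $p^\varepsilon \perp K^\bot$ --- which is effectively what you do earlier anyway.
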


%%%%%%%%%%%%%%%%%%%%%%%%%%%%%%%%%%%%%%%%%%%%%%%%%%%%%%%%%%%%%%%%%%
\subsubsection{Equation for $q^{\varepsilon}\in K ^{\bot}$}
\label{subsubsec_qe}

Taking $\psi = s \in K^{\bot}$ in (\ref{var1}) gives:
\begin{eqnarray} \label{varq}
\int_{\Omega} {\nabla } \cdot ( b q^{\varepsilon} ) \nabla  \cdot (bs ) \,
dx
+ \varepsilon \int_{\Omega}q^{\varepsilon}s \, dx 
= \int_{\Omega} f^{\varepsilon} s \, dx. 
\end{eqnarray}
But since $q^{\varepsilon}$ and $s \in K^{\bot}$, theorem 
\ref{fondamental} implies that there exists $h^{\varepsilon}$ and $\theta \in \mathcal{W}_0$ 
such that $q^{\varepsilon} = b \cdot \nabla h^{\varepsilon}$ and $s = b \cdot \nabla \theta$. Therefore, we get the following

\begin{proposition}
$q^{\varepsilon}$ is given by: 
\begin{eqnarray} 
& & \hspace{-1cm} q^{\varepsilon} = b \cdot { \nabla } h^{\varepsilon}, 
\label{hfi}
\end{eqnarray}
where $h^{\varepsilon}$ satisfies the following fourth-order problem: 
\begin{eqnarray} \label{bi1}
  -  \nabla \cdot \left[ ( b \otimes b ) \nabla ( \nabla \cdot (b \otimes b) \nabla h^\varepsilon ) \right] 
+ \varepsilon  \nabla \cdot \left( (b \otimes b  ) { \nabla } h^{\varepsilon} \right) =
{  \nabla } \cdot (b f^{\varepsilon} ) , 
\hspace*{1 mm} \mbox{in} \hspace*{1 mm} \Omega , \hspace*{5 mm}
\\  \label{bi2}  (b \cdot \nu )  { \nabla } \cdot \left( (b \otimes b )
  \nabla h^{\varepsilon} \right)  = 0 , \; 
\mbox{on} \;   \partial \Omega , \;\hspace*{56 mm}
\\   \label{bi3} (b \cdot \nu ) h^{\varepsilon} = 0 , \; \mbox{on} \;   \partial
\Omega ,  \;\hspace*{79 mm}
\end{eqnarray} 
or, in variational form 
\begin{eqnarray} 
& & \hspace{-1cm} \mbox{Find } h^\varepsilon \in \mathcal{W}_0 \mbox{ such that } \nonumber \\
& & \hspace{-1cm} 
\int_{\Omega} \nabla \cdot \left( (b \otimes b)  \nabla  h^{\varepsilon}
\right) \,  
{  \nabla } \cdot \left( (b \otimes b) { \nabla } \theta  \right) dx +
\varepsilon \int_{\Omega} (b \cdot {  \nabla }  h^{\varepsilon}) 
\, (b\cdot { \nabla } \theta ) \, dx = \nonumber \\
& & \hspace{7cm} 
= \int_{\Omega} f^{\varepsilon} \, 
(b \cdot {  \nabla }  \theta )\, dx , 
\label{eq:qeps_variational}
\end{eqnarray} 
\label{prop_qeps}
\end{proposition}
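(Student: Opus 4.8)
The plan is to follow the same pattern as for $p^\varepsilon$ (Proposition~\ref{prop_peps}): reparametrise the unknown and the test functions by potentials living in $\mathcal{W}_0$, rewrite the variational identity (\ref{varq}) in terms of these potentials, establish well-posedness by Lax--Milgram, and finally recover the strong form by two integrations by parts, reading off the boundary conditions from the test space.

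First I would invoke point~3 of Theorem~\ref{fondamental}: since $q^\varepsilon\in K^\bot=(b\cdot\nabla)\mathcal{W}_0$, there is a unique $h^\varepsilon\in\mathcal{W}_0$ with $q^\varepsilon=b\cdot\nabla h^\varepsilon$ (uniqueness because point~2 shows $\|\cdot\|_{\mathcal{W}_0}=\|(b\cdot\nabla)\cdot\|_{L^2}$ is a norm, so $b\cdot\nabla$ is injective on $\mathcal{W}_0$). The same remark applied to the test functions shows that $s$ ranges over all of $K^\bot$ precisely when $s=b\cdot\nabla\theta$ with $\theta$ ranging over all of $\mathcal{W}_0$. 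Using the pointwise identity $b\,(b\cdot\nabla\varphi)=(b\otimes b)\nabla\varphi$, so that $b q^\varepsilon=(b\otimes b)\nabla h^\varepsilon$ and $b s=(b\otimes b)\nabla\theta$, substitution into (\ref{varq}) turns it termwise into (\ref{eq:qeps_variational}). Conversely any $h^\varepsilon\in\mathcal{W}_0$ solving (\ref{eq:qeps_variational}) gives back $q^\varepsilon=b\cdot\nabla h^\varepsilon\in K^\bot$ solving (\ref{varq}), so (\ref{hfi})--(\ref{eq:qeps_variational}) is equivalent to the original problem for $q^\varepsilon$.

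Next I would prove existence and uniqueness of $h^\varepsilon$ via Lax--Milgram on the space $\mathcal{X}=\{h\in\mathcal{W}_0:\nabla\cdot((b\otimes b)\nabla h)\in L^2(\Omega)\}$ with $\|h\|_{\mathcal{X}}=\|\nabla\cdot((b\otimes b)\nabla h)\|_{L^2(\Omega)}$; this is a genuine norm because $\nabla\cdot((b\otimes b)\nabla h)=0$ means $b\cdot\nabla h\in K$, while always $b\cdot\nabla h\in K^\bot$, so $b\cdot\nabla h=0$ and hence $h=0$. The bilinear form in (\ref{eq:qeps_variational}) is then coercive — its leading term equals $\|h\|_{\mathcal{X}}^2$ — and the linear form $\theta\mapsto\int_\Omega f^\varepsilon(b\cdot\nabla\theta)\,dx$ is bounded; the point requiring work is continuity of the $\varepsilon$-term, which needs the Poincar\'e-type bound $\|b\cdot\nabla h\|_{L^2}\le C\|\nabla\cdot((b\otimes b)\nabla h)\|_{L^2}$, i.e. that $b\cdot\nabla$ is bounded below on $K^\bot$; this follows from the closed-range statements collected in Theorem~\ref{fondamental}.

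Finally, to pass to the strong form I set $u^\varepsilon=\nabla\cdot((b\otimes b)\nabla h^\varepsilon)$ and integrate the leading term of (\ref{eq:qeps_variational}) by parts twice and the $\varepsilon$-term and the right-hand side once (Green's formula, with $(b\otimes b)\nabla\varphi\cdot\nu=(b\cdot\nu)(b\cdot\nabla\varphi)$). Every boundary contribution carrying a factor $\theta$ is multiplied by $(b\cdot\nu)\theta$, which vanishes because $\theta\in\mathcal{W}_0$; only one boundary term survives, giving
\[
\int_\Omega\bigl[\nabla\cdot((b\otimes b)\nabla u^\varepsilon)-\varepsilon u^\varepsilon+\nabla\cdot(b f^\varepsilon)\bigr]\theta\,dx+\int_{\partial\Omega}(b\cdot\nu)\,u^\varepsilon\,(b\cdot\nabla\theta)\,dS=0,\qquad\forall\,\theta\in\mathcal{X}.
\]
Taking $\theta\in C_c^\infty(\Omega)$ first yields the interior equation (\ref{bi1}); the remaining boundary integral must then vanish for all $\theta\in\mathcal{X}$, and since the normal traces $b\cdot\nabla\theta$ are essentially free on $\partial\Omega_\pm$ this forces $(b\cdot\nu)u^\varepsilon=0$, i.e. (\ref{bi2}); (\ref{bi3}) is just $h^\varepsilon\in\mathcal{W}_0$. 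The hard part will be the two functional-analytic inputs — identifying $\mathcal{X}$ and establishing the Poincar\'e bound that makes Lax--Milgram applicable — together with the careful bookkeeping of boundary terms in the double integration by parts, which has to yield exactly (\ref{bi2})--(\ref{bi3}) and no additional condition; both rely on the geometric hypotheses on the $b$-field lines used in Theorem~\ref{fondamental}.
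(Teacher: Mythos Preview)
Your approach is correct and its first step coincides with the paper's own argument: the paper simply observes that $q^\varepsilon,s\in K^\bot=(b\cdot\nabla)\mathcal{W}_0$ by Theorem~\ref{fondamental}, writes $q^\varepsilon=b\cdot\nabla h^\varepsilon$ and $s=b\cdot\nabla\theta$ with $h^\varepsilon,\theta\in\mathcal{W}_0$, and substitutes into (\ref{varq}) to obtain (\ref{eq:qeps_variational}); the strong form (\ref{bi1})--(\ref{bi3}) is then stated without a detailed derivation. That is the entirety of the paper's justification --- it is explicitly formal (cf.\ Proposition~\ref{sumhom}, ``formally equivalent'').

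Your proposal goes substantially further than the paper in two respects. First, you set up a genuine Lax--Milgram argument on the space $\mathcal{X}=\{h\in\mathcal{W}_0:\nabla\cdot((b\otimes b)\nabla h)\in L^2\}$, including the Poincar\'e-type bound needed for continuity of the $\varepsilon$-term; the paper does not address well-posedness of (\ref{eq:qeps_variational}) at all. Second, you carry out the double integration by parts to recover (\ref{bi1})--(\ref{bi2}) from the variational form, tracking which boundary terms vanish because $\theta\in\mathcal{W}_0$ and which survive to give the natural condition (\ref{bi2}); again the paper omits this. Both additions are correct and make the proposition rigorous rather than formal, at the cost of invoking the closed-range and trace properties that the paper only sketches in Theorem~\ref{fondamental}.
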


\medskip
\noindent
The resolution of problem (\ref{P1}), (\ref{bc1}) can be summarized in the following proposition. 

\begin{proposition} 
\label{sumhom}
If $f^{\varepsilon}$ satisfies
(\ref{comp}), problem (\ref{P1}), (\ref{bc1}) is formally equivalent to the two
problems (\ref{bfp}), (\ref{fp}), (\ref{fp_bc}) on the one hand and (\ref{hfi}), (\ref{bi1}), (\ref{bi2}), (\ref{bi3}).
\end{proposition}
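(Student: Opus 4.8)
The plan is to use the $L^2(\Omega)$-orthogonal decomposition $\phi^{\varepsilon}=p^{\varepsilon}+q^{\varepsilon}$ of \eqref{dec} — legitimate because $K$ is closed in $L^2(\Omega)$ by point~1 of Theorem~\ref{fondamental} — and to show that the single variational identity \eqref{var1} splits into the two identities obtained by restricting the test function $\psi$ to $K$ and to $K^{\bot}$. First I would recall that \eqref{P1}--\eqref{bc1} is, formally, equivalent to \eqref{var1}: multiplying \eqref{P2} by $\psi\in\mathcal V$, integrating, and using Green's formula with the boundary condition \eqref{bc2} gives \eqref{var1}; conversely, test functions with compact support recover \eqref{P2} in the distributional sense, and a second integration by parts isolates the boundary term and yields \eqref{bc2}. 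Since any $\phi\in\mathcal V$ splits as $P_K\phi+(\mathrm{Id}-P_K)\phi$ with both summands still in $\mathcal V$, testing \eqref{var1} against all $\psi\in\mathcal V$ is the same as testing against all $r\in K$ and all $s\in K^{\bot}$. For $\psi=r\in K$ one has $\nabla\cdot(br)=0$, so the fourth-order term drops and $\int_\Omega q^{\varepsilon}r\,dx=0$ by orthogonality, leaving exactly \eqref{p}; for $\psi=s\in K^{\bot}$ the term $\int_\Omega p^{\varepsilon}s\,dx$ vanishes, leaving \eqref{varq}. The compatibility condition \eqref{comp} enters only to guarantee solvability and a bound on $p^{\varepsilon}$ uniform in $\varepsilon$; the equivalence itself holds for each fixed $\varepsilon>0$.

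Next I would treat the two pieces separately. From \eqref{p}, $\varepsilon p^{\varepsilon}-f^{\varepsilon}$ is orthogonal to $K$, hence lies in $K^{\bot}=(b\cdot\nabla)\mathcal W_0$ by point~3 of Theorem~\ref{fondamental}; this provides $g^{\varepsilon}\in\mathcal W_0$ with $\varepsilon p^{\varepsilon}-f^{\varepsilon}=b\cdot\nabla g^{\varepsilon}$, i.e. \eqref{bfp}. Multiplying this relation by $b$, taking the divergence, and using $\nabla\cdot(b\,p^{\varepsilon})=0$ (because $p^{\varepsilon}\in K$) yields \eqref{fp}; the boundary condition \eqref{fp_bc} is exactly the membership $g^{\varepsilon}\in\mathcal W_0$; and \eqref{p_variational} is the variational form of \eqref{fp}--\eqref{fp_bc} on $\mathcal W_0$, again via Green's formula. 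Similarly, from \eqref{varq}, $q^{\varepsilon}\in K^{\bot}$ gives $h^{\varepsilon}\in\mathcal W_0$ with $q^{\varepsilon}=b\cdot\nabla h^{\varepsilon}$, which is \eqref{hfi}; writing each test function as $s=b\cdot\nabla\theta$ with $\theta\in\mathcal W_0$ and using $\nabla\cdot(bq^{\varepsilon})=\nabla\cdot((b\otimes b)\nabla h^{\varepsilon})$ (and likewise for $s$), \eqref{varq} becomes \eqref{eq:qeps_variational}; two further integrations by parts in \eqref{eq:qeps_variational}, peeling off the boundary contributions, recover the strong fourth-order system \eqref{bi1} with boundary conditions \eqref{bi2} (the Green boundary term) and \eqref{bi3} (membership $h^{\varepsilon}\in\mathcal W_0$). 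For the converse implication one defines $p^{\varepsilon}$ by \eqref{bfp} and $q^{\varepsilon}$ by \eqref{hfi} from given solutions $g^{\varepsilon},h^{\varepsilon}$, checks $p^{\varepsilon}\in K$ using \eqref{fp} and $q^{\varepsilon}\in K^{\bot}$ using Theorem~\ref{fondamental}, and adds the two variational identities to recover \eqref{var1}, hence \eqref{P1}--\eqref{bc1}.

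The main obstacle is not algebraic but a matter of what is being claimed: the genuine, fully rigorous equivalence holds at the level of the variational formulations \eqref{p_variational} and \eqref{eq:qeps_variational}, whereas passing to the strong forms \eqref{fp}--\eqref{fp_bc} and \eqref{bi1}--\eqref{bi3} — especially reading off the boundary conditions from the boundary terms of the successive Green formulas — requires regularity of $g^{\varepsilon}$, $h^{\varepsilon}$, and of the field $b$ that is not implied by membership in $\mathcal V$ or $\mathcal W_0$ alone. This is precisely why the statement is qualified as \emph{formally} equivalent, and in the write-up I would present the integrations by parts explicitly as formal manipulations, making clear that the only facts genuinely invoked are the closedness of $K$ in $L^2(\Omega)$ and the identification $K^{\bot}=(b\cdot\nabla)\mathcal W_0$ supplied by Theorem~\ref{fondamental}.
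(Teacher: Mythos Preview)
Your proposal is correct and follows essentially the same route as the paper: the proposition is not proved separately but is stated as a summary of the derivation carried out in the preceding subsections, namely the orthogonal decomposition \eqref{dec} justified by Theorem~\ref{fondamental}, the variational identity \eqref{var1}, and the restriction of the test function first to $K$ (yielding \eqref{p} and then Proposition~\ref{prop_peps}) and then to $K^{\bot}$ (yielding \eqref{varq} and Proposition~\ref{prop_qeps}). Your additional remarks on the converse direction and on the ``formal'' qualifier are appropriate elaborations that the paper leaves implicit.
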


\begin{remark}
In \cite{DDLNN_sub}, the characterization of $K^\bot$ as $(b \cdot \nabla ) \mathcal{W}_0$ is not used. Instead, the constraint that $q \in K^\bot$ is taken into account through a mixed formulation. The number of unknowns and the size of the problem are therefore larger in \cite{DDLNN_sub} than in the present work. In practice, the resolution of the fourth order problem (\ref{bi1}), (\ref{bi2}), (\ref{bi3}) can be reduced by solving two second-order problem, as shown below. Therefore, the introduction of a fourth order problem does not bring specific difficulties.  
\label{rem:compar_liter}
\end{remark}

%%%%%%%%%%%%%%%%%%%%%%%%%%%%%%%%%%%%%%%%%%%%%%%%%%%%%%%%%%%%%%%%%%
\subsubsection{Extension to non-homogeneous Neumann boundary conditions}
\label{bnh}

The application targeted in this paper, and detailed in
section~\ref{DA2},  requires the handling of non-homogeneous Neumann boundary conditions. 
In this subsection $\phi^{\varepsilon}$ is solution to the following 
inhomogeneous Neuman problem: 
\begin{eqnarray} \label{sysr}
\varepsilon \phi^{\varepsilon} - 
 (b \cdot \nabla ) (\nabla \cdot (b \, \phi^{\varepsilon})) 
&=& b \cdot \nabla \kappa + f^\varepsilon_2, \hspace*{3 mm} \mbox{on} \hspace*{3 mm} \Omega,
\\ \label{bc} (b \cdot \nu ) \nabla \cdot (b \, \phi^{\varepsilon}) &=& -(b \cdot
\nu ) \kappa, \hspace*{3 mm} \mbox{on} \hspace*{3 mm} \partial \Omega.
\end{eqnarray}
where $\kappa$ is a given function in ${\mathcal W}$. We denote by $f_1 = b \cdot \nabla \kappa$ and by $f^\varepsilon = f_1 + f^\varepsilon_2$. 

Using the same decomposition (\ref{dec}) as before, we find that $p^\varepsilon$ satisfies (\ref{bfp}) and $g^\varepsilon$ is the solution of (\ref{fp}), (\ref{fp_bc}) or (\ref{p_variational})
with $f^\varepsilon$ replaced by $f^\varepsilon_2$ (and satisfying (\ref{comp})). Similarly, $q^\varepsilon$ satisfies (\ref{hfi}) and $h^\varepsilon$ is the solution of (\ref{bi1}), (\ref{bi2}), (\ref{bi3}), or of (\ref{eq:qeps_variational}) with '0' at the right-hand side of (\ref{bi2}) replaced by $(b \cdot \nu) \kappa$, the other terms being unchanged. The details are left to the reader.

%%%%%%%%%%%%%%%%%%%%%%%%%%%%%%%%%%%%%%%%%%%%%%%%%%%%%%%%%%%%%%%%%%
%%%%%%%%%%%%%%%%%%%%%%%%%%%%%%%%%%%%%%%%%%%%%%%%%%%%%%%%%%%%%%%%%%
%%%%%%%%%%%%%%%%%%%%%%%%%%%%%%%%%%%%%%%%%%%%%%%%%%%%%%%%%%%%%%%%%%
%%%%%%%%%%%%%%%%%%%%%%%%%%%%%%%%%%%%%%%%%%%%%%%%%%%%%%%%%%%%%%%%%%
\setcounter{equation}{0}
\section{Space discretization}
\label{num}

The problem is discretized using a finite volume method.
The domain is decomposed into a familly $\mathcal{R}$ of rectangles $M_{i-{1}/{2}, j-{1}/{2}} = ]x_{i-1},x_i[  \times ]y_{j-1},y_j[$ with $x_i = i \Delta x$  and $y_j =j \Delta
y$. We look for a piecewise constant approximation $p_{\mathcal{R}}^{\varepsilon}$ of $p^{\varepsilon}$ on each
$M_{i-{1}/{2}, j-{1}/{2}}$ and denote by $p_{i-{1}/{2},j-{1}/{2}}$ its constant value on this rectangle. The function $g^{\varepsilon}$ is approximated by a constant function on a dual mesh $\mathcal{D}$, 
consisting of rectangles $ \mathcal{D}_{i,j}= ]  x_{i-{1}/{2}} , x_{i+{1}/{2}} [  \times ]  y_{j- {1}/{2}} , y_{j+{1}/{2}} [ $
where $x_{i-{1}/{2 }} = (i - {1}/{2}) \Delta x$, $y_{i-{1}/{2 }} = (i - {1}/{2}) \Delta y$. Then $g^{\varepsilon}$ is approximated by a piecewise constant function $g_{\mathcal{D}}^{\varepsilon}$ with its constant values denoted by  $g_{i,j}^{\varepsilon}$. We approximate (\ref{bfp}) by
\begin{eqnarray*}
p_{i-\frac{1}{2}, j-\frac{1}{2}} =  \frac{1}{\varepsilon} \left(
  f^{\varepsilon}(x_{i-\frac{1}{2}} , y_{j -\frac{1}{2}}) +
  b(x_{i-\frac{1}{2}} , y_{j -\frac{1}{2}}) \cdot 
( \nabla g^{\varepsilon})_{i - \frac{1}{2},j-\frac{1}{2}} 
 \right).
\end{eqnarray*}

We now define approximations $\left(  b \cdot \nabla  \right)_{\mbox{\scriptsize{app}}}$ and 
$\nabla \cdot ( \hspace*{2 mm} \cdot \hspace*{2 mm} b )_{\mbox{\scriptsize{app}}}$ of operators 
$ \Psi \mapsto ( b \cdot \nabla \Psi  ) $ and $\Phi \mapsto 
\nabla \cdot ( b \, \Phi    ) $ such that they are 
discrete dual operators to each other. For this purpose, we define $L_{\mathcal R}$ and $L_{\mathcal D}$ the space of piecewise constant functions on meshes of types ${\mathcal R}$ and ${\mathcal D}$ respectively.

\begin{definition} 
The operator $\left(  b \cdot \nabla  \right)_{\mbox{\scriptsize{app}}}$: $L_{\mathcal{D}} \rightarrow
L_{\mathcal{R}}$  is defined by
\begin{equation}
  \begin{split}
    (\left(   b \cdot \nabla \Psi
    \right)_{\mbox{\scriptsize{app}}})_{i-\frac{1}{2}, j-\frac{1}{2}} = \hspace*{1cm} &\\
    = b(x_{i- \frac{1}{2}} , y_{j-\frac{1}{2}}) \cdot
    \bigg(  & \Big( \frac{\Psi_{i,j}
      - \Psi_{i-1,j}}{2 \Delta x} +   \frac{\Psi_{i,j-1} 
      - \Psi_{i-1,j-1}}{2 \Delta x}  \Big) ,    
    \\ 
    &\Big( \frac{\Psi_{i,j} - \Psi_{i,j-1}}{2 \Delta y} 
    +   \frac{\Psi_{i-1,j} - \Psi_{i-1,j-1}}{2\Delta
      y}        \Big)
    \bigg).
\end{split} \label{grad}
\end{equation}

\noindent
The operator $\nabla \cdot ( \hspace*{2 mm} \cdot \hspace*{2 mm} b )_{\mbox{\scriptsize{app}}}$:  $L_{\mathcal{R}} \rightarrow
L_{\mathcal{D}}$ is defined by
\begin{equation}\label{div} 
\begin{split}
(\nabla \cdot (b \Phi)_{\mbox{\scriptsize{app}}})_{i,j} =\\
=&  \left( \frac{1}{2 \, \Delta x} b_x ( x_{i + \frac{1}{2}} , y_{j-
    \frac{1}{2}}   ) -    \frac{1}{2 \, \Delta y} b_y ( x_{i + \frac{1}{2}} , y_{j-
    \frac{1}{2}}   )  \right) \Phi_{i + \frac{1}{2} ,j-
    \frac{1}{2}}   \\
+& \left( \frac{1}{2 \, \Delta x} b_x ( x_{i + \frac{1}{2}} , y_{j+
    \frac{1}{2}}   ) +    \frac{1}{2 \, \Delta y} b_y ( x_{i + \frac{1}{2}} , y_{j+
    \frac{1}{2}}   )  \right) \Phi_{i + \frac{1}{2} ,j+
    \frac{1}{2}} \\  
- & \left( \frac{1}{2 \, \Delta x} b_x ( x_{i - \frac{1}{2}} , y_{j-
    \frac{1}{2}}   ) +    \frac{1}{2 \, \Delta y} b_y ( x_{i - \frac{1}{2}} , y_{j-
    \frac{1}{2}}   )  \right) \Phi_{i - \frac{1}{2} , j-
    \frac{1}{2}}  \\ 
 -& \left( \frac{1}{2 \, \Delta x} b_x ( x_{i - \frac{1}{2}} , y_{j+
    \frac{1}{2}}   ) -    \frac{1}{2 \, \Delta y} b_y ( x_{i - \frac{1}{2}} , y_{j+
    \frac{1}{2}}   )  \right) \Phi_{i - \frac{1}{2} ,j+
    \frac{1}{2}}   .
\end{split} 
\end{equation}
\end{definition}

\begin{proposition} \label{dual}
$ \big(b \cdot \nabla \big)_{\mbox{\scriptsize{app}}} $ and $\nabla \cdot \big( b \, \cdot
  \,\big)_{\mbox{\scriptsize{app}}} $ are adjoint operators to each other .
\end{proposition}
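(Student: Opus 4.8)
The statement asserts that $\left(b\cdot\nabla\right)_{\mathrm{app}}:L_{\mathcal D}\to L_{\mathcal R}$ and $\nabla\cdot(b\,\cdot)_{\mathrm{app}}:L_{\mathcal R}\to L_{\mathcal D}$ are adjoint for the natural $L^2$-type inner products on $L_{\mathcal R}$ and $L_{\mathcal D}$ (piecewise-constant functions weighted by the cell areas, which are all equal to $\Delta x\,\Delta y$ here, so up to this common factor the inner products are just the Euclidean dot products of the coefficient vectors). So the plan is simply to verify the identity
\begin{equation*}
\sum_{i,j}\big(\nabla\cdot(b\Phi)_{\mathrm{app}}\big)_{i,j}\,\Psi_{i,j}
\;=\;\sum_{i,j}\Phi_{i-\frac12,j-\frac12}\,\big((b\cdot\nabla\Psi)_{\mathrm{app}}\big)_{i-\frac12,j-\frac12}
\end{equation*}
for all $\Phi\in L_{\mathcal R}$, $\Psi\in L_{\mathcal D}$ (with the obvious convention that coefficients referring to cells outside the mesh are zero, consistently with the boundary treatment). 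This is a finite-dimensional linear-algebra fact: it suffices to check that the matrix of $\nabla\cdot(b\,\cdot)_{\mathrm{app}}$ is the transpose of the matrix of $(b\cdot\nabla)_{\mathrm{app}}$.

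The cleanest way to organize the verification is to expand the left-hand side and collect, for each fixed primal cell center $(x_{i-\frac12},y_{j-\frac12})$, the coefficient of $\Phi_{i-\frac12,j-\frac12}$. In the defining formula \eqref{div}, the value $\Phi_{i-\frac12,j-\frac12}$ appears in the expression for $(\nabla\cdot(b\Phi)_{\mathrm{app}})_{i,j}$ at exactly the four dual cells whose corners are this primal center, namely the dual cells indexed by $(i,j)$, $(i-1,j)$, $(i,j-1)$, $(i-1,j-1)$, each time paired with the value of $b$ evaluated at $(x_{i-\frac12},y_{j-\frac12})$. Reading off the four corresponding parenthesized weights from \eqref{div} and multiplying by the respective $\Psi$-values, one obtains
\begin{equation*}
\Phi_{i-\frac12,j-\frac12}\,\Big[
\tfrac{b_x}{2\Delta x}\big(\Psi_{i,j}-\Psi_{i-1,j}+\Psi_{i,j-1}-\Psi_{i-1,j-1}\big)
+\tfrac{b_y}{2\Delta y}\big(\Psi_{i,j}-\Psi_{i,j-1}+\Psi_{i-1,j}-\Psi_{i-1,j-1}\big)\Big],
\end{equation*}
with $b=(b_x,b_y)$ evaluated at $(x_{i-\frac12},y_{j-\frac12})$; the bracket is precisely $\big((b\cdot\nabla\Psi)_{\mathrm{app}}\big)_{i-\frac12,j-\frac12}$ as given by \eqref{grad}. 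Summing over $i,j$ gives the right-hand side, which completes the proof. One should double-check the sign pattern: in \eqref{div} the $b_x$-parts of the four weights attached to a given primal cell carry signs $+,+,-,-$ (depending on whether the primal cell sits to the left or right of the dual center), while the $b_y$-parts carry signs $-,+,-,+$ (depending on below/above), and these reproduce exactly the $x$- and $y$-difference stencils in \eqref{grad}.

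\textbf{Main obstacle.} There is no deep difficulty here; the only thing that can go wrong is bookkeeping — correctly matching up which of the four weights in \eqref{div} corresponds to which of the four neighboring $\Psi$ values in \eqref{grad}, and tracking the $+/-$ signs and the half-index shifts without error. A secondary point worth a sentence is the behavior at $\partial\Omega$: the adjointness is an exact discrete analogue of the Green formula used in \eqref{var1}, and it holds with no boundary terms precisely because the discrete operators are built so that out-of-domain degrees of freedom do not enter, which is consistent with the homogeneous Neumann condition \eqref{bc1}; for the inhomogeneous case \eqref{bc} the boundary data is carried by the right-hand side rather than by the operator, so the duality statement is unaffected.
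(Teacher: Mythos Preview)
Your approach is exactly what the paper intends: its ``proof'' is the single sentence ``Easy and left to the reader, thanks to a discrete Green formula,'' and your summation-by-parts (collect the coefficient of each $\Phi_{i-\frac12,j-\frac12}$ in $\sum_{i,j}(\nabla\cdot(b\Phi)_{\mathrm{app}})_{i,j}\Psi_{i,j}$) \emph{is} that discrete Green formula. So the strategy is correct and matches the paper.

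There is, however, a sign slip in your final identity --- precisely the bookkeeping risk you flagged. Tracking the four occurrences of $\Phi_{i-\frac12,j-\frac12}$ in \eqref{div} carefully (third line at dual cell $(i,j)$, first line at $(i-1,j)$, fourth line at $(i,j-1)$, second line at $(i-1,j-1)$), the collected coefficient is
\[
-\Big[\tfrac{b_x}{2\Delta x}\big(\Psi_{i,j}-\Psi_{i-1,j}+\Psi_{i,j-1}-\Psi_{i-1,j-1}\big)
+\tfrac{b_y}{2\Delta y}\big(\Psi_{i,j}-\Psi_{i,j-1}+\Psi_{i-1,j}-\Psi_{i-1,j-1}\big)\Big],
\]
i.e.\ $-\big((b\cdot\nabla\Psi)_{\mathrm{app}}\big)_{i-\frac12,j-\frac12}$, not $+$. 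The correct discrete identity therefore reads
\[
\sum_{i,j}(\nabla\cdot(b\Phi)_{\mathrm{app}})_{i,j}\,\Psi_{i,j}
= -\sum_{i,j}\Phi_{i-\frac12,j-\frac12}\,\big((b\cdot\nabla\Psi)_{\mathrm{app}}\big)_{i-\frac12,j-\frac12},
\]
in exact parallel with the continuous Green formula $\int_\Omega \nabla\cdot(b\phi)\,\psi\,dx = -\int_\Omega \phi\,(b\cdot\nabla\psi)\,dx$ (no boundary term, consistently with your remark on $\partial\Omega$). This is what the paper means by ``adjoint to each other'' via the discrete Green formula; it is also what makes the composed operator in \eqref{eq:ellipticop:consitent} symmetric negative semi-definite. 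Fix the sign and your argument is complete.
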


\begin{proof}
Easy and left to the reader, thanks to a discrete Green formula. 
\end{proof}

\medskip
\noindent
Next, we define $\left(\nabla \cdot ( (b \otimes b) \cdot
\nabla)
\right)_{\mbox{\scriptsize{app}}}$ by the composition  of the two operators
$ \big(b \cdot \nabla \big)_{\mbox{\scriptsize{app}}} $ and $\nabla \cdot \big( b \, \cdot
 \,\big)_{\mbox{\scriptsize{app}}} $: 
 
\begin{definition} We define: 
\begin{equation}\label{eq:ellipticop:consitent}
\left(\nabla \cdot ( b \otimes b \cdot
\nabla)
\right)_{\mbox{\scriptsize{app}}}
=\left(\nabla \cdot ( \cdot \hspace*{3 mm} b 
)\right)_{\mbox{\scriptsize{app}}} \circ \left(b \cdot \nabla \right)_{\mbox{\scriptsize{app}}} ,
\end{equation}
where $\circ$ is the composition operation. 
\label{def_ope_sec_order}
\end{definition}

\medskip
\noindent
Finally, the approximation of problem (\ref{fp}), (\ref{fp_bc}) is by solving the discrete problem for the piecewise constant function $g$ on ${\mathcal D}$: 
\begin{eqnarray} \label{discret}
 \left(\nabla \cdot ( b (\otimes) b \cdot
\nabla)
\right)_{\mbox{\scriptsize{app}}}g 
= \left( \nabla \cdot ( b  f )\right)_{\mbox{\scriptsize{app}}}
,
\end{eqnarray}
together with Dirichlet boundary conditions on $g$, where $f$ is a piecewise constant function on ${\mathcal R}$. 

Now, problem (\ref{bi1}), (\ref{bi2}), (\ref{bi3}) for $q^\varepsilon$  can be decomposed in two decoupled second-order elliptic problems of the type (\ref{fp}), (\ref{fp_bc}) and can be solved by a similar method. Indeed by setting $   u = - \nabla \cdot  \left(  ( b \otimes b) \nabla h \right)  $, we get that (\ref{bi1}), (\ref{bi2}), (\ref{bi3}) is equivalent to the following two elliptic problems:
\begin{eqnarray} \label{el1}
 \nabla \cdot \left( (  b \otimes  b)  \nabla  u \right) - \varepsilon u &=& \nabla \cdot ( bf   ) \hspace*{5 mm} \mbox{in} \hspace*{5mm} \Omega
\\ \label{b1} ( b \cdot  \nu ) u &=& 0 \hspace*{5 mm} \mbox{on} \hspace*{5mm} \partial \Omega
\end{eqnarray}
and 
\begin{eqnarray}
\label{el2} -  \nabla \cdot \left( (  b \otimes  b )  \nabla h \right) &=& u  \hspace*{5 mm} \mbox{in} \hspace*{5mm} \Omega
\\ \label{b2} ( b \cdot  \nu ) h &=& 0 \hspace*{5 mm} \mbox{on}
\hspace*{5mm} \partial \Omega .
\end{eqnarray}

To summarize, the resolution of problem (\ref{P1}), (\ref{bc1}) reduces to three independent resolutions of problems similar to (\ref{discret}).

%%%%%%%%%%%%%%%%%%%%%%%%%%%%%%%%%%%%%%%%%%%%%%%%%%%%%%%%%%%%%%%%%%
%%%%%%%%%%%%%%%%%%%%%%%%%%%%%%%%%%%%%%%%%%%%%%%%%%%%%%%%%%%%%%%%%%
%%%%%%%%%%%%%%%%%%%%%%%%%%%%%%%%%%%%%%%%%%%%%%%%%%%%%%%%%%%%%%%%%%
%%%%%%%%%%%%%%%%%%%%%%%%%%%%%%%%%%%%%%%%%%%%%%%%%%%%%%%%%%%%%%%%%%
\setcounter{equation}{0}
\section{Application to the Euler-Lorentz system in the drift
  limit}
\label{DA2}

%%%%%%%%%%%%%%%%%%%%%%%%%%%%%%%%%%%%%%%%%%%%%%%%%%%%%%%%%%%%%%%%%%
%%%%%%%%%%%%%%%%%%%%%%%%%%%%%%%%%%%%%%%%%%%%%%%%%%%%%%%%%%%%%%%%%%
\subsection{Introduction}
\label{DA2_intro}

In this section the drift-fluid limit of the isothermal
  Euler-Lorentz is investigated. This regime is representative of
  strongly magnetized plasma, for which the pressure term equilibrates
  the Lorentz force. It is obtained by letting a
  dimensionless parameter $\varepsilon$, representing the non-dimensional
  gyro-period as well as the square Mach number, go to zero. This limit
  is singular because the
  momentum equation in the direction of the magnetic field degenerates. Since the field may not be uniformly large, we wish to derive an Asymptotic- Preserving (AP)
  scheme which guarantees accurate discretizations of both the limit
  regime for strongly magnetized plasma  ($\varepsilon \ll 1$) and 
  the standard Euler-Lorentz system when the
  field strength is mild ($\varepsilon \sim 1$). 
  With this aim, the Euler-Lorentz system
  is discretized in time by a semi-implicit scheme. 
  
  This scheme has already been studied in \cite{DDSV_JCP09} for a uniform and constant magnetic field aligned with one coordinate
and for physically less meaningful Dirichlet boundary  conditions. 
The present methodology allows us to investigate the case of non-uniform magnetic fields and Neumann boundary conditions. Indeed, the 
  anisotropic elliptic equation (\ref{P1}), (\ref{bc1}) appears as the central building block of the scheme, which allows for the
  computation of the field-aligned momentum component. In this presentation, we will mainly focus on this aspect, the other ones being unchanged compared to \cite{DDSV_JCP09}.

%%%%%%%%%%%%%%%%%%%%%%%%%%%%%%%%%%%%%%%%%%%%%%%%%%%%%%%%%%%%%%%%%%
%%%%%%%%%%%%%%%%%%%%%%%%%%%%%%%%%%%%%%%%%%%%%%%%%%%%%%%%%%%%%%%%%%
\subsection{The Euler-Lorentz model and the AP scheme}
\label{ss_DA2}

The scaled isothermal Euler-Lorentz model takes
the form: 
\begin{eqnarray}
  &&\partial_t{n_{\varepsilon}} +  \nabla \cdot
  \left(n_{\varepsilon}  u_{\varepsilon}\right) = 0\,,\label{ec}\\
  &&\varepsilon \Big[\partial_t{\left(n_{\varepsilon}
      u_{\varepsilon}\right)} +  \nabla \cdot\left(n_{\varepsilon} 
     u_{\varepsilon} \otimes  u_{\varepsilon}\right)\Big] +
  T\, \nabla n_{\varepsilon} = n_{\varepsilon} \left( E + 
  u_{\varepsilon} \times  B \right)\,,\label{em}
\end{eqnarray} 
where $n_{\varepsilon}$, $ u_{\varepsilon}$ and $T$ are the density, 
the velocity and the temperature of the  ions, respectively. Here, the electric field $ E$ and the magnetic field
$ B$ are assumed to be given functions. The parameter $\varepsilon$ is related to the gyro-period of the particles about the magnetic field lines, and simultaneously to the squared Mach number. We refer to \cite{DDSV_JCP09} for more details on the model, the scaling and the drift-fluid limit $\varepsilon \to 0$. 

Now we introduce the time dicretization of the model. 
 Let $ B^m$ be the magnetic field
at time~$t^m$, $|B|^m$ its magnitude and ${ b^m}= { B^m}/|B|^m$ its
direction. For a given vector field $v$, denote by
$(v)_\parallel^m$ and $( v)_\bot^m$ its parallel and perpendicular 
components with respect to $ b^m$ ie   
\begin{eqnarray*}
v = (v)_\parallel^m \,b^m + ( v)_\bot^m, \hspace*{3 mm}
(v)_\parallel^m = v \cdot b^m, \hspace*{3 mm} ( v)_\bot^m = b^m \times (v \times b^m).
\end{eqnarray*}
 Similarly, we denote by
$\nabla_\parallel^m$ and $\nabla_\parallel^m \cdot$\, the parallel
gradient and divergence operators respective to this field. The time semi-discrete scheme proposed in \cite{DDSV_JCP09} is as follows: 

\begin{definition}\label{def:schema:AP}
The AP scheme is the scheme defined by:
\begin{eqnarray}
  &&\frac{n^{m+1} - n^m}{\Delta t} +   \nabla \cdot \left(n
  u\right)^{m+1} = 0\,,\label{DA5e1}\\
  &&\varepsilon \Big[\frac{\left(n u \right)^{m+1}-\left(n u\right)^m}{\Delta t} +  \nabla \cdot \left(n  u \otimes
     u \right)^m\Big] + T\left(  \nabla n^{\#}\right)^{m+1}
  \notag\\
 && \hspace{2.0cm}= n^m  E^{m+1} + \left(n u \right)^{m+1}
  \times  B^{m+1}\,,\label{DA5e2}
\end{eqnarray} 
where $\left( \nabla n^{\#}\right)^{m+1}$ is given by,
 \begin{equation}\label{DA5e3}
   \left( \nabla  n^{\#}\right)^{m+1} = \left( { \nabla }
   n^m\right)_\perp^{m+1} + \left(  \nabla
   n^{m+1}\right)_{\parallel}^{m+1} \,{ b}^{m+1}\,.
 \end{equation}
\end{definition}

\medskip
\noindent
By considering the scalar product of \eqref{DA5e2} with ${
b}^{m+1}$, we get
\begin{eqnarray*}
  \varepsilon (  \frac{ \left(n  u\right)^{m+1}-\left(n {
          u}\right)^{m}  }{\Delta t} 
+  \nabla \cdot (n_{\varepsilon}  u_{\varepsilon} \otimes 
u_{\varepsilon} )^{m} ) \cdot b^{m+1}            
\\ = -T  \nabla ^{m+1} n^m \cdot b^{m+1} + n^m  E^{m+1}  \cdot
    b^{m+1}
 \end{eqnarray*}
 and after easy computations \cite{DDSV_JCP09}, we find that $\left(n  u\right)^{m+1}_{\parallel} $ satisfies the following  anisotropic elliptic problem: 
\begin{equation}\label{DA5e14}
  \begin{aligned}
    &\frac{\varepsilon}{\Delta t}\left(n  u\right)^{m+1}_{\parallel} 
    - T\,\Delta t \,\nabla_{\parallel}^{m+1}
    \Big(\nabla^{m+1}_{\parallel} \cdot \left( (n
     u)^{m+1} \right)^{m+1}_{\parallel} \Big) \\   
    &\hspace{1.0cm}=T\,\Delta t \, \nabla_{\parallel}^{m+1}\Big( \nabla \cdot
    \left( (n  u)^{m+1} \right)^{m+1}_{\perp} \Big)
    -T\,\nabla_{\parallel}^{m+1} n^m\\  
    &\hspace{1.1cm}+\Big[\frac{\varepsilon}{\Delta
    t}\left(n  u\right)^m - \varepsilon \Big( \nabla \cdot
    \left(n  u \otimes  u\right)^m \Big) + n^m  E^{m+1}
    \Big]_\parallel^{m+1} \,.  
\end{aligned}
\end{equation}
By setting $(n  u)_{\parallel}^{m+1} =
\phi^{\varepsilon}$ and by taking $f = f_1 +f_2$ with
\begin{eqnarray}
f_1 &=& \frac{1}{\Delta t}  b \cdot \nabla (\nabla \cdot (n 
  u_{\perp}^{m+1})) , \label{eq:EL_f1}
\\ f_2 &=& - \Big[ \frac{\varepsilon}{T \, (\Delta t)^2} (n \,    u)^{m}
- \frac{\varepsilon }{T \, \Delta t} \,  \nabla \cdot 
(n \, { u} \otimes { u} )^{m} + n^m E^{m+1}
\Big ]_{\parallel}^{m+1}
\\ &-& \frac{1}{\Delta \, t} ( b \cdot \nabla n^m ). \label{eq:EL_f2}
\end{eqnarray}
this problem can be put in the framework of (\ref{P1}). In \cite{DDSV_JCP09}, because $b$ was chosen parallel to one of the coordinate axes, a direct discretization of (\ref{DA5e14}) using finite differences could be performed. Here, for an arbitrary anisotropy direction $b$, we use the method developed in the previous sections. We do not detail the description of the discretization of the other equations, since it follows \cite{DDSV_JCP09}. 

The right-hand side (\ref{eq:EL_f2}) can be decomposed 
as $f_2^\varepsilon =
  f_2^{(0)} + \varepsilon  f_2^{(1)} $
with $f_2^{(0)}$ corresponding to the first two terms and $f_2^{(1)}$, to the last two one. 
Moreover if we suppose that 
\begin{eqnarray}
\Big[ n^m E^{m+1}\Big ]_{\parallel}^{m+1}
- \frac{1}{\Delta \, t} ( b \cdot \nabla n^m ) \, \in \, K^{\perp} ,
\label{eq:EL_comp}
\end{eqnarray}
the compatibility condition (\ref{comp}) is satisfied. This property amounts to saying that the integrated force along a magnetic field line is zero. If the property is not satisfied, parallel velocities of order $O(1/\varepsilon)$ are generated, which is physically unrealistic (because collisions will ultimately slow down the plasma ions). Therefore, assuming (\ref{eq:EL_comp}) is physically justified.

As in \cite{DDSV_JCP09}, we will compare the AP scheme with 
the classical semi-discrete scheme for the Euler-Lorentz model, given by:

\begin{definition}\label{def:schema:class}
The 'classical' semi-discrete scheme is defined by: 
\begin{eqnarray}
&&\frac{n^{m+1} - n^m}{\Delta t} +   \nabla \cdot \left(n{
  u}\right)^{m} = 0\,,\label{class1}\\
&&\varepsilon \Big[\frac{\left(n  u  \right)^{m+1}-\left(nu\right)^m}{\Delta t} +  \nabla \cdot \left(n { u} \otimes
     u \right)^m\Big] + T\left(  \nabla n \right)^{m}
  \notag\\
  &&\hspace{2.0cm}= n^m E^{m+1} + \left(n u \right)^{m+1}
  \times  B^{m+1}\,.\label{class2}
\end{eqnarray}
\end{definition}

\medskip
\noindent
In \cite{DDSV_JCP09}, it is shown that this scheme is not uniformly stable
with respect to $\varepsilon$ and so that it cannot be AP. 

Except from the parallel momentum equation, which has just been discussed, the other equations of the model are discretized following \cite{DDSV_JCP09}. For the sake of brevity, we will not reproduce their presentation here.

%%%%%%%%%%%%%%%%%%%%%%%%%%%%%%%%%%%%%%%%%%%%%%%%%%%%%%%%%%%%%%%%%%
%%%%%%%%%%%%%%%%%%%%%%%%%%%%%%%%%%%%%%%%%%%%%%%%%%%%%%%%%%%%%%%%%%
\subsection{Boundary conditions}
\label{ss_DA6}

The following boundary conditions are set up for test purposes only. 
We impose Dirichlet boundary conditions on the density $n^{m+1}
=n_B$ with $n_B$ independent of time. For the perpendicular momentum, we impose 
the relation obtained after taking the limit when $\varepsilon \to 0$ in (\ref{em}),
\begin{eqnarray*}
n  u_{\perp}^{m+1} = - \frac{1}{|B|^{m+1}}  b \times \left( 
T \,  \nabla n^m + n^m  E ^{m+1} \right).
\end{eqnarray*}
By considering the mass conservation equation at the domain boundary, we
have 
\begin{eqnarray*}
\frac{n^{m+1}-n^m}{\Delta t} + \nabla \cdot (b \, n^{m+1}  u_{\parallel}^{m+1} ) 
+ \nabla \cdot (n u)_{\perp}^{m+1}  = 0, \hspace*{3 mm}
\mbox{on} \hspace*{3 mm} \partial \Omega .
\end{eqnarray*}
Therefore, as the density satisfies Dirichlet boundary conditions with time-independent Dirichlet values, 
we get 
\begin{eqnarray*}
( b \cdot {\nu})  \nabla \cdot (b \, n u_{\parallel}^{m+1}  ) 
= - (b \cdot \nu) \nabla \cdot ( n \, u_{\perp}^{m+1} ),\hspace*{3 mm}
\mbox{on} \hspace*{3 mm} \partial \Omega .
\end{eqnarray*}
Therefore, $n u_{\parallel}^{m+1} $ is a solution to the anisotropioc elliptic
problem with inhomogeneous Neumann boundary 
conditions (\ref{sysr}), (\ref{bc}), with $\kappa = - (b \cdot \nu)
\nabla \cdot ( n \, u_{\perp}^{m+1} ).$
Then, we can apply the framework of section~\ref{bnh}.
When $n u_{\parallel}$ has been calculated, an approximation is employed in
order to provide values of $n u_{\parallel}$ in a layer of fictious cells surrounding the boundary, by using homogeneous Neumann boundary conditions. The values in the fictitious cells are then useful to compute gradient terms which occur in the other equations of the Euler-Lorentz model.

%%%%%%%%%%%%%%%%%%%%%%%%%%%%%%%%%%%%%%%%%%%%%%%%%%%%%%%%%%%%%%%%%%
%%%%%%%%%%%%%%%%%%%%%%%%%%%%%%%%%%%%%%%%%%%%%%%%%%%%%%%%%%%%%%%%%%
%%%%%%%%%%%%%%%%%%%%%%%%%%%%%%%%%%%%%%%%%%%%%%%%%%%%%%%%%%%%%%%%%%
%%%%%%%%%%%%%%%%%%%%%%%%%%%%%%%%%%%%%%%%%%%%%%%%%%%%%%%%%%%%%%%%%%
\setcounter{equation}{0}
\section{Numerical results for the elliptic
  problem}
  \label{simu:ellip}

%%%%%%%%%%%%%%%%%%%%%%%%%%%%%%%%%%%%%%%%%%%%%%%%%%%%%%%%%%%%%%%%%%
%%%%%%%%%%%%%%%%%%%%%%%%%%%%%%%%%%%%%%%%%%%%%%%%%%%%%%%%%%%%%%%%%%
\subsection{Introduction}
\label{subsec_simu_ellip_intro}

In this section the efficiency of the numerical method
  introduced in sections~\ref{elip} and \ref{num} for the singular perturbation problem
  (\ref{P1}), (\ref{bc1}) is investigated through numerical
  experiments. These experiments are carried out on a two dimensional
  uniform Cartesian mesh. Two sets of test cases are presented. In the first
  one, the anisotropy, or magnetic field, is oblique, which means that
  it is assumed uniform in space, but not necessarily aligned
  with any coordinate axis. In the second set, the field
  direction is non uniform. In both cases, the strength of the anisotropy
  is assumed uniform and is given by the value of $\varepsilon$. An
  analytical solution $\phi_a$ is constructed for the singular perturbation
  problem (\ref{P1}), (\ref{bc1}) and is compared with its approximation
  $\phi^h$ computed on the mesh. 
  For the test cases, the following $L^1$, $L^2$ and $L^\infty$
  norms are used to estimate the errors between the numerical approximation
  $\phi^h$  and the analytical solution 
  $\tilde\phi_a$:
\begin{equation}\label{eq:def:error:norm}
  \begin{split}
e_1 &= \frac{\| \tilde\phi_a - \phi^h \|_{L^1}}{ \|\tilde\phi_a\|_{L^1}} = 
\frac{\sum_{i,j} |  \phi_a(x_i,y_j) - \phi^h(i,j)|}{\sum_{i,j} |
  \phi_a(x_i,y_j) |},
\\
e_2 &= \frac{\|\tilde\phi_a  - \phi^h  \|_{L^2}}{ \|\tilde\phi_a\|_{L^2}} = 
\frac{(\sum_{i,j} |\phi_a(x_i,y_j)  - \phi^h_{i,j}  |^2)^{\frac{1}{2}}}{(\sum_{i,j} |
  \phi_a(x_i,y_j) |^2)^{\frac{1}{2}}}, 
\\   e_{\infty} &= \frac{\| \tilde\phi_a  - \phi^h  \|_{L^\infty}}{
  \|\tilde\phi_a\|_{L^\infty}} = \frac{\max_{i,j} |\phi_a(x_i,y_j)  - \phi^h_{i,j}|}{\max_{i,j} |\phi_a(x_i,y_j)|} \,.
  \end{split}
\end{equation}

%%%%%%%%%%%%%%%%%%%%%%%%%%%%%%%%%%%%%%%%%%%%%%%%%%%%%%%%%%%%%%%%%%
%%%%%%%%%%%%%%%%%%%%%%%%%%%%%%%%%%%%%%%%%%%%%%%%%%%%%%%%%%%%%%%%%%
\subsection{Numerical results for an oblique magnetic field}

%%%%%%%%%%%%%%%%%%%%%%%%%%%%%%%%%%%%%%%%%%%%%%%%%%%%%%%%%%%%%%%%%%
\subsubsection{Introduction and test case settings}

For these numerical experiments the simulation domain is
  the square 
$\Omega = [0,1] \times [0,1]$. The magnetic field is
  defined by $ B = (\sin\alpha , \cos \alpha  , 0      )$,
with $\alpha$ the angle of the b-field with the $x$-axis ranging from
0 to ${\pi}/{2}$. In order to validate the numerical method an 
analytical solution denoted $\phi_a$ for problem (\ref{P1}),
(\ref{bc1}) is constructed. It is written
\begin{equation}\label{eq:analytical:f}
\begin{split}  
  \phi_a(x,y) &= \sin\big(x \sin (\alpha)   - y
  \cos (\alpha) \big) + b \cdot \nabla H(x,y) \,, \\
  f^\varepsilon_a(x,y) &= - b \cdot \nabla \big(\nabla \cdot ((b\otimes b)
  \nabla  H(x,y))\big) \\
  & \hspace*{2cm} + \varepsilon \big( \sin\big(x \sin (\alpha)   - y
  \cos (\alpha) \big) + b \cdot \nabla H(x,y) \big) \,,\\
  H(x,y) &=  \big((x-1)(y-1)xy\big)^3 \,.
\end{split}  
\end{equation}
The function $\phi_a$  
is the solution of problem (\ref{P1}), (\ref{bc1}) with the right-hand side
$f^\varepsilon_a$. $\phi_a$ presents itself as decomposed into $p^\varepsilon$ (first terms) and $q^\varepsilon$ (second term). Note also  that $f^{\varepsilon}_a$ can be decomposed as  $f^{\varepsilon}_a
= f^{(0)}_a + \varepsilon f^{(1)}_a$ with $f^{(0)}_a =- b\cdot \nabla h $ and
$h =\nabla\cdot ( (b \otimes b) \nabla H(x,y))$. The function $h$
verifies homogeneous Dirichlet boundary conditions on the domain boundaries,
which implies, according to theorem \ref{fondamental}, that $f^{(0)}_a \in K^{\perp}$ and the compatibility
condition \eqref{comp} is satisfied. However, for the simulations carried out below, the construction of the
right-hand side $f_a^\varepsilon$ is performed using the discrete
operators $ \left(   b \cdot  \nabla  \right)_{\mbox{\scriptsize{app}}}$ and 
$\nabla \cdot ( \hspace*{2 mm} \cdot \hspace*{2 mm}  b )_{\mbox{\scriptsize{app}}}$
in order to ensure that the compatibility condition \eqref{comp} is satisfied by
the discrete operators, namely $ f_a^{(0)} \in  K_{\mbox{\scriptsize{app}}}^{\perp} $, 
where
\begin{equation*} 
 K_{\mbox{\scriptsize{app}}} = \{  \phi \; / \; \nabla \cdot ( b\phi )_{\mbox{\scriptsize{app}}} = 0    \} \,, \qquad
 K_{\mbox{\scriptsize{app}}}^{\perp} = \left(   b \cdot  \nabla  \right)_{\mbox{\scriptsize{app}}}
(\mathcal{W}_0) \,.
\end{equation*}

%%%%%%%%%%%%%%%%%%%%%%%%%%%%%%%%%%%%%%%%%%%%%%%%%%%%%%%%%%%%%%%%%%
\subsubsection{Homogeneous Neumann boundary conditions} \label{oblhom}

This simulation is run with $\alpha = \pi/3$. On figure \ref{erro369}, we represent
the relative 
errors as functions
of the mesh sizes for different values of $\varepsilon$ ranging from $10^{-3}$ to $10^{-9}$. \begin{figure}[!ht]
\begin{center}
\begin{minipage}[c]{0.48\textwidth}
  \subfigure[$\varepsilon=10^{-3}$.\label{erro3}]{%
  \includegraphics[width=\textwidth]{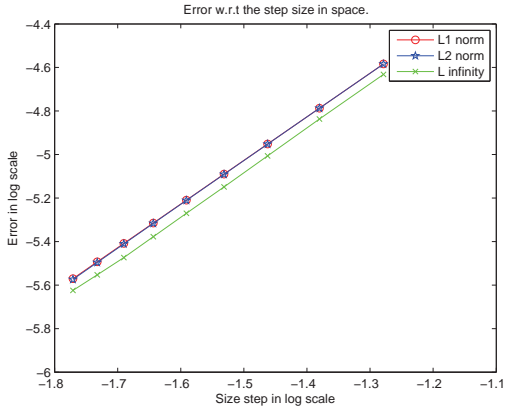}}
\end{minipage}\hfill% 
\begin{minipage}[c]{0.48\textwidth}
\subfigure[$\varepsilon=10^{-6}$.\label{erro6}]{%
\includegraphics[width=\textwidth]{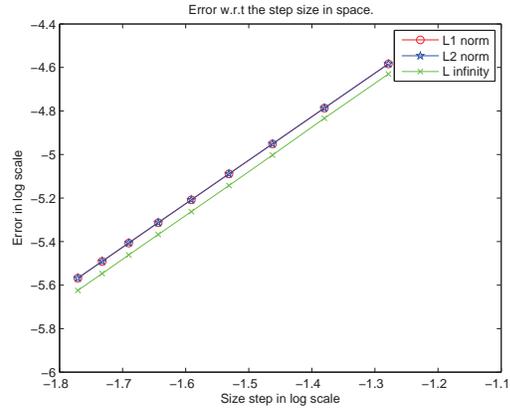}}
\end{minipage} 
\begin{minipage}[c]{0.48\textwidth}
\subfigure[$\varepsilon=10^{-9}$.\label{erro9}]{%
\includegraphics[width=\textwidth]{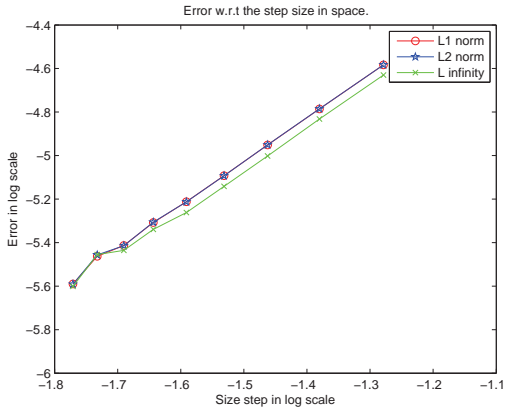}}
\end{minipage}\hfill%
\begin{minipage}[c]{0.45\textwidth}
\caption{Oblique magnetic field test case with $\alpha=\pi/3$:
  error norms, defined by~\eqref{eq:def:error:norm}, for the solution
  $\phi^\varepsilon$ as a function of the mesh size, in decimal
  logathimic scales, and for different
  values of $\varepsilon$.}\label{erro369}
\end{minipage}
\end{center}
\end{figure}%%
The curves of figure~\ref{erro369} are plotted using logarithmic
decimal scales. We observe a linear decrease of the errors with
vanishing mesh sizes, with a slope equal to 2, which proves that the global scheme is second
order accurate. More importantly, we observe from figures~\ref{erro3}
and \ref{erro6}, that the precision remains the same while 
$\varepsilon$ is decreased by three orders of magnitude.
However, for the more refined grids using the smallest value of
$\varepsilon$ of this simulation set ($10^{-9}$, see figure~\ref{erro9}), a slight degradation of the convergence is observed for small mesh sizes.

This slight degradation can be explained. Indeed,
$p^{\varepsilon}$ is given by a stiff problem, since $\varepsilon
p^\varepsilon$ is obtained as the difference of two quantities
scaling as $\varepsilon^0= {\mathcal O} (1)$ (see (\ref{bfp}), (\ref{fp})). 
To investigate the influence of $\varepsilon$ on the accuracy
of the  approximation of $p^{\varepsilon}$, the $L^\infty$ norm of  the relative error made on $p^\varepsilon$ and on $\nabla
\cdot (b p^\varepsilon$) as functions of $\varepsilon$ are plotted on
figure~\ref{fig:p:study}. 

\begin{figure}[!ht]
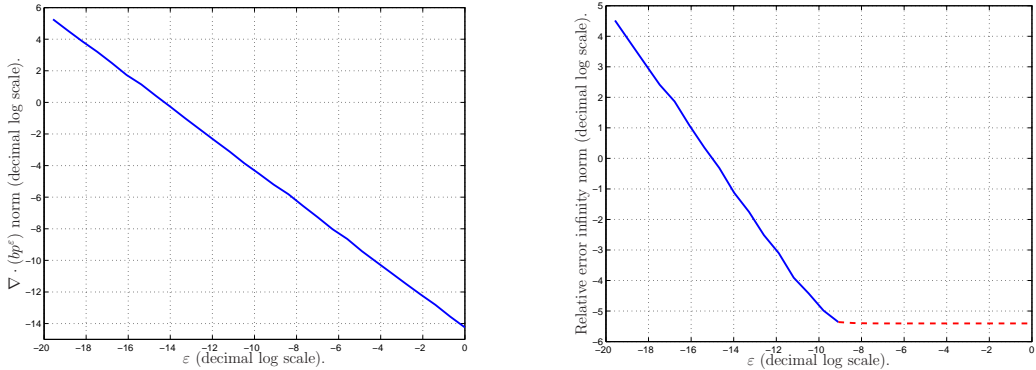

\begin{center}
\begin{minipage}[c]{0.45\textwidth}
\subfigure[Infinity norm for $\nabla \cdot (b p^\varepsilon)$
      as a function of $\varepsilon$ in decimal log. scales.\label{errdivp}   ]{
  \psfrag{X}[][][0.5]{$\varepsilon$ (decimal log
        scale).}
\psfrag{Y}[][][0.5]{$\nabla \cdot (b
  p^\varepsilon)$ norm  (decimal log scale).}
    \includegraphics[width=\textwidth]{divpeps60.epsc}  }
  \end{minipage}\hfill%
  \begin{minipage}[c]{0.45\textwidth}
    \subfigure[Relative error in infinity norm for $p^\varepsilon$
      as a function of $\varepsilon$ in decimal log. scales.\label{errpep}   ]{
        \psfrag{X}[][][0.5]{$\varepsilon$ (decimal log scale).}
      \psfrag{Y}[][][0.5]{Relative error infinity norm (decimal log scale).}
      \includegraphics[width=\textwidth]{errelp60.epsc}  }
  \end{minipage}  
 \begin{minipage}[c]{\textwidth}
   \caption{Oblique magnetic field test case for $\alpha = \pi/3$
     and $\Delta x = \Delta y = 1/60$. Approximation of the
     $p^\varepsilon$ part of the solution.}\label{fig:p:study}
   \end{minipage}
\end{center}
\end{figure}

Figure~\ref{errdivp} shows a linear behavior of 
$\nabla \cdot(b p^\varepsilon)$ with vanishing $\varepsilon$ (in log scale). 
To explain this feature, we note that the discretization of the second order
operator in (\ref{fp}) provides a computation of $\varepsilon \left(\nabla
  \cdot ( \cdot \hspace*{3 mm} b )\right)_{\mbox{\scriptsize{app}}} (p^\varepsilon)$
with the precision of the linear system solver used for
the computation of $g^\varepsilon$, which is limited by round-off errors. This error is amplified after multiplication by the factor $1/\varepsilon$. This analysis still holds for the  accuracy of $p^\varepsilon$  as a function of $\varepsilon$ 
 represented on figure~\ref{errpep}  with slight
differences. For the largest values of $\varepsilon$, we observe a
plateau (red dashed line) explained by the discretization error
of the discrete operators. The space discretization introduced here
is second order accurate, i.e. is $\mathcal{O}(h^2)$ where $h = \max(\Delta x, \Delta y)$.
Since the right-hand side is well prepared 
this error only applies to the $ \varepsilon f^{(1)}$ part of
$f^\varepsilon$ and is then proportional to $\varepsilon \mathcal{O}(h^2)$ in $b\cdot \nabla g^\varepsilon$,
giving rise to a $\mathcal{O}(h^2)$ consistency error for
$p^\varepsilon$. The value of the plateau is thus only dependent
of the mesh sizes and does not depend on the values of $\varepsilon$. With vanishing
values of $\varepsilon$ the round-off errors due to the linear system solver
grow linearly (in log scale) until they reach the consistency error ($\mathcal{O}(h^2)$). This occurs for a value of $\varepsilon$ which, for
this test case, can be estimated as approximately $\varepsilon=10^{-9}$. For smaller
$\varepsilon$, the discretization error is negligible compared to the round-off errors amplified by the factor $1/\varepsilon$ and the
accuracy of $p^\varepsilon$ deteriorates linearly with vanishing $\varepsilon$.

The accuracy of the approximation of $p^\varepsilon$ can be made
  totally independent of $\varepsilon$ under the
  assumption that $f^{(0)} = 0$. In this case, both $b\cdot \nabla
  g^\varepsilon$ and $f^\varepsilon$ scale as $\varepsilon$, 
  providing then an approximation of $p^\varepsilon$ independent of $\varepsilon$.
  The numerical methods introduced in \cite{DDLNN_sub, DDN_MMS10} have been developed under this
  assumption that $f^{(0)}=0$. The present paper is developed under a weaker
  hypothesis, required by the application to the Euler-Lorentz model in
  the drift-limit. This explains why a comparable accuracy cannot be reached.
Therefore, strictly speaking, our scheme is AP for the computation of $p^{\varepsilon}$ only when
  $f^{(0)}=0$, or, when $f^{(0)}\not=0$, only if the round-off errors brought by the linear system solver are smaller than the discretization error. Still, it is AP without any restriction for the computation of $q^{\varepsilon}$ (i.e. even when $f^{(0)}\not=0$).

The next simulation is aimed at investigating whether the accuracy depends on the angle between $b$ and the coordinate axes. For this purpose, simulations are
carried out on a mesh composed of $40\times 40$ cells and for  $\alpha$ ranging form 0 to $\pi/2$. When $\alpha=0$ the $b$
field is aligned with the $x$-axis and when $\alpha=\pi/2$, it is aligned with the $y$-axis. The relative errors are
displayed as functions of $\alpha$ on figure~\ref{errtot40}.
\begin{figure}[!ht]
\begin{center}
\includegraphics[width=0.65\textwidth,height=0.3\textheight]{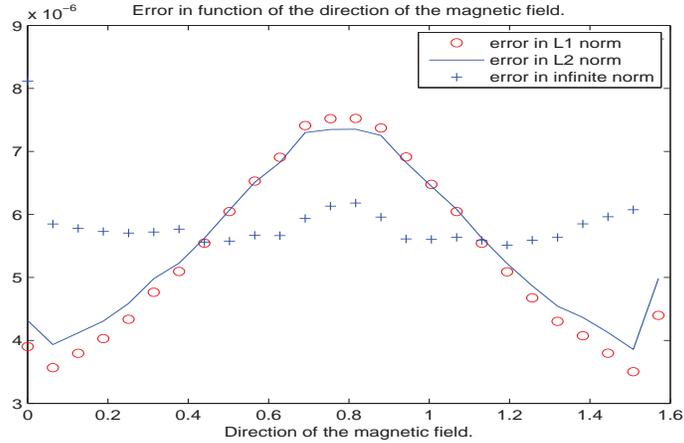} 
\end{center}
\caption{Oblique magnetic field test case for $\varepsilon=10^{-9}$
  and $\Delta x = \Delta y = 1/40$. Norms of the relative error~\eqref{eq:def:error:norm} as a function of
  the angle of the magnetic field with
  the $x$-axis $\alpha$.}\label{errtot40}
\end{figure}%%
We observe that the variations of the errors are small on the whole range of
angles. This confirms that the method provides accurate results, even when the mesh is far from consistent with the $b$-field direction.

%%%%%%%%%%%%%%%%%%%%%%%%%%%%%%%%%%%%%%%%%%%%%%%%%%%%%%%%%%%%%%%%%%
\subsubsection{Inhomogeneous Neumann boundary conditions} 
\label{oblinhom}

We remark that $\phi^{\varepsilon} (x,y) = 2
x^2+y^2$ is an analytical solution of system (\ref{sysr}), (\ref{bc}) for $f_2(x,y) = \varepsilon
(2x^2+y^2)$ and $\kappa = - \nabla \cdot ( b f)$. For this analytical solution and $\varepsilon = 10^{-9}$, we have checked that the relative error does  not exceed $10^{-13}$.

%%%%%%%%%%%%%%%%%%%%%%%%%%%%%%%%%%%%%%%%%%%%%%%%%%%%%%%%%%%%%%%%%%
%%%%%%%%%%%%%%%%%%%%%%%%%%%%%%%%%%%%%%%%%%%%%%%%%%%%%%%%%%%%%%%%%%
\subsection{Numerical results for a non uniform magnetic field}

%%%%%%%%%%%%%%%%%%%%%%%%%%%%%%%%%%%%%%%%%%%%%%%%%%%%%%%%%%%%%%%%%%
\subsubsection{Introduction and test case settings}

In this subsection $\Omega = ]1,2[ \times ]1,2[$ and the magnetic field is given by: 
\begin{eqnarray} \label{B}
 B = |B| \,  b , \hspace*{3 mm}  b = \left( \sin (\theta ) , - \cos (
  \theta ) \right) , \hspace*{3 mm} \tan (\theta ) = \frac{y}{x} .
\end{eqnarray} 
For this case, an analytical
solution of (\ref{P1}), (\ref{bc1}) can be found.
We consider $H_{var}$ defined on $[1,2]\times [1,2]$ by $H_{var}(x,y) =
(1-x)^3(1-y)^3(2-x)^3(2-y)^3$. According to Theorem \ref{fondamental}, 
$b\cdot \nabla H_{var} \in K ^{\perp}$  . So $\phi = 1+b\cdot \nabla H_{var}$ is
the solution of (\ref{P1}), (\ref{bc1}) when the right-hand $f^{\varepsilon}$ of
(\ref{P1}) 
has the expression
\begin{eqnarray*} 
f^{\varepsilon} = -b\cdot \nabla \left( \nabla \cdot (b \otimes b) \nabla
H_{var}  \right)
+ \varepsilon \left( 1+b\cdot \nabla H_{var} \right).
\end{eqnarray*}

%%%%%%%%%%%%%%%%%%%%%%%%%%%%%%%%%%%%%%%%%%%%%%%%%%%%%%%%%%%%%%%%%%
\subsubsection{Homogeneous Neumann boundary conditions}

On figures \ref{err3var}, \ref{err6var} and \ref{err9var}, we have represented
 the relative 
errors as functions
of the mesh size when $\varepsilon$ goes from $10^{-3}$ to $10^{-9}$. We
observe that all the three norms decrease when the mesh sizes
decrease, in a similar fashion as in the oblique uniform $b$-field. 

 \begin{figure}[!ht]
\begin{center}
\begin{minipage}[c]{0.45\textwidth}
\subfigure[$\varepsilon = 10^{-3}$ \label{err3var}]{
\includegraphics[width=\textwidth]{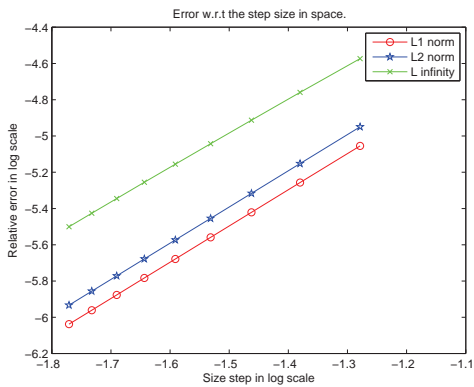}}
\end{minipage}\hfill% 
\begin{minipage}[c]{0.45\textwidth}
\subfigure[$\varepsilon = 10^{-6}$ \label{err6var}]{
\includegraphics[width=\textwidth]{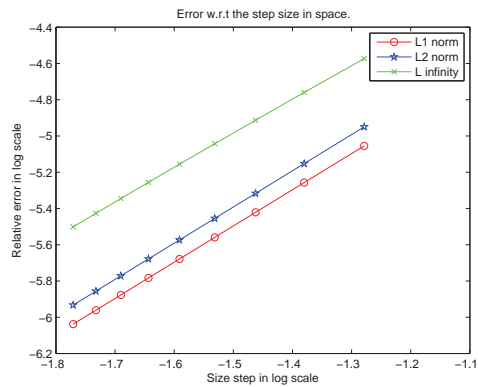}}
\end{minipage} 
\begin{minipage}[c]{0.45\textwidth}
\subfigure[$\varepsilon = 10^{-9}$ \label{err9var}]{
\includegraphics[width=\textwidth]{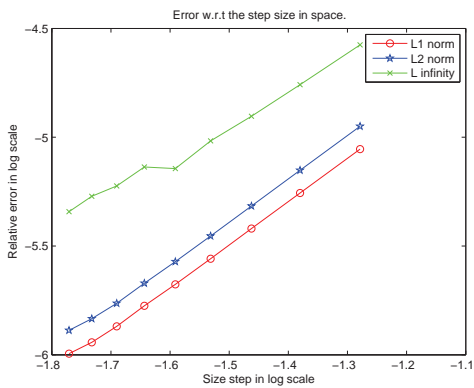}} 
\end{minipage} \hfill
\begin{minipage}[c]{0.45\textwidth}
\caption{Non uniform magnetic field test case:
  error norms, defined by~\eqref{eq:def:error:norm}, for the solution
  $\phi^\varepsilon$ as a function of the mesh size, in decimal
  logathimic scales, and for different values of $\varepsilon$.}\label{errovar369}
\end{minipage}
\end{center}
\end{figure}%%

%%%%%%%%%%%%%%%%%%%%%%%%%%%%%%%%%%%%%%%%%%%%%%%%%%%%%%%%%%%%%%%%%%
\subsubsection{Inhomogeneous Neumann boundary conditions}

We take the test case of subsubsection \ref{oblinhom} again, and we find a similar conclusion: with  $\varepsilon = 10^{-9}$, the relative error in $L^\infty$ norm does not exceed $10^{-11}$.

%%%%%%%%%%%%%%%%%%%%%%%%%%%%%%%%%%%%%%%%%%%%%%%%%%%%%%%%%%%%%%%%%%
%%%%%%%%%%%%%%%%%%%%%%%%%%%%%%%%%%%%%%%%%%%%%%%%%%%%%%%%%%%%%%%%%%
%%%%%%%%%%%%%%%%%%%%%%%%%%%%%%%%%%%%%%%%%%%%%%%%%%%%%%%%%%%%%%%%%%
%%%%%%%%%%%%%%%%%%%%%%%%%%%%%%%%%%%%%%%%%%%%%%%%%%%%%%%%%%%%%%%%%%
\setcounter{equation}{0}
\section{Numerical results for the Euler-Lorentz system in the drift
  limit}\label{sec:comp}

%%%%%%%%%%%%%%%%%%%%%%%%%%%%%%%%%%%%%%%%%%%%%%%%%%%%%%%%%%%%%%%%%%
%%%%%%%%%%%%%%%%%%%%%%%%%%%%%%%%%%%%%%%%%%%%%%%%%%%%%%%%%%%%%%%%%%
\subsection{Introduction and test case settings}

This part is devoted to the validation of the AP-scheme (\ref{DA5e1}),
(\ref{DA5e2}), (\ref{DA5e3}) for the Euler-Lorentz system. Due to the lack of analytical solutions, the validation procedure consists
in comparisons of the AP-scheme with the classical
discretization~\eqref{class2}. 
The classical discretization is subject to a CFL stability condition
that imposes the time step to resolve (i.e. to be smaller than) the fastest time scales involved in the
system. These {\it time-resolved} simulations require a time
step which scales like $\sqrt{\varepsilon}$ (because the CFL condition involves the acoustic wave speed which scales like $1/\sqrt{\varepsilon}$). The 
AP-scheme is designed to be stable independently of $\varepsilon$ when $\varepsilon \to 0$. In these situations, the time step cannot resolve the fastest time scales involved in the system, which leads to {\it under-resolved} simulations. The stability of the AP-scheme in under-resolved situations has be demonstrated in \cite{DDSV_JCP09}. In this case, the requested CFL condition only involves the fluid velocity, which is an $O(1)$ quantity, and not the acoustic speed \cite{DDSV_JCP09} and explains the possibility of using large time steps, independent of $\varepsilon$. We want to check this feature again when the scheme is equipped with our new elliptic solver.

Two test cases are presented, one for an oblique uniform magnetic
field, another one for a non uniform magnetic field with the same expressions as in section~\ref{simu:ellip}. 
 In both cases, the electric field is chosen as $E = (0,0,B_x+
B_y)$, where $B_x$ and $B_y $ are the components of the magnetic
field. The initial condition is defined by the following uniform data:
$n=1$, $(nu)_x=1$, $(nu)_y=-1$ and $(nu)_z=0$ which defines a stationary
solution of the Euler-Lorentz system. A local perturbation of order
$\varepsilon$  in then applied to this stationary state and the evolution of
the system is observed for both the AP and the classical schemes.

%%%%%%%%%%%%%%%%%%%%%%%%%%%%%%%%%%%%%%%%%%%%%%%%%%%%%%%%%%%%%%%%%%
%%%%%%%%%%%%%%%%%%%%%%%%%%%%%%%%%%%%%%%%%%%%%%%%%%%%%%%%%%%%%%%%%%
\subsection{Numerical results for an oblique uniform magnetic field}

The results for the AP and the classical schemes are compared on
figure~\ref{fig:EL:oblique:resolved} in a resolved case. 
\begin{figure}[!ht]
\begin{center}
\begin{minipage}[c]{0.45\textwidth}
  \subfigure[$n$ (AP-scheme).\label{rhoapres}]{
\includegraphics[width=\textwidth]{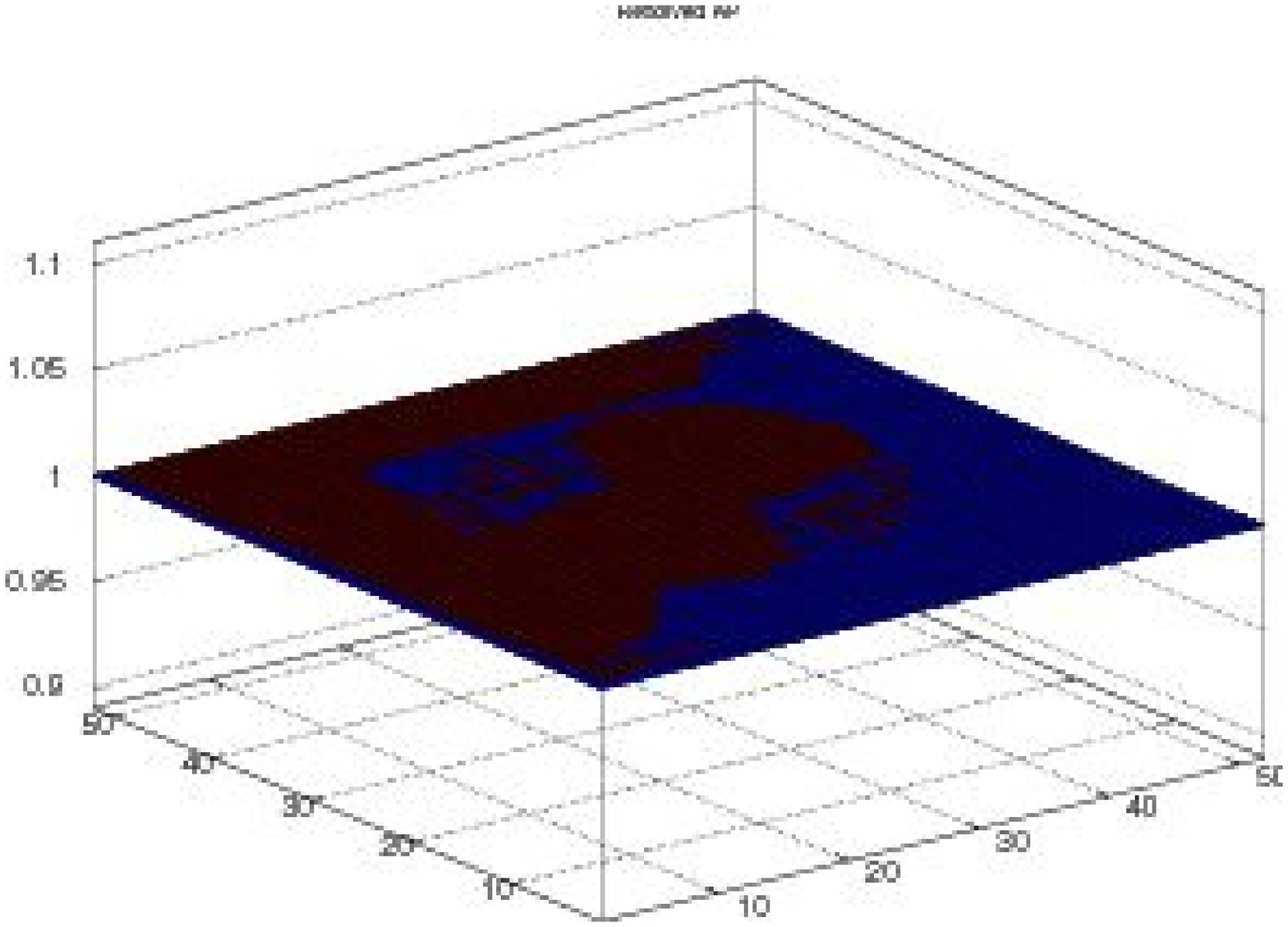c}}
\end{minipage}\hfill% 
\begin{minipage}[c]{0.45\textwidth}
  \subfigure[$n$ (classical scheme).\label{rhoclassres}]{
\includegraphics[width=\textwidth]{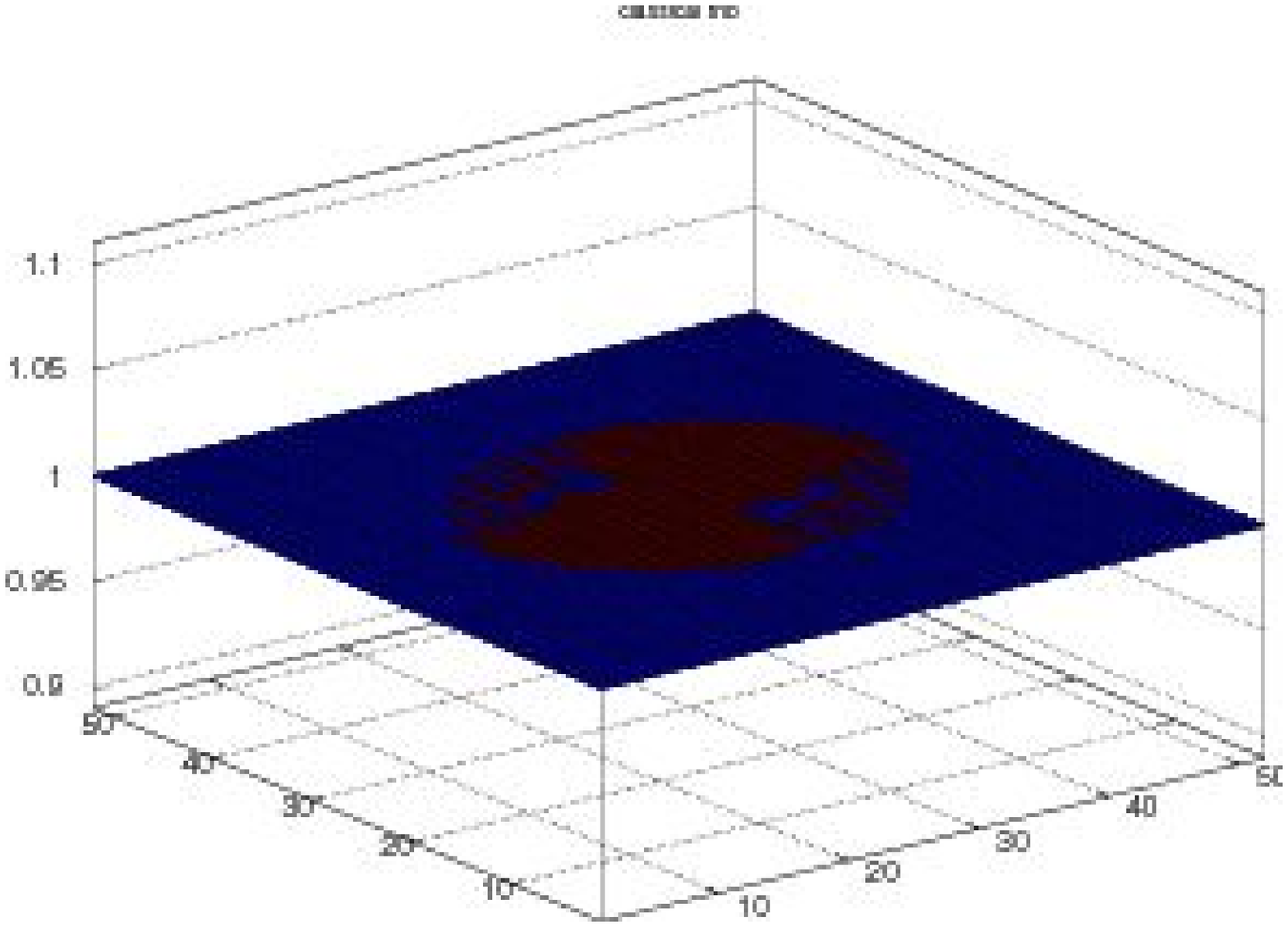c}}
\end{minipage} 

\begin{minipage}[c]{0.45\textwidth}
\subfigure[$nu_x$ (AP-scheme).\label{uxapres}]{
\includegraphics[width=\textwidth]{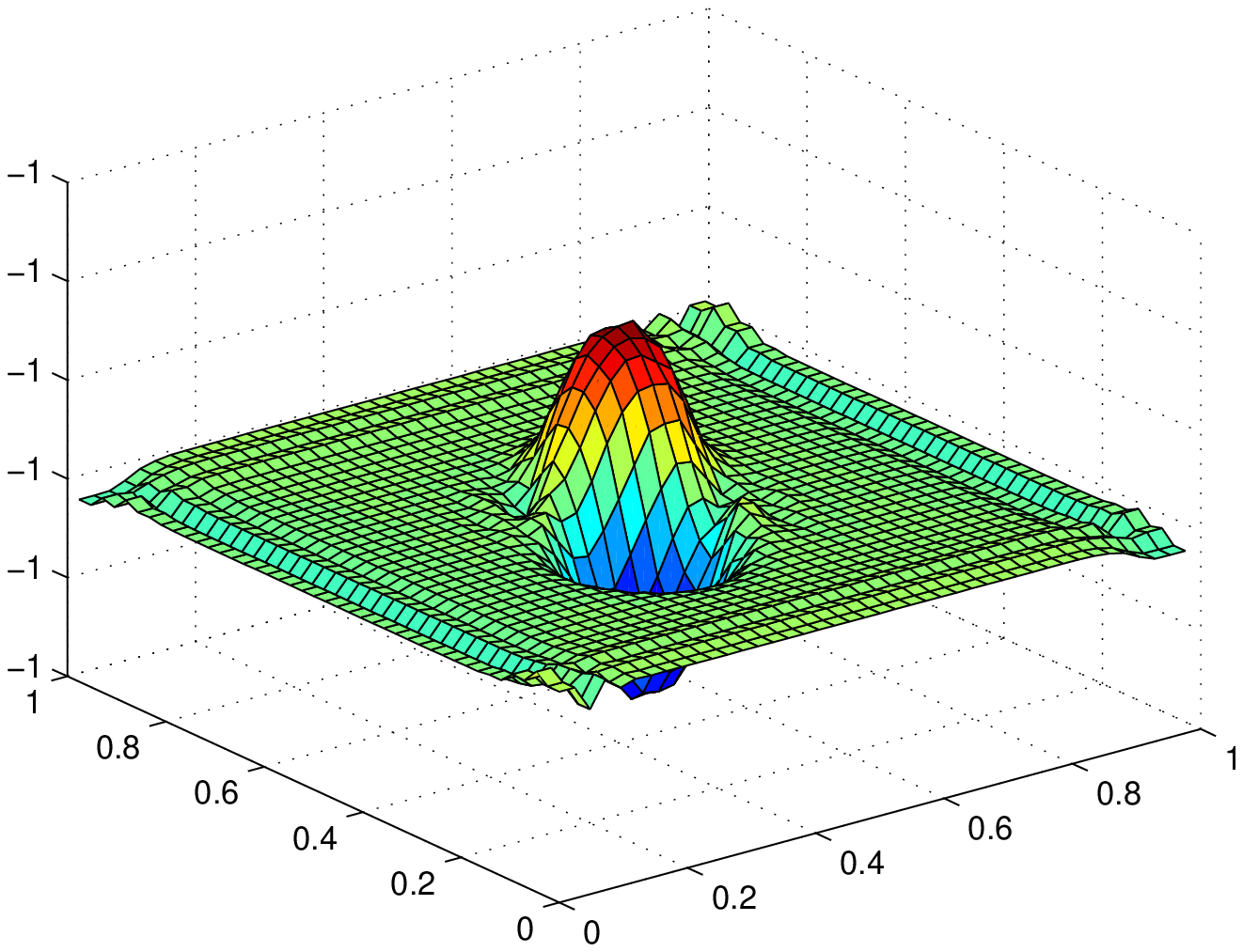}}
\end{minipage}\hfill% 
\begin{minipage}[c]{0.45\textwidth}
\subfigure[$nu_x$ (classical scheme).\label{uxclassres}]{
\includegraphics[width=\textwidth]{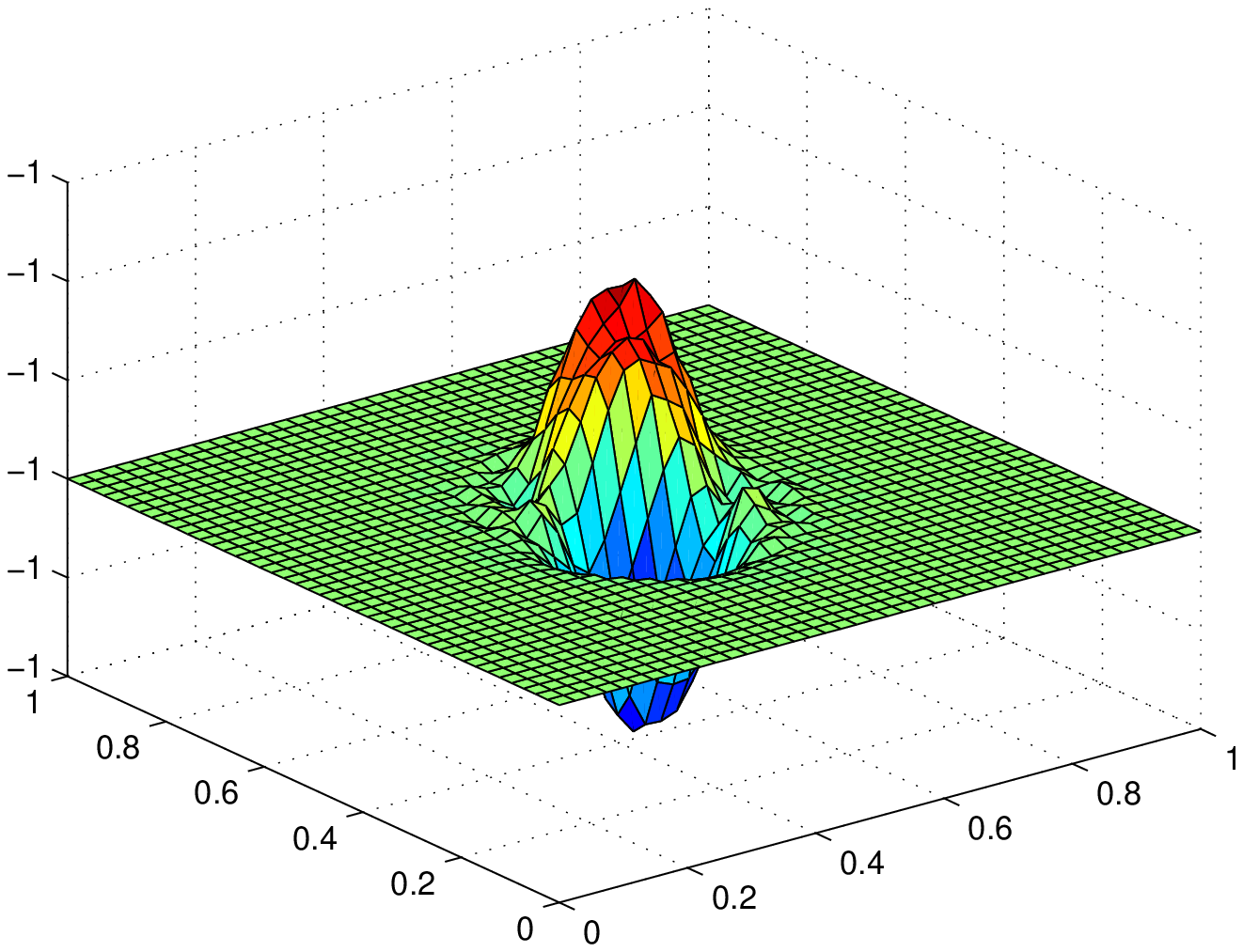}}
\end{minipage} 

\begin{minipage}[c]{0.45\textwidth}
\subfigure[$nu_y$ (AP-scheme).\label{uyapres}]{
\includegraphics[width=\textwidth]{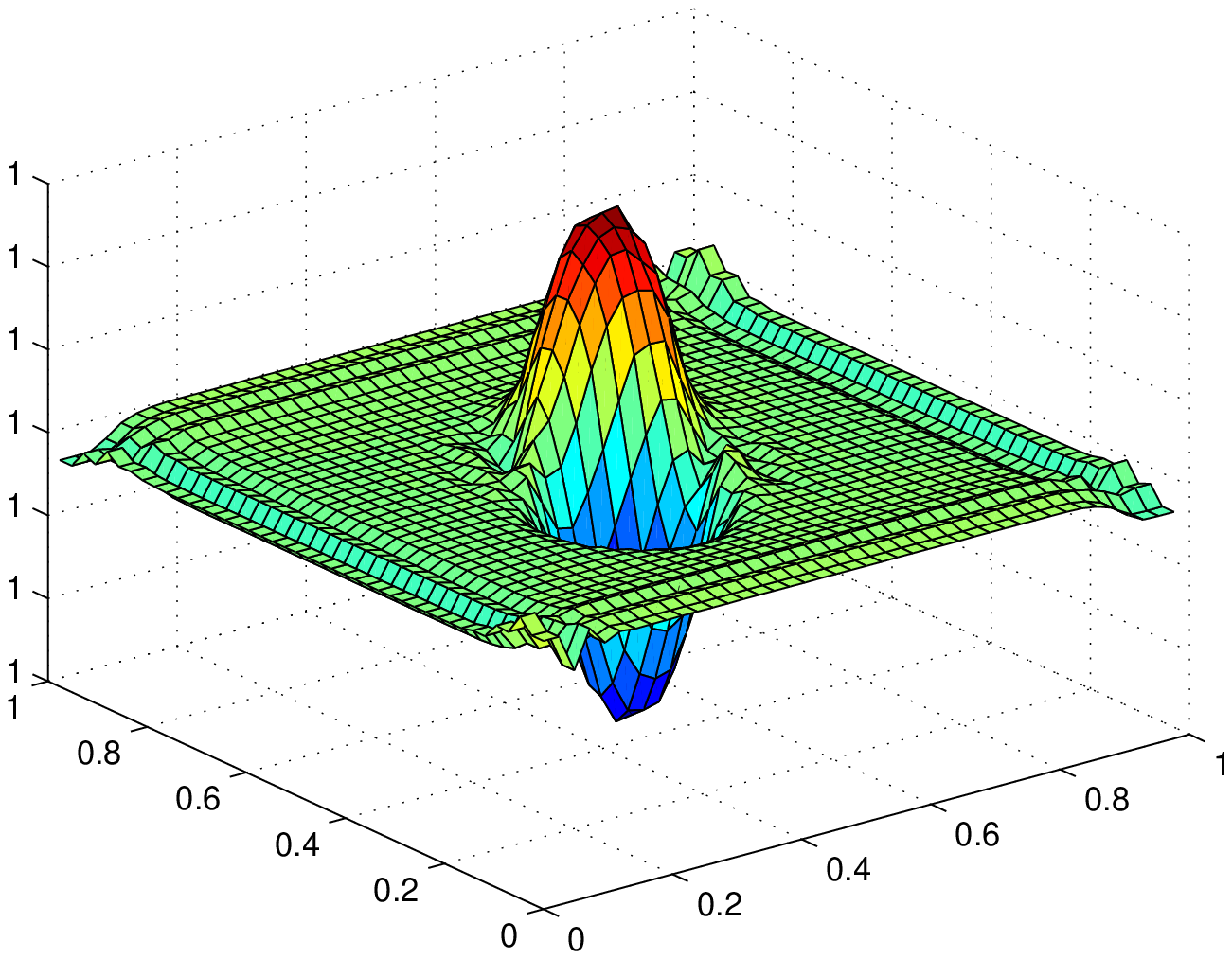}}
\end{minipage}\hfill% 
\begin{minipage}[c]{0.45\textwidth}
\subfigure[$nu_y$ (classical scheme).\label{uyclassres}]{
\includegraphics[width=\textwidth]{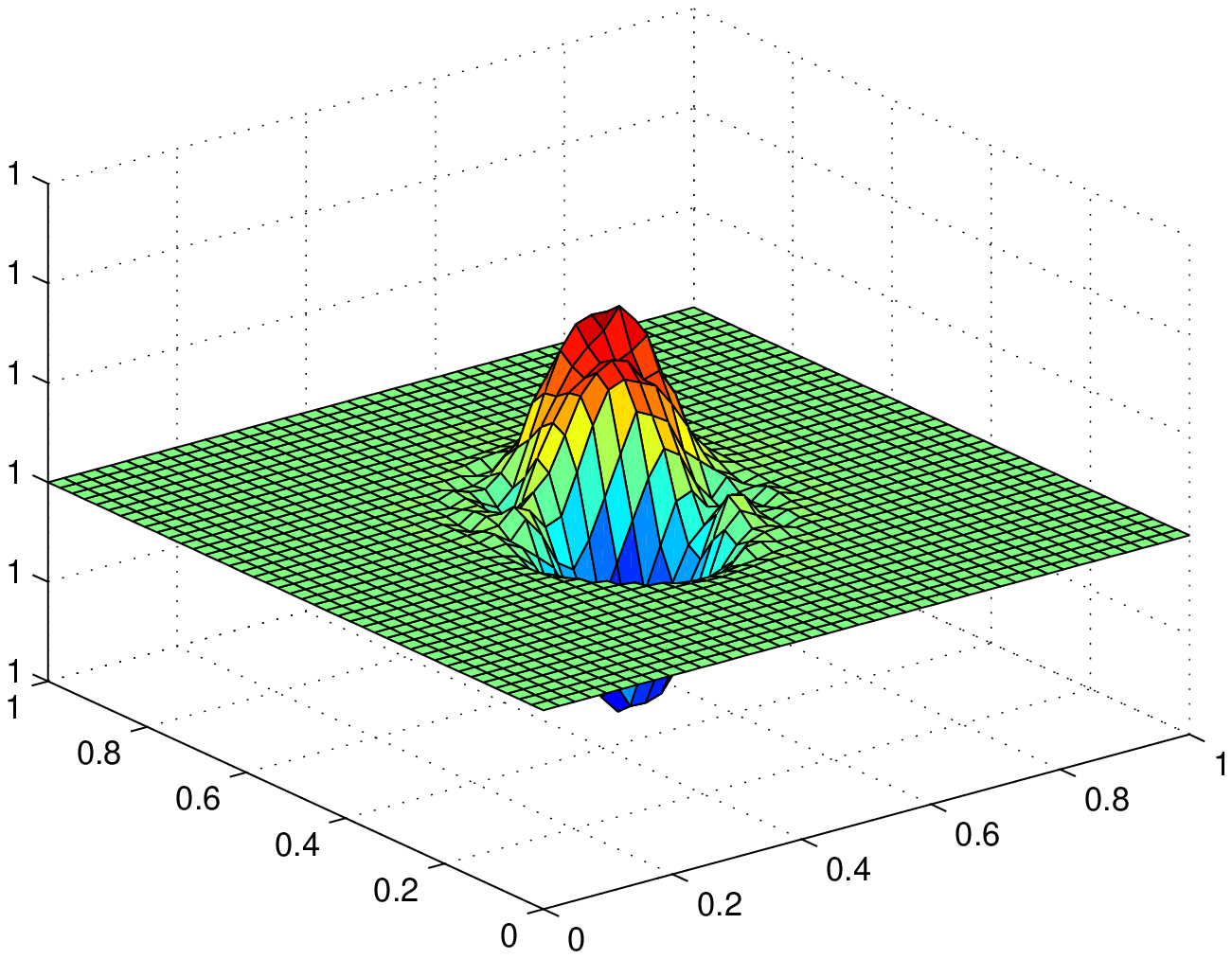}}
\end{minipage} 
\caption{Euler-Lorentz test case for an oblique magnetic field in the
  resolved case at time $t=3.95 \; 10^{-6}$ s:
  density ($n$) and momentum ($nu_x$, $nu_y$) computed by the
  AP-scheme (left) and the classical scheme (right) for $\varepsilon =
  10^{-9}$ and $\Delta x = \Delta y = 1/40$. The angle of the magnetic
field with the $x$-axis is $\alpha=\pi/3$.}\label{fig:EL:oblique:resolved}
\end{center}
\end{figure}%%
Both schemes provide comparable results. However we observe the
formation of a thin boundary layer on the domain frontiers for the AP-scheme but it is not
responsible for the development of an instability. 

Next we consider the same test case with an under-resolved time step
$\Delta t$ which is 10 times larger than the time step provided by the CFL condition of the classical scheme. These simulation results are collected on
figure~\ref{fig:EL:oblique:unresolved}.%
\begin{figure}[!ht]
\begin{center}
\begin{minipage}[c]{0.45\textwidth}
\subfigure[$n$ (AP-scheme).\label{rhoapnonres}]{
\includegraphics[width=\textwidth]{rhoapnonrescemracs.epsc}}
\end{minipage}\hfill% 
\begin{minipage}[c]{0.45\textwidth}
\subfigure[$n$ (classical-scheme).\label{rhoclassnonres}]{
\includegraphics[width=\textwidth]{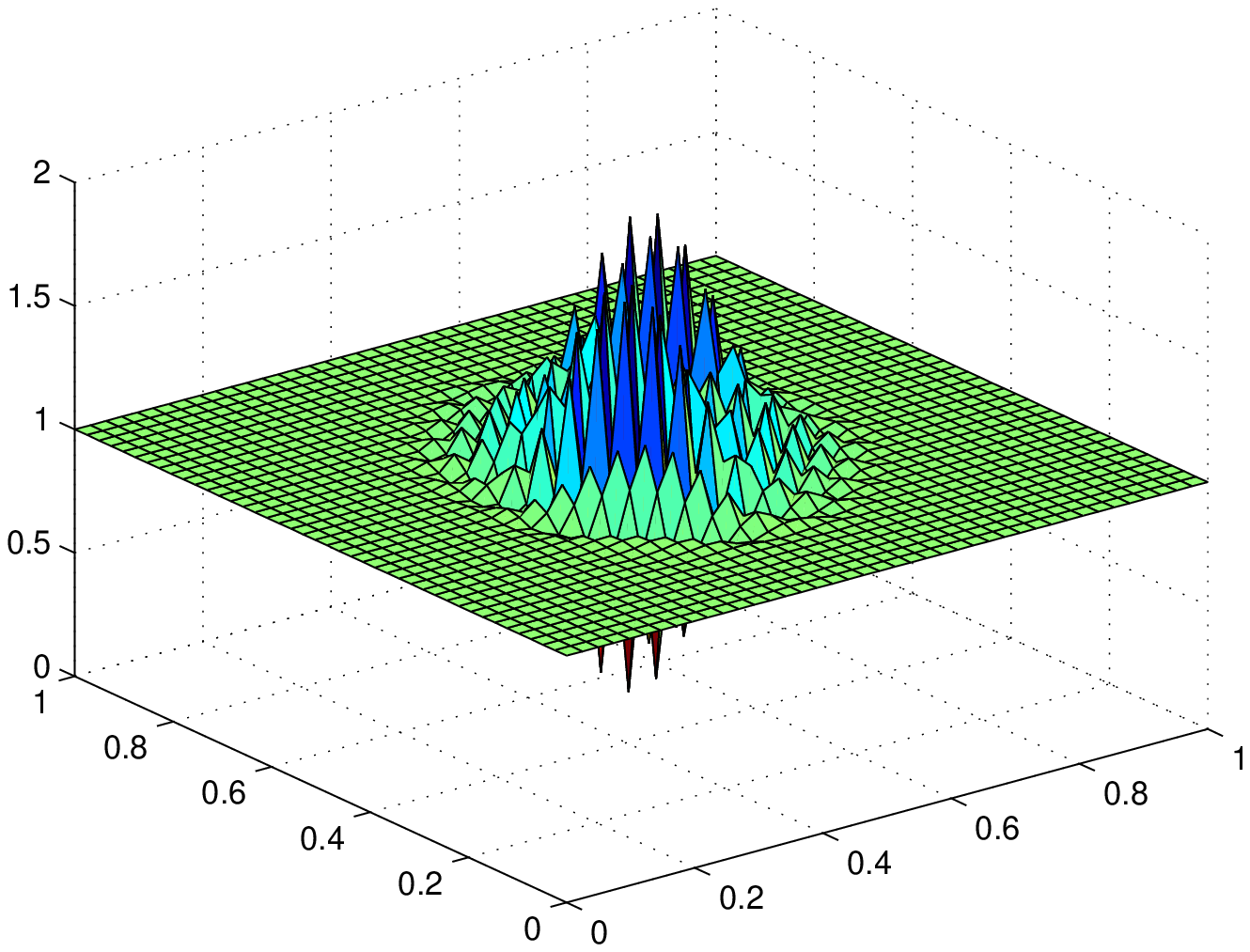}}
\end{minipage} 
\begin{minipage}[c]{0.45\textwidth}
\subfigure[$nu_x$ (AP-scheme)\label{uxapnonres}]{
\includegraphics[width=\textwidth]{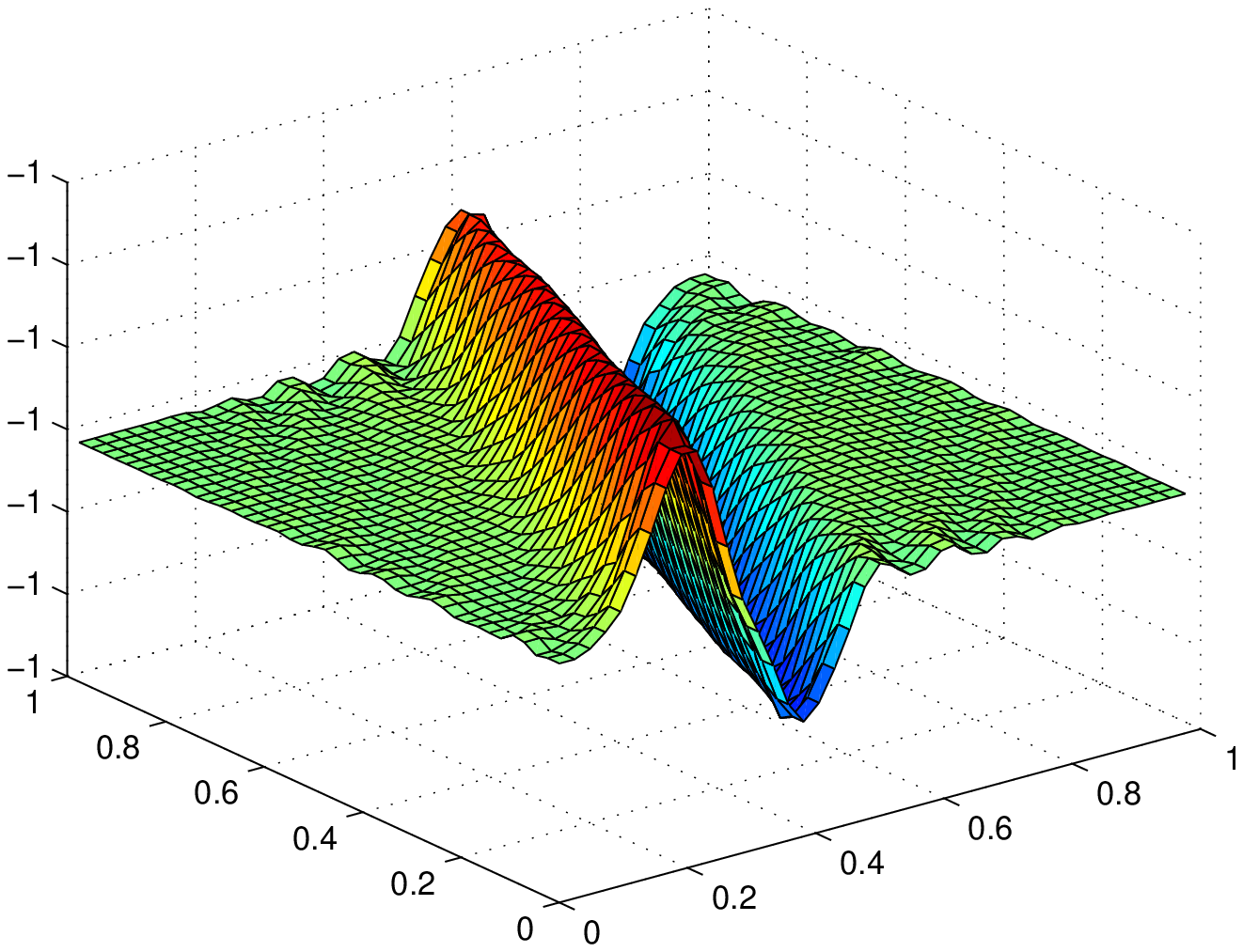}}
\end{minipage}\hfill% 
\begin{minipage}[c]{0.45\textwidth}
\subfigure[$nu_x$ (classical-scheme)\label{uxclassnonres}]{
\includegraphics[width=\textwidth]{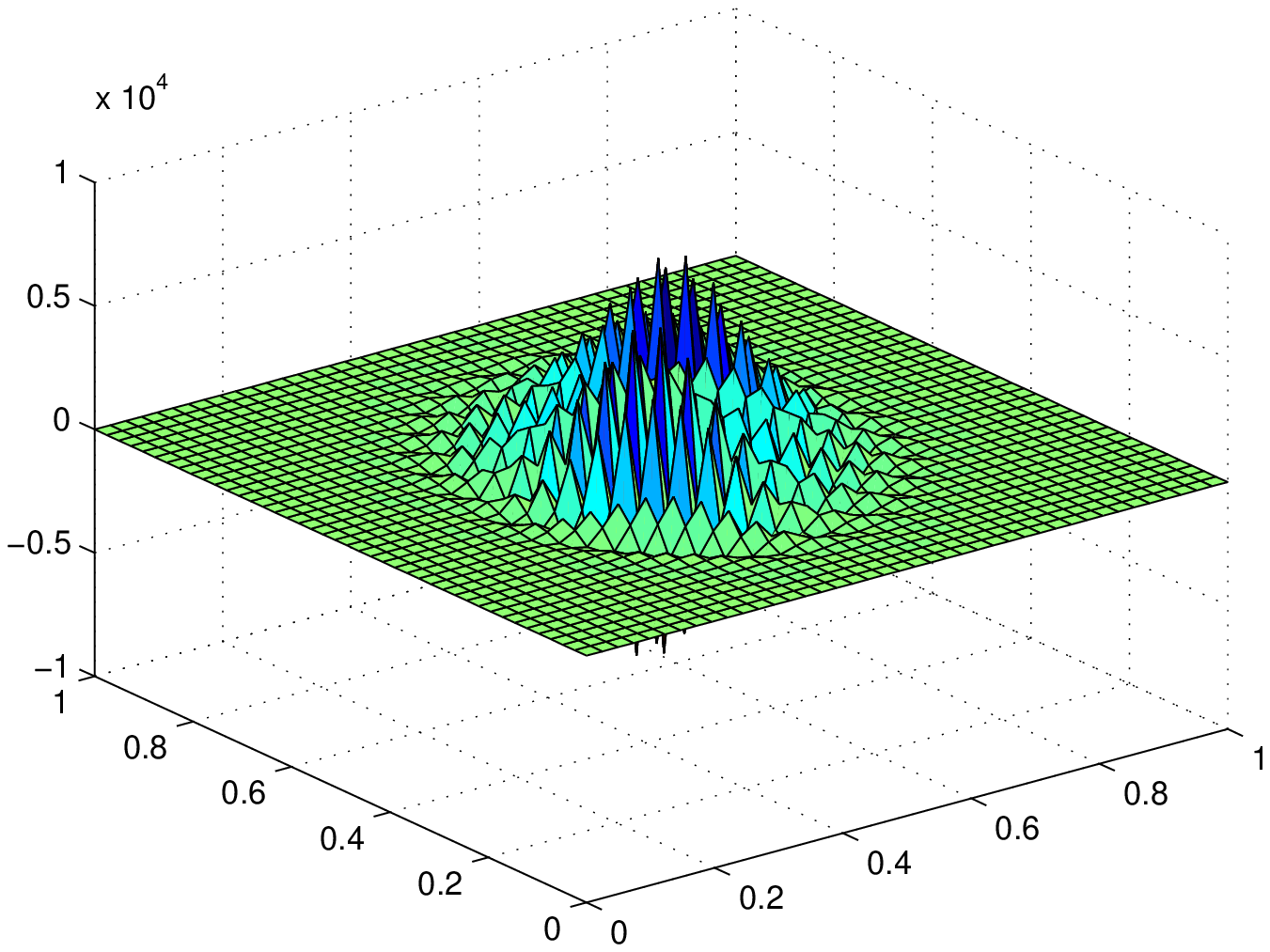}}
\end{minipage} 
\begin{minipage}[c]{0.45\textwidth}
\subfigure[$nu_y$ (AP-scheme) \label{uyapnonres}]{
\includegraphics[width=\textwidth]{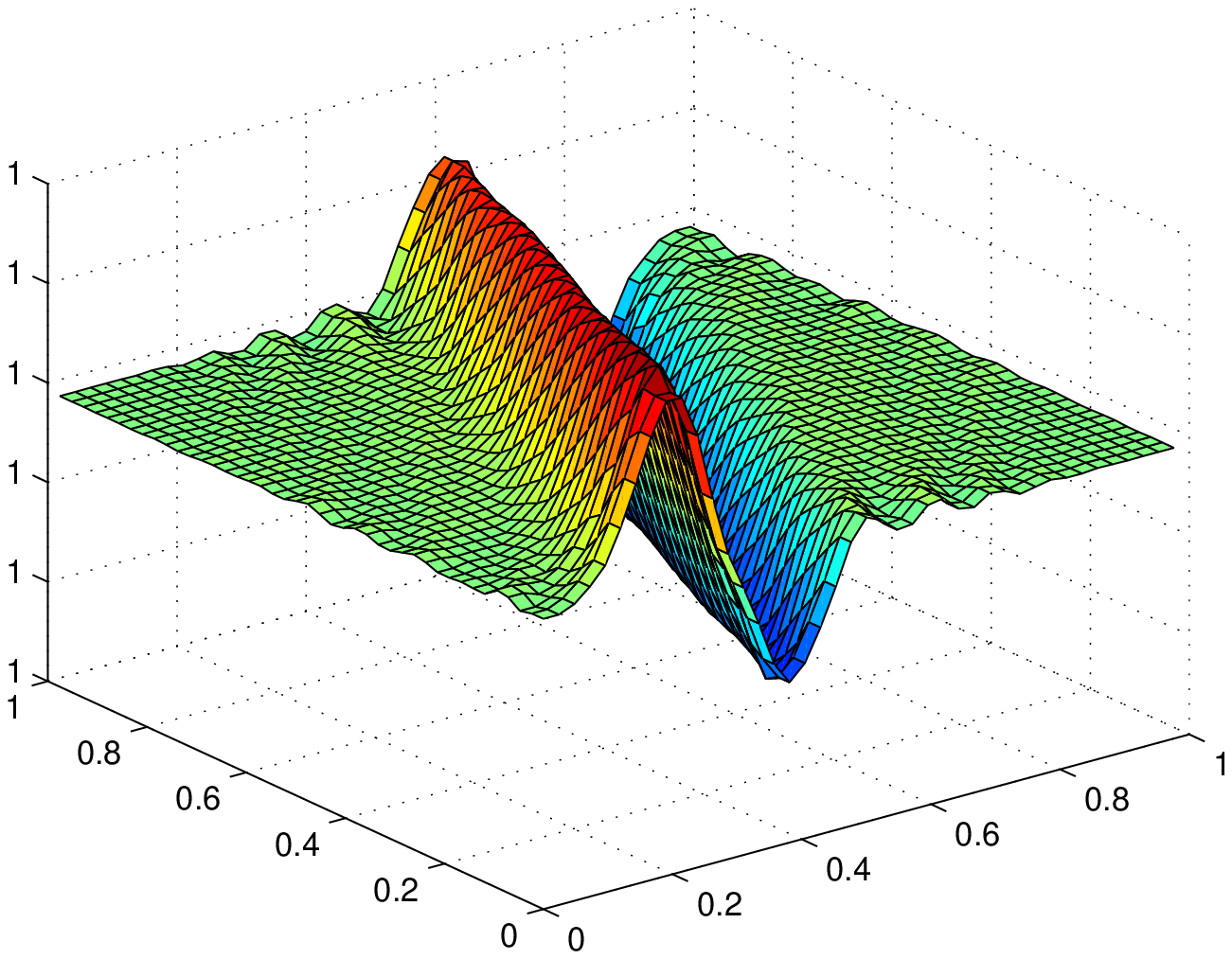}}
\end{minipage}\hfill% 
\begin{minipage}[c]{0.45\textwidth}
\subfigure[$nu_y$ (classical-scheme) \label{uyclassnonres}]{
\includegraphics[width=\textwidth]{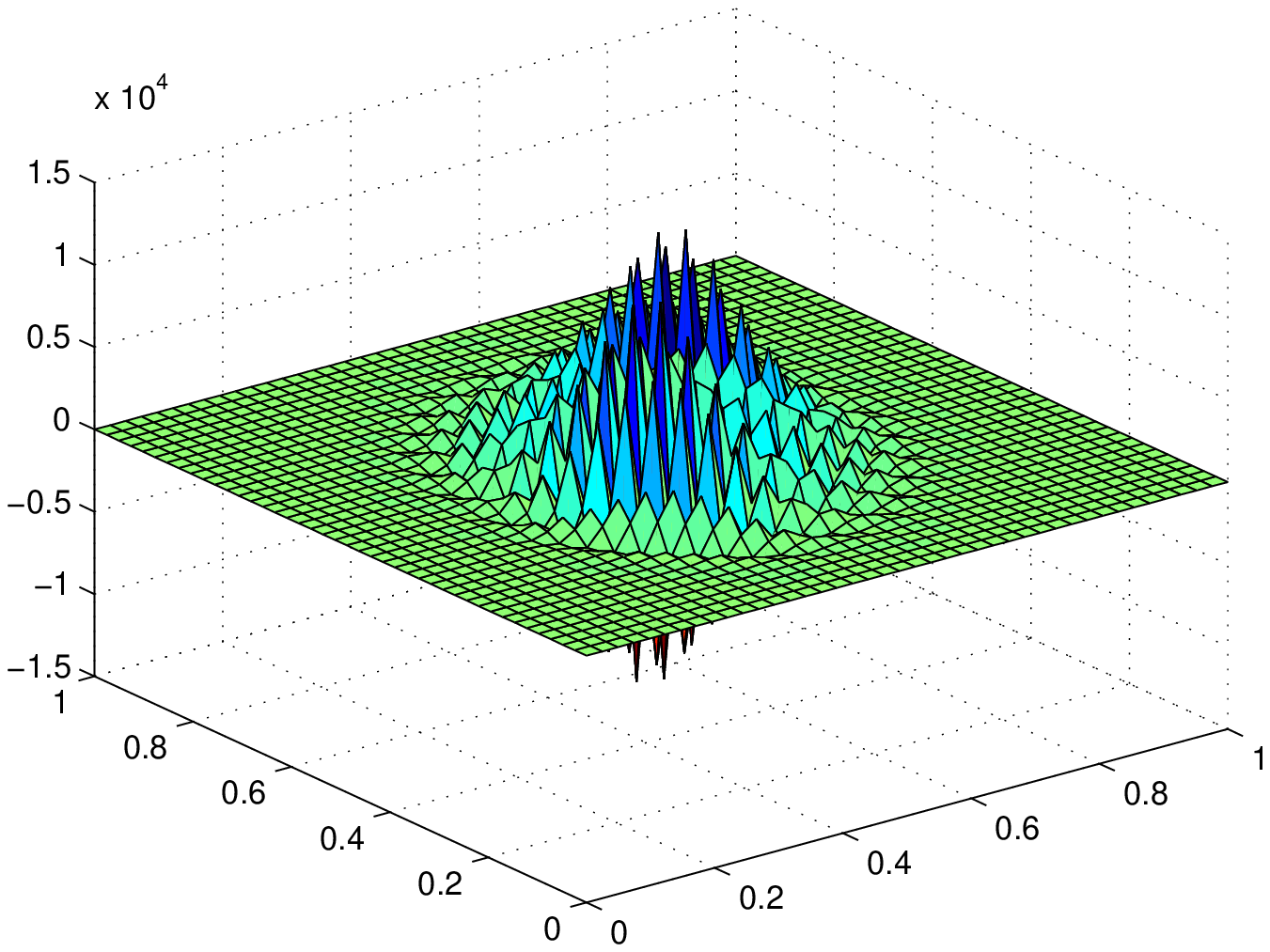}}
\end{minipage} 
\caption{Euler-Lorentz test case for an oblique magnetic field in the
   under-resolved case at time $3.95 \, 10^{-5}$ s:
  density ($n$) and momentum ($nu_x$, $nu_y$) computed by the
  AP-scheme (left) and the classical scheme (right) for $\varepsilon =
  10^{-9}$ and $\Delta x = \Delta y = 1/40$. The angle of the magnetic
field with the $x$-axis is $\alpha=\pi/3$.}
\label{fig:EL:oblique:unresolved}
\end{center}
\end{figure}
In this case, the conventional
scheme leads to unstable results contrary to the AP scheme and
proves the capability of the AP-scheme to provide stable computations
for time steps that resolve neither the acoustic wave-speed nor the
gyration period.

\subsection{Numerical results for a non uniform magnetic field}
\label{subsec_num_nonuniform}

For the non uniform case, $n=1$, $(nu)_x=1$, $(nu)_y=-1$ and
$(nu)_z=0$ are not stationary
solutions to the Euler-Lorentz system.
In particular, with the chosen initial condition, sharp boundary layers are generated. 
 But the the AP scheme can still
be compared with the classical scheme in the resolved case for a
validation procedure. Then we take the same initial conditions as for
the oblique magnetic field case. Figures (\ref{rhoapresvar},
\ref{rhoclassresvar}, \ref{uxapresvar}, \ref{uxclassresvar},
\ref{uyapresvar}, \ref{uyclassresvar}) show that the two schemes provide similar results. 

Next we
consider the under-resolved time step $10 \Delta t$. In this situation  
Fig. \ref{rhoclassnonresvar}, \ref{uxclassnonresvar},
\ref{uyclassnonresvar} show that the classical scheme is unstable. By contrast, Fig.
\ref{rhoapnonresvar}, \ref{uxapnonresvar}, \ref{uyapnonresvar} demonstrate that the AP-scheme provides stable results. The increased numerical diffusion generated by the large time step gives rise to a widening of the boundary layer. Keeping the boundary layer accurate would require some mesh refinment in the vicinity of the boundary. This point is deferred to future work. 
\begin{figure}[!ht]
\begin{center}
\begin{minipage}[c]{0.45\textwidth}
\subfigure[$n$ (classical-scheme) \label{rhoclassresvar}]{
\includegraphics[width=\textwidth]{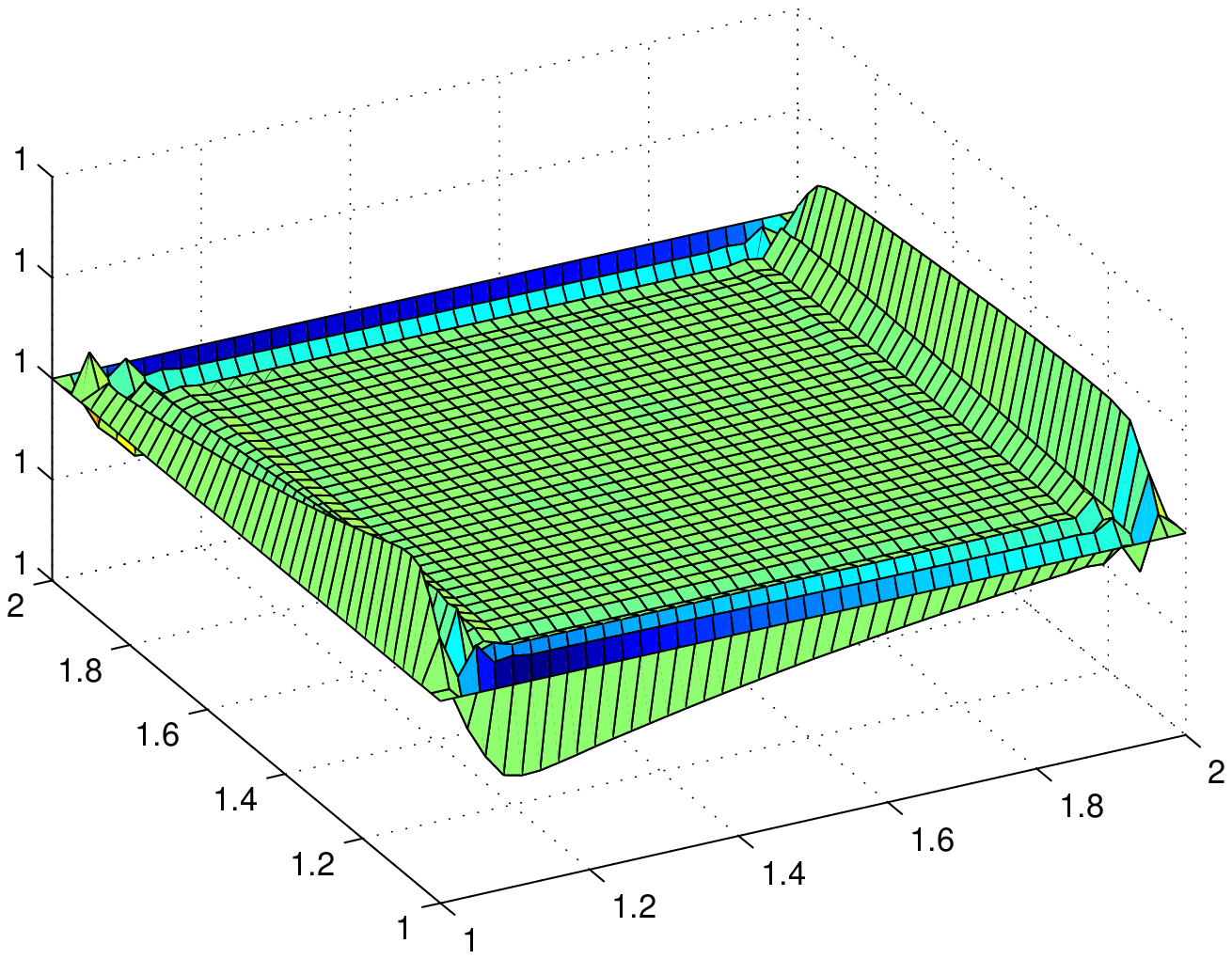}}
\end{minipage}\hfill% 
\begin{minipage}[c]{0.45\textwidth}
\subfigure[$n$ (AP-scheme) \label{rhoapresvar}]{
\includegraphics[width=\textwidth]{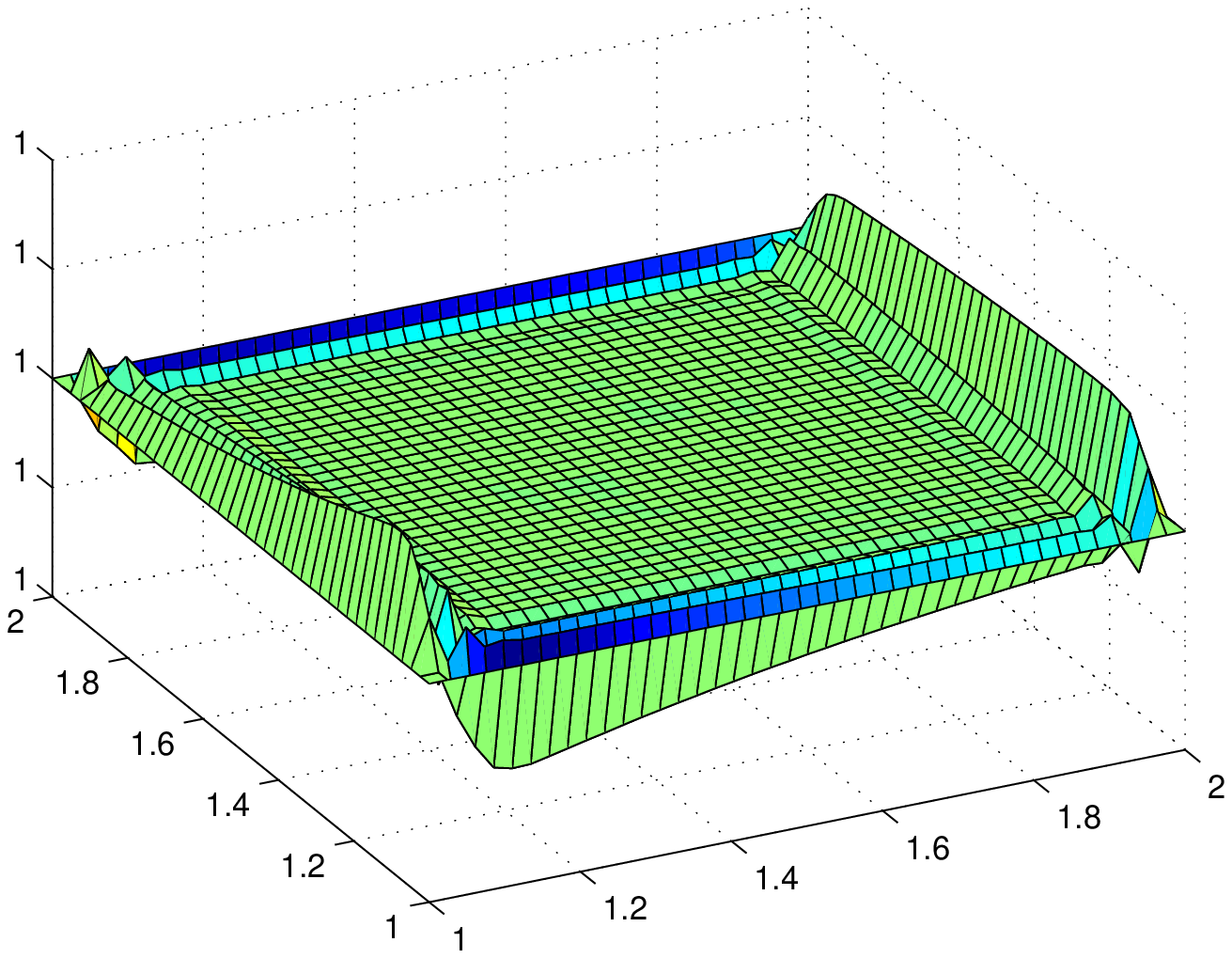}}
\end{minipage} 
\begin{minipage}[c]{0.45\textwidth}
\subfigure[$nu_x$ (classical-scheme) \label{uxclassresvar}]{
\includegraphics[width=\textwidth]{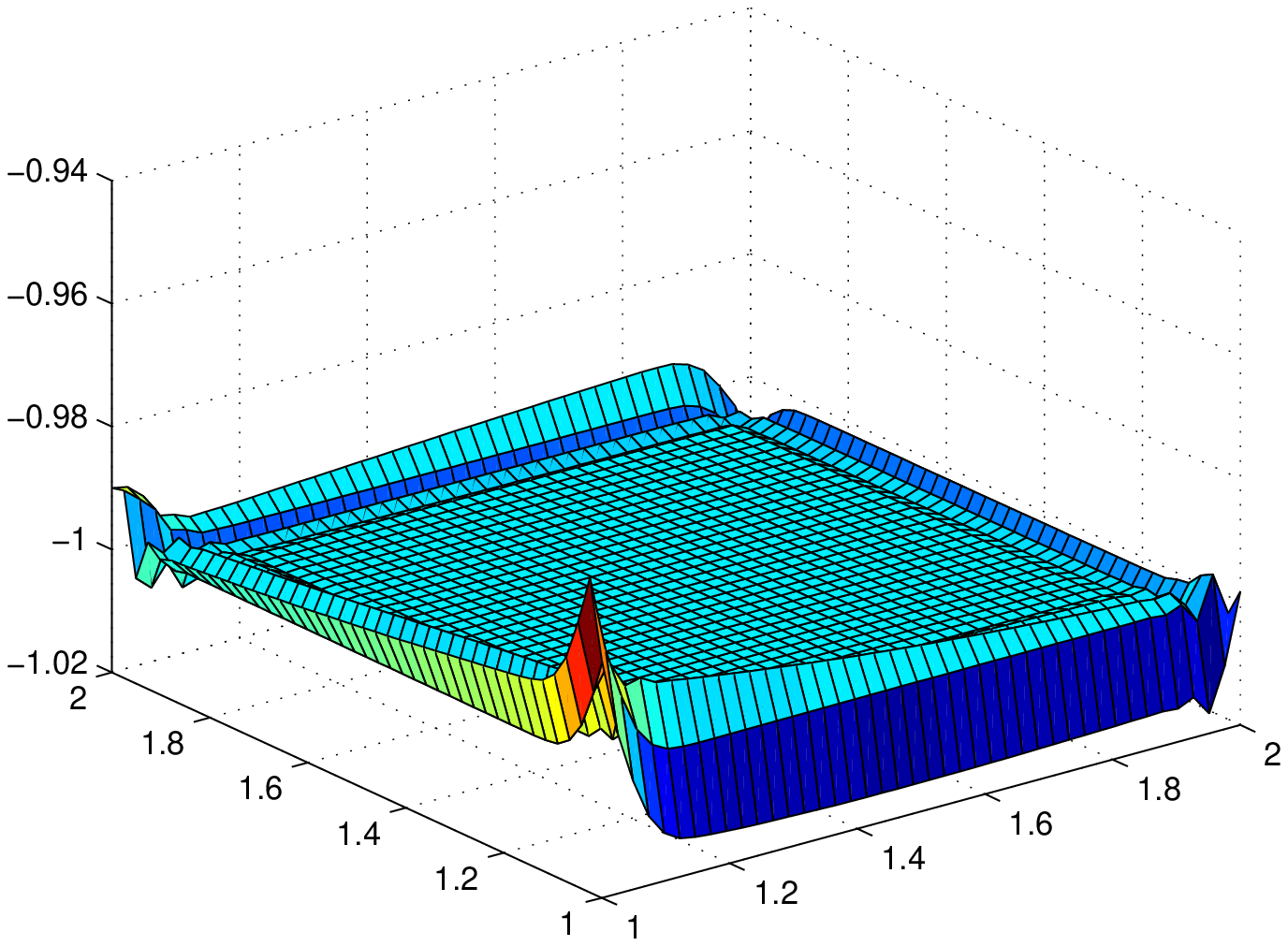}}
\end{minipage}\hfill% 
\begin{minipage}[c]{0.45\textwidth}
\subfigure[$nu_x$ (AP-scheme) \label{uxapresvar}]{
\includegraphics[width=\textwidth]{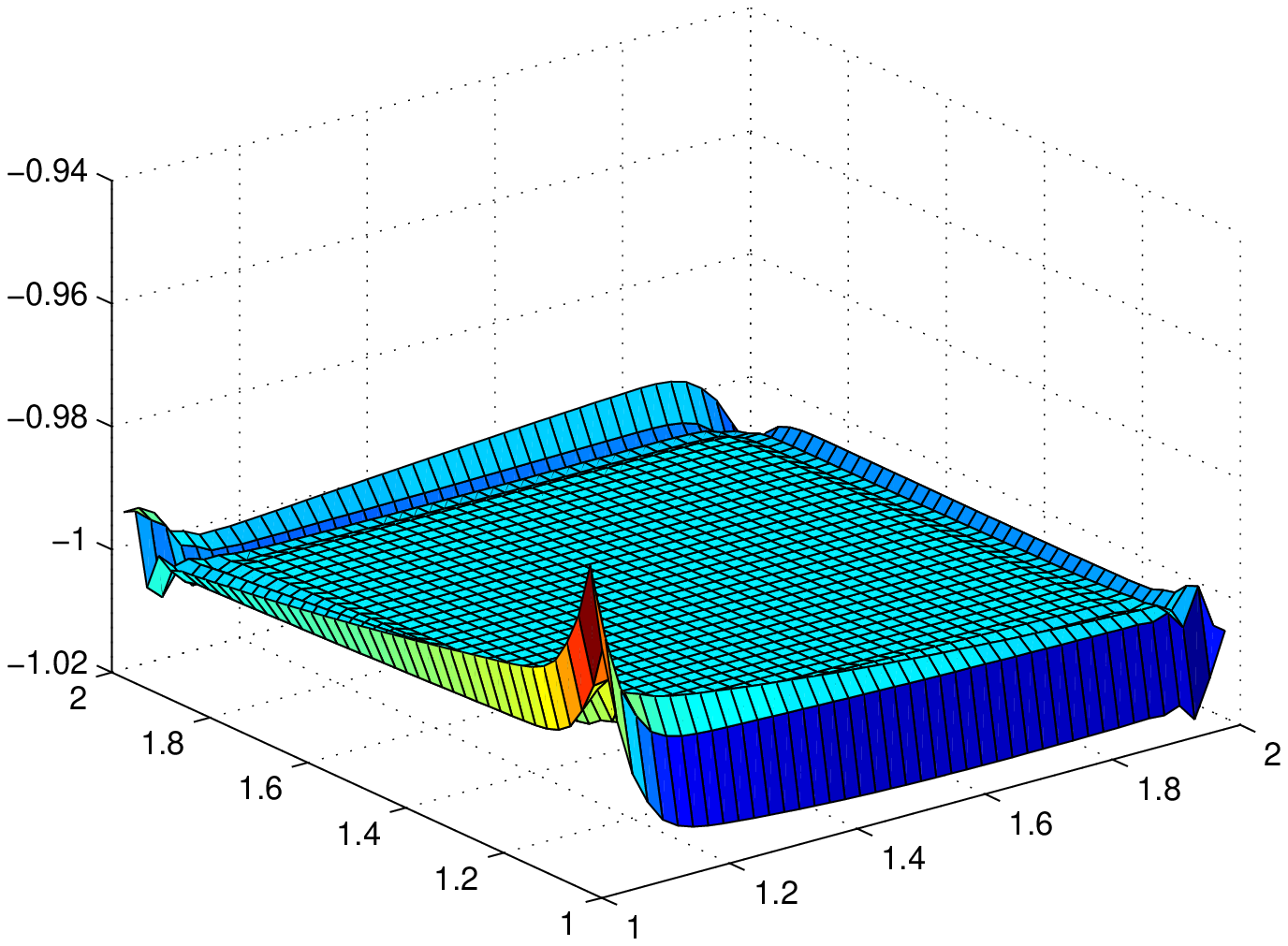}}
\end{minipage} 
\begin{minipage}[c]{0.45\textwidth}
\subfigure[$nu_y$ (classical-scheme) \label{uyclassresvar}]{
\includegraphics[width=\textwidth]{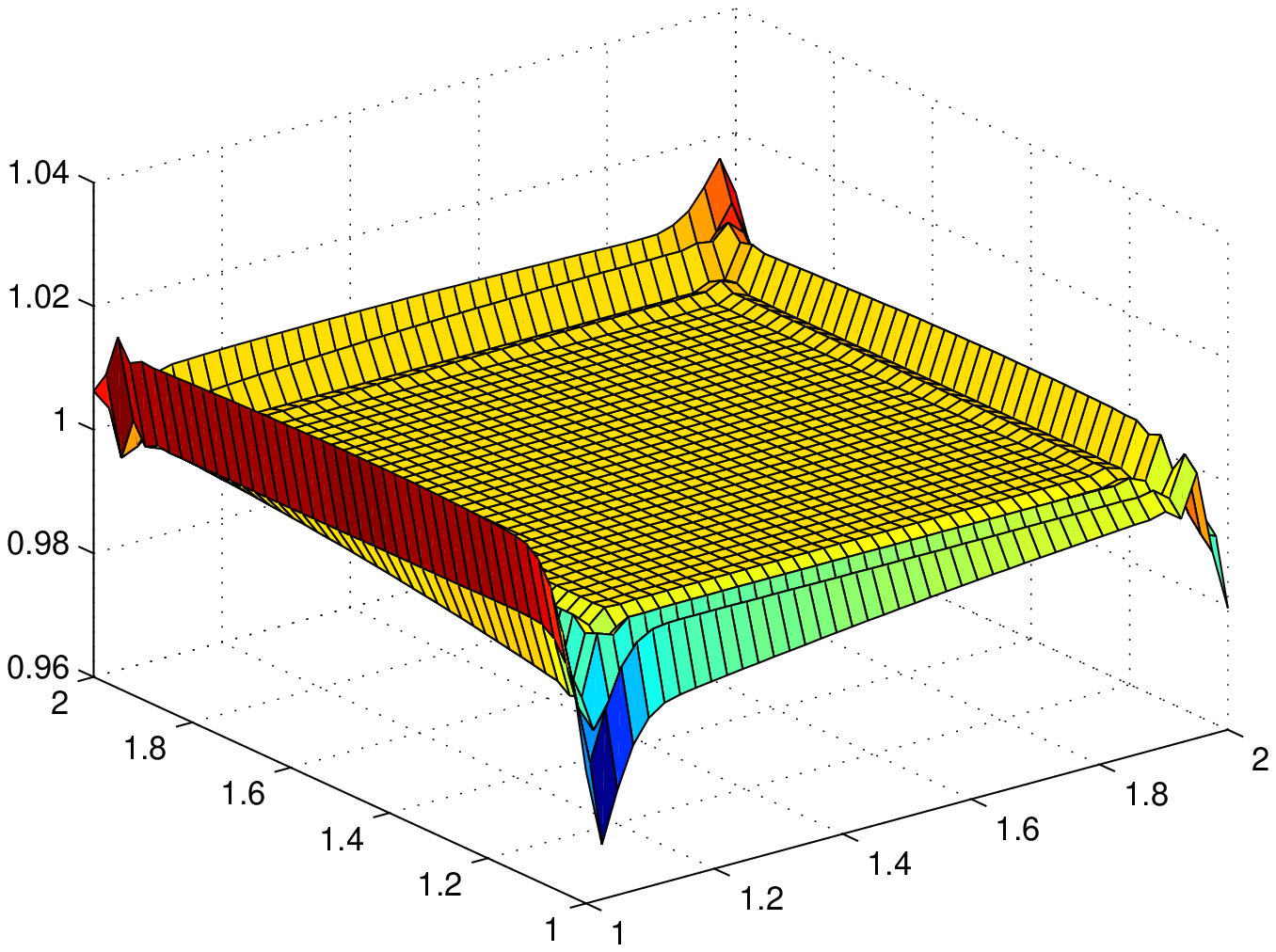}}
\end{minipage}\hfill% 
\begin{minipage}[c]{0.45\textwidth}
\subfigure[$nu_y$ (AP-scheme) \label{uyapresvar}]{
\includegraphics[width=\textwidth]{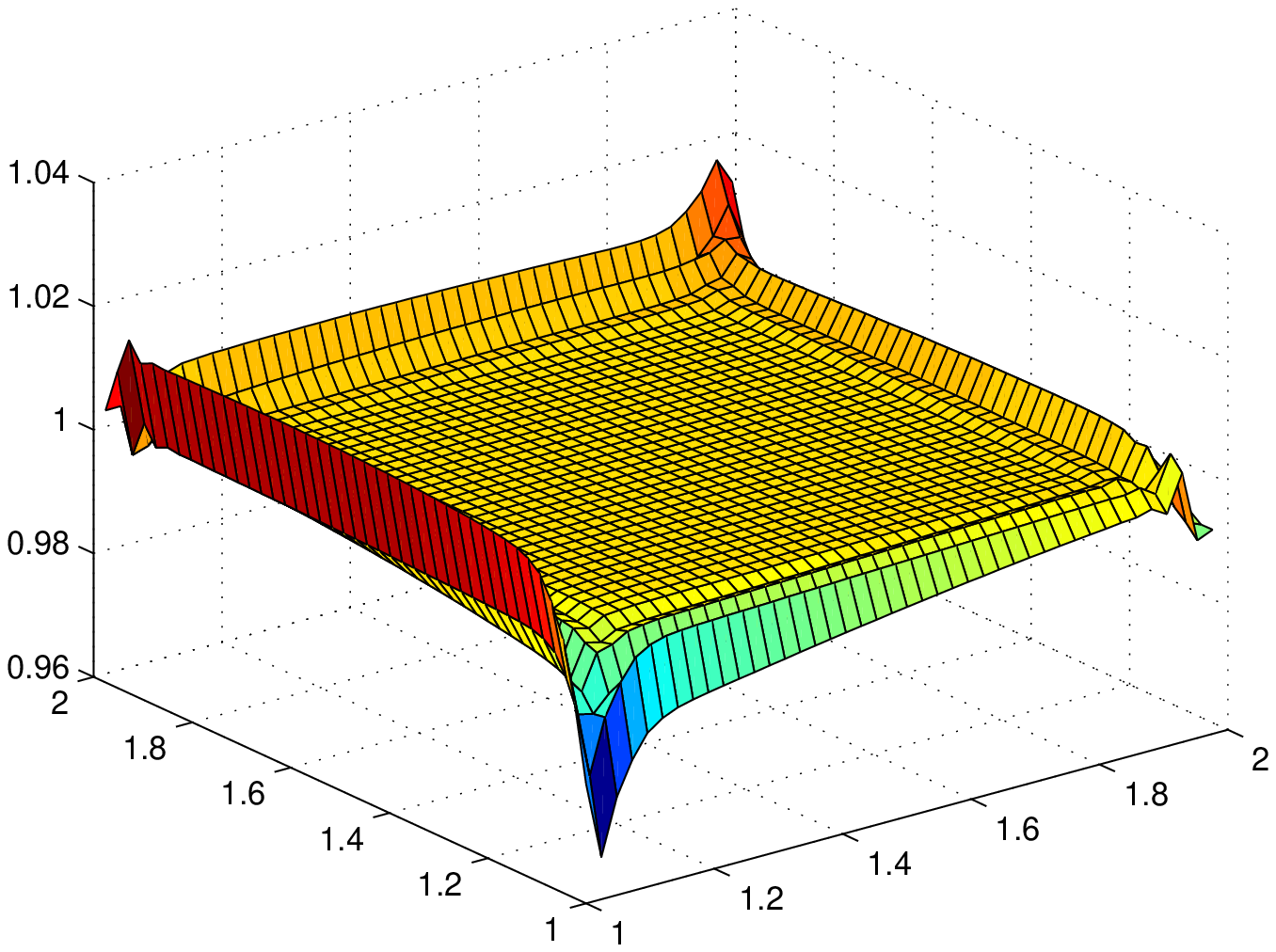}}
\end{minipage} 
\caption{Euler-Lorentz test case for a non uniform magnetic field in
  the resolved case at time $t=3.95 \; 10^{-6}$ s:
  density ($n$) and momentum ($nu_x$, $nu_y$) computed by the
  AP-scheme (right) and the classical scheme (left) for $\varepsilon =
  10^{-9}$ and $\Delta x = \Delta y = 1/40$.}
\end{center}
\end{figure}

\begin{figure}[!ht]
\begin{center}
\begin{minipage}[c]{0.45\textwidth}
\subfigure[$n$ (classical-scheme) \label{rhoclassnonresvar}]{
\includegraphics[width=\textwidth]{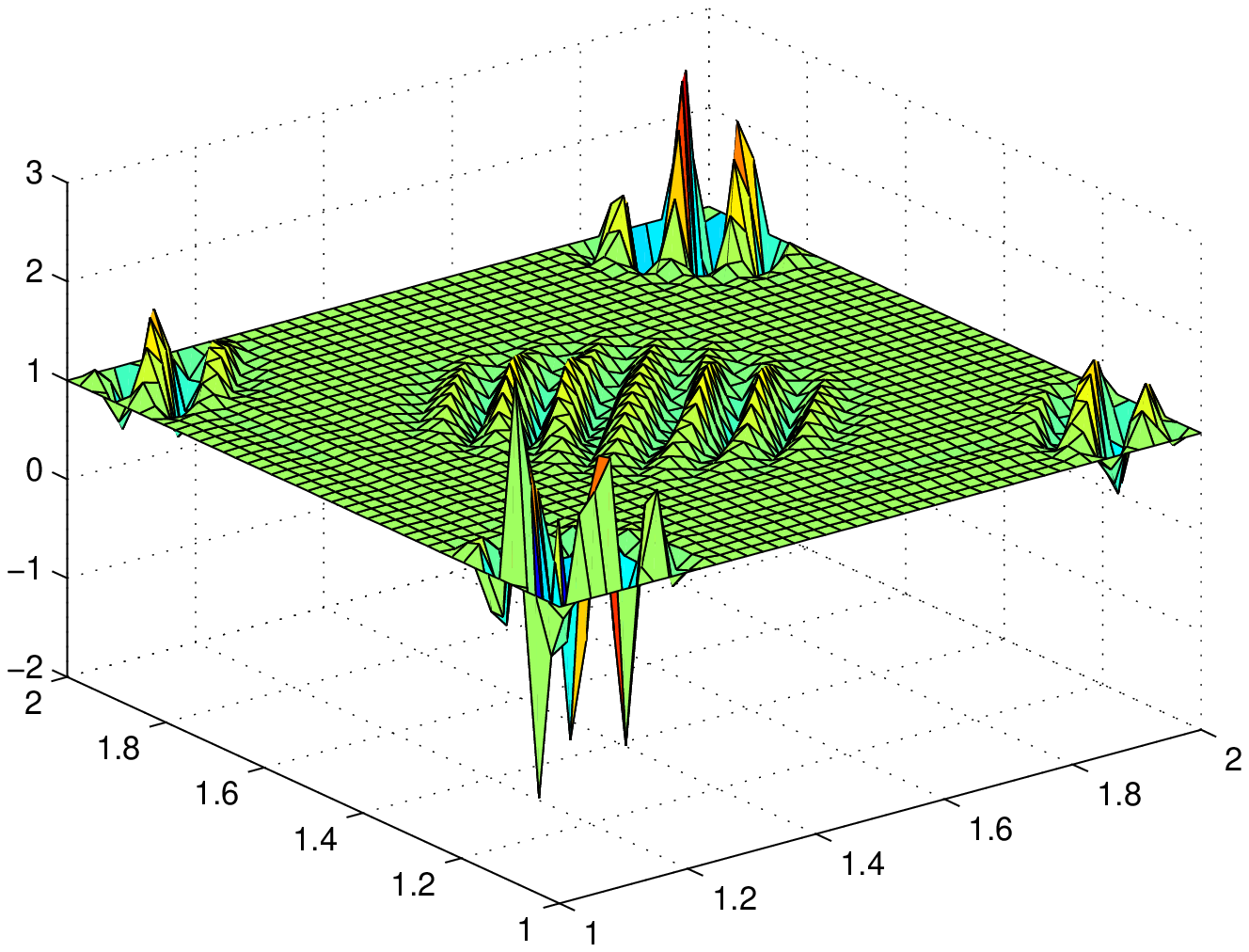}}
\end{minipage}\hfill% 
\begin{minipage}[c]{0.45\textwidth}
\subfigure[$n$ (AP-scheme) \label{rhoapnonresvar}]{
\includegraphics[width=\textwidth]{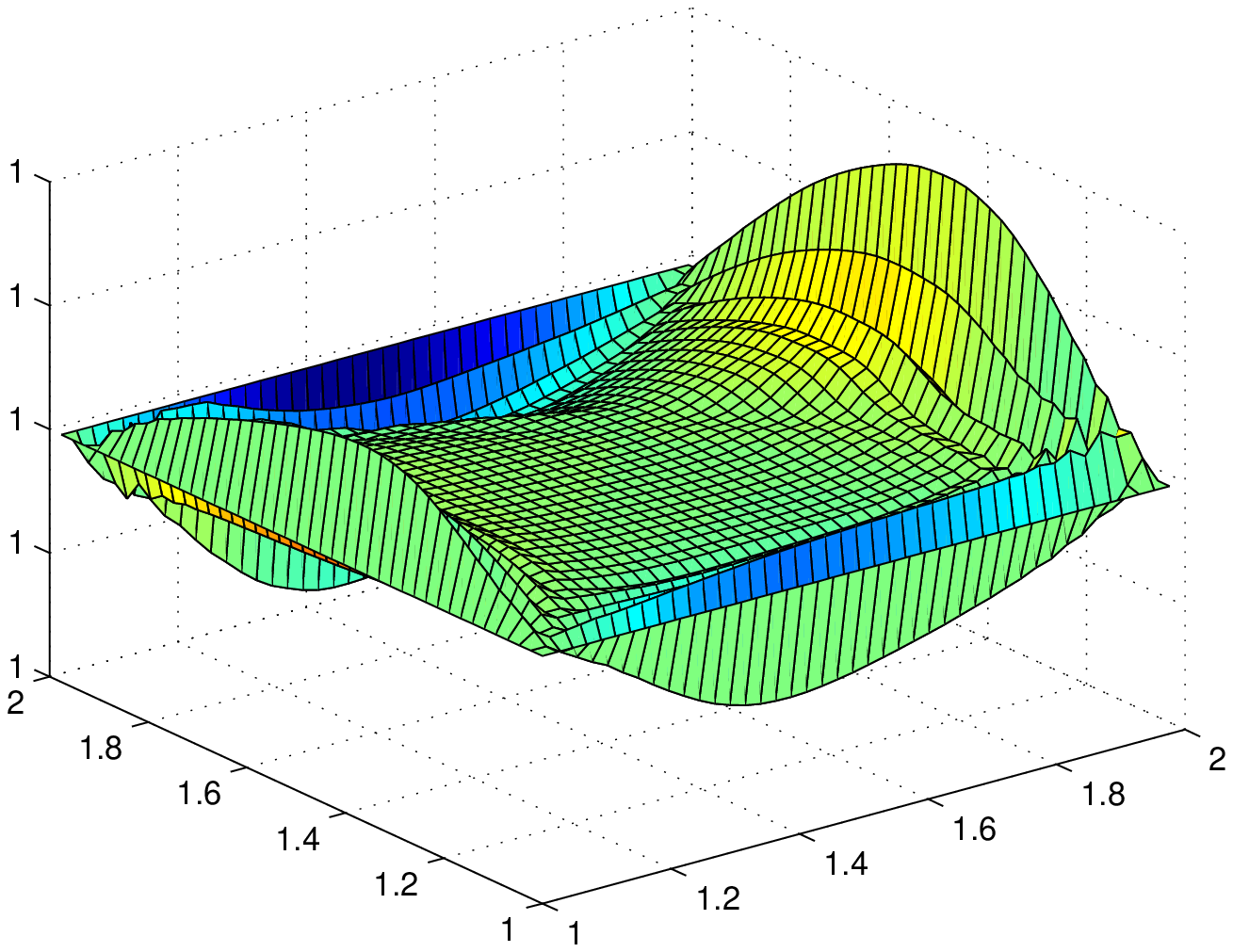}}
\end{minipage} 
\begin{minipage}[c]{0.45\textwidth}
\subfigure[$nu_x$ (classical-scheme) \label{uxclassnonresvar}]{
\includegraphics[width=\textwidth]{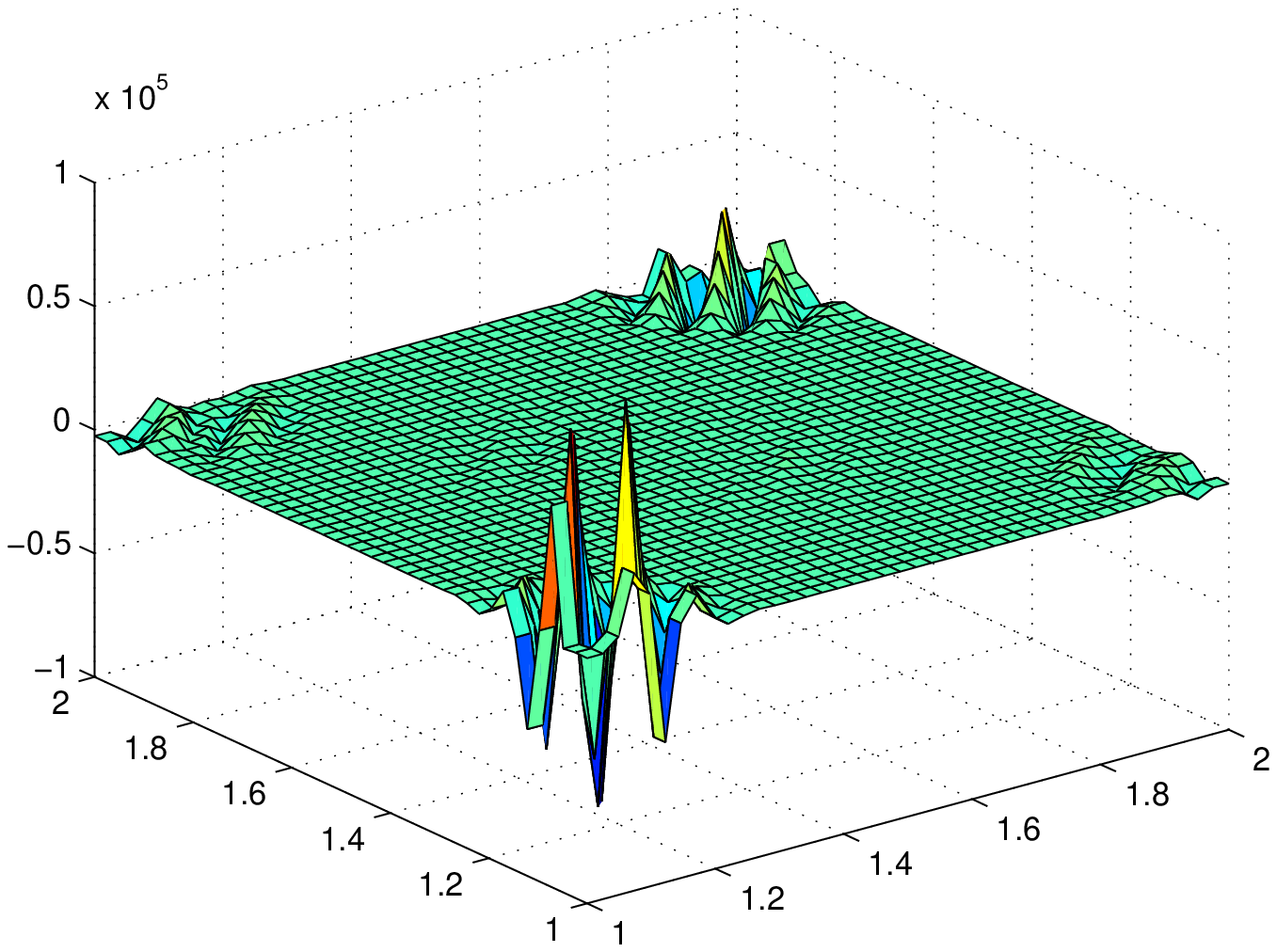}}
\end{minipage}\hfill% 
\begin{minipage}[c]{0.45\textwidth}
\subfigure[$nu_x$ (AP-scheme) \label{uxapnonresvar}]{
\includegraphics[width=\textwidth]{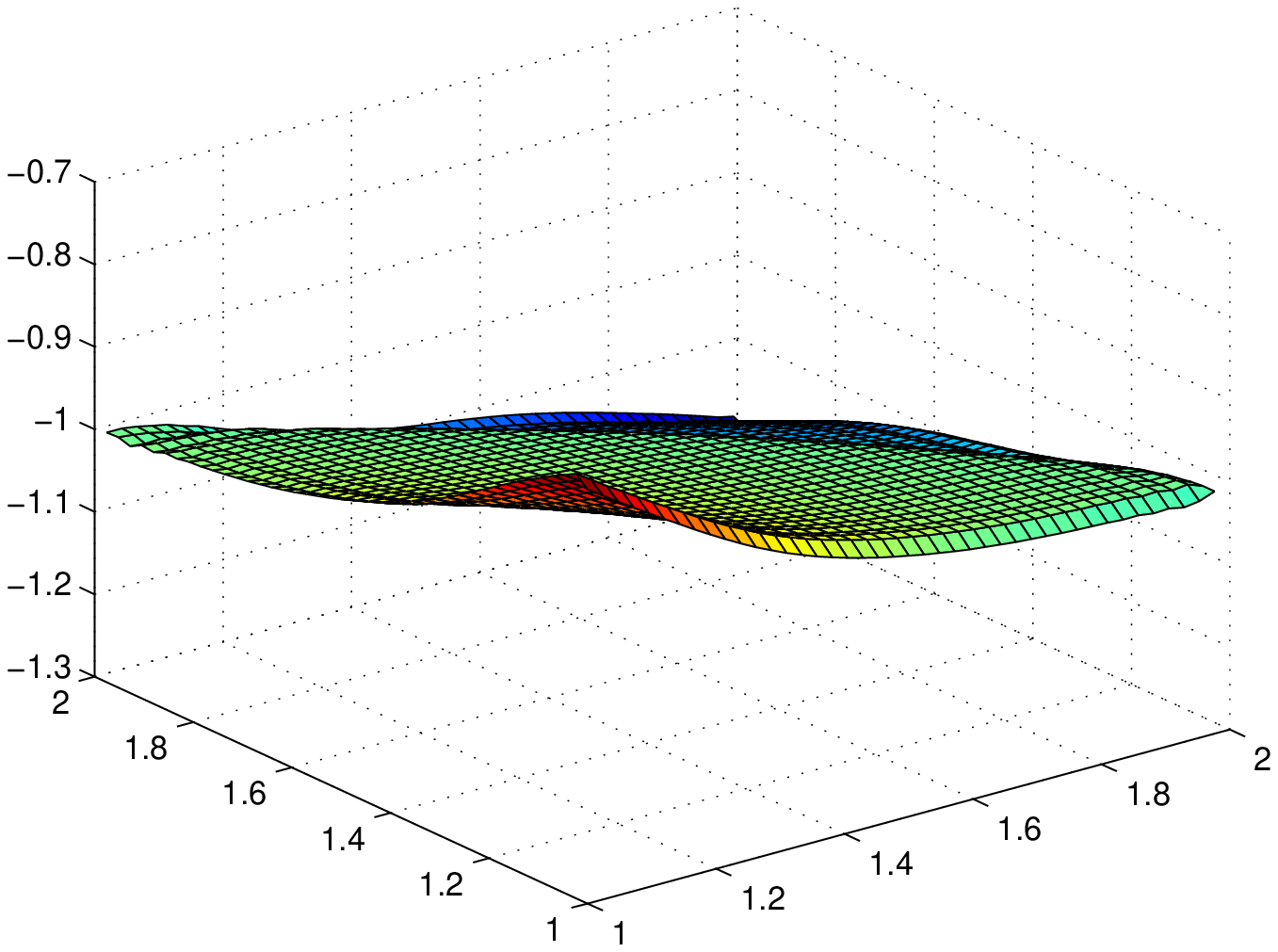}}
\end{minipage} 
\begin{minipage}[c]{0.45\textwidth}
\subfigure[$nu_y$ (classical-scheme) \label{uyclassnonresvar}]{
\includegraphics[width=\textwidth]{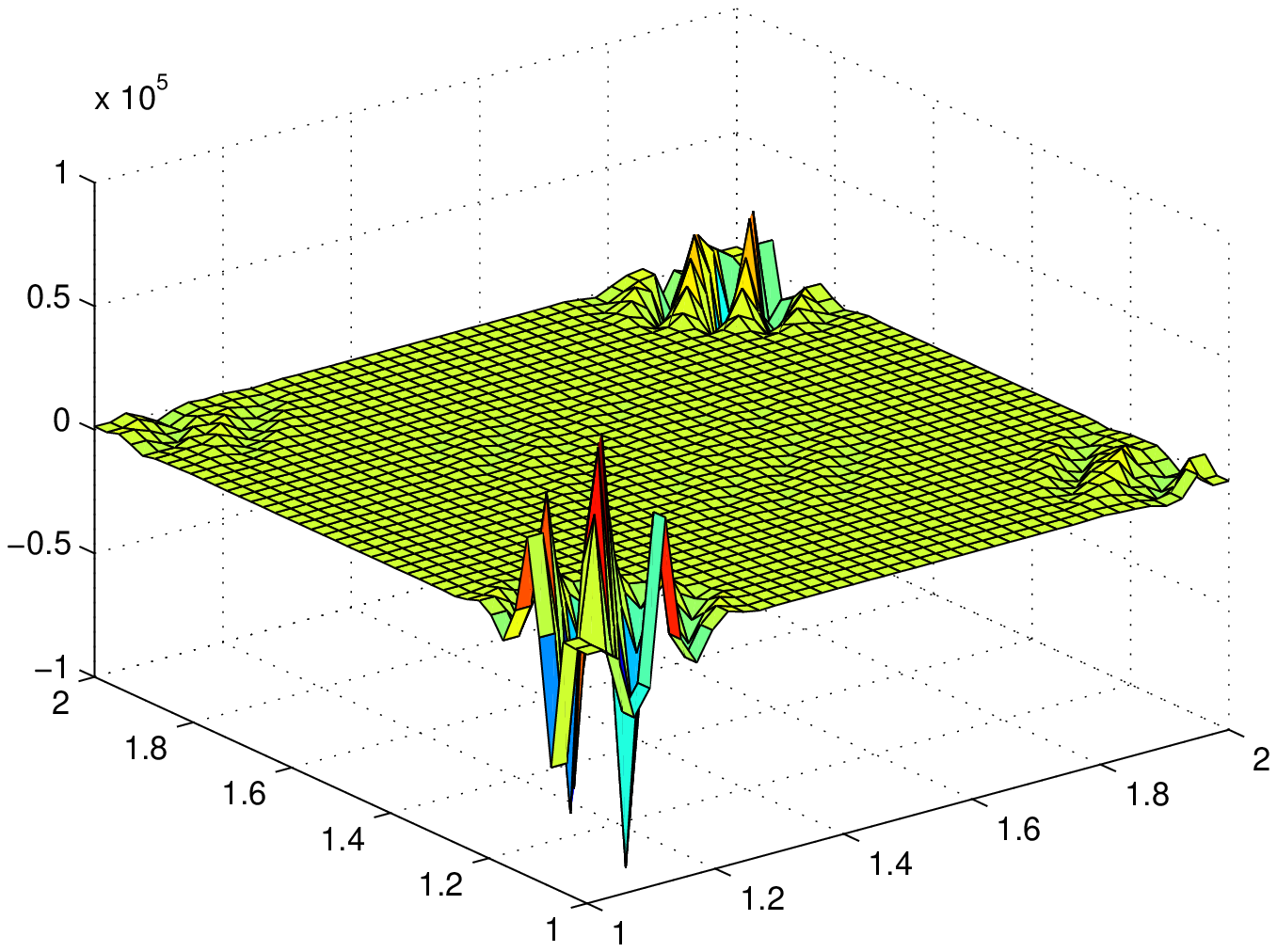}}
\end{minipage}\hfill% 
\begin{minipage}[c]{0.45\textwidth}
\subfigure[$nu_y$ (AP-scheme) \label{uyapnonresvar}]{
\includegraphics[width=\textwidth]{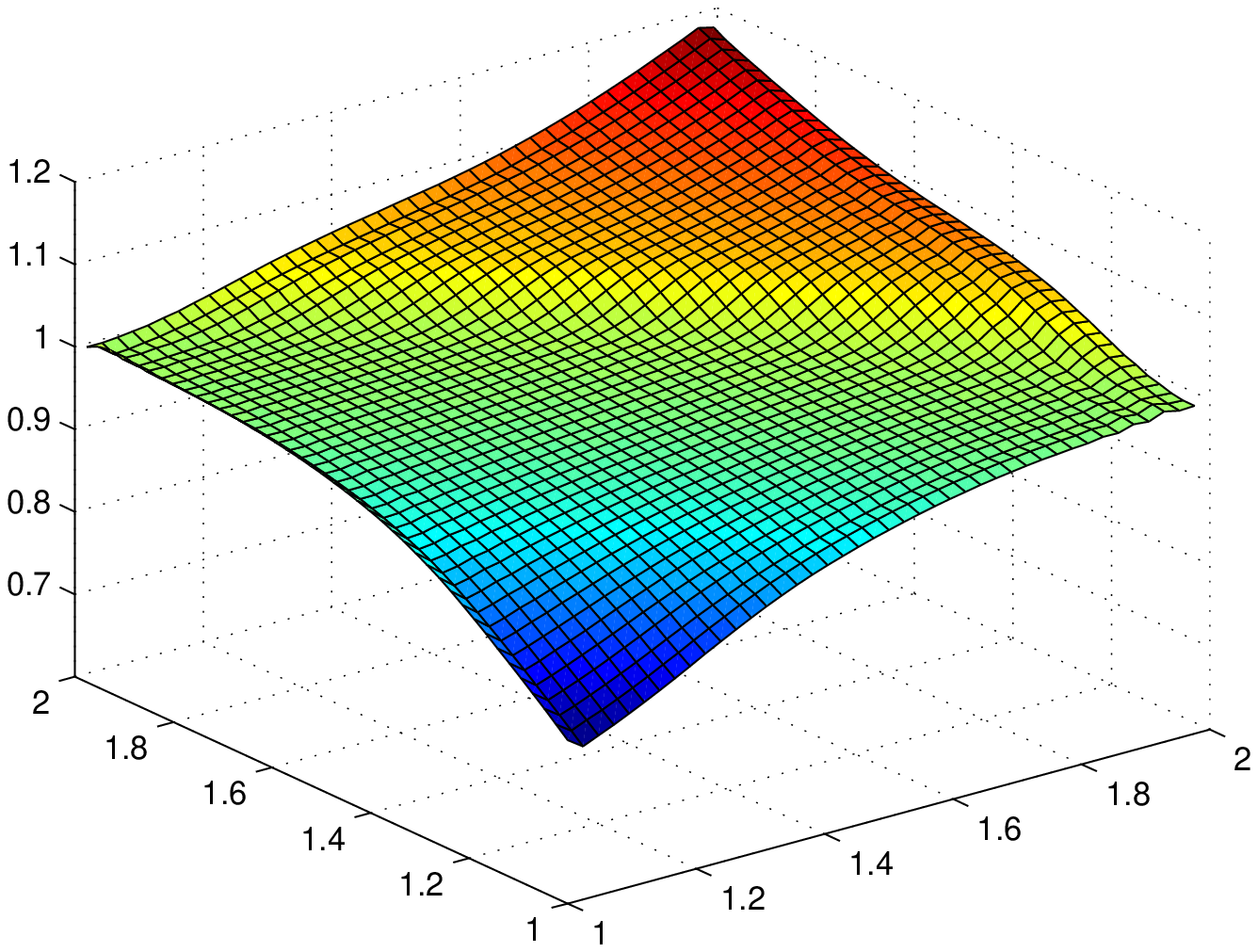}}
\end{minipage} 
\caption{Euler-Lorentz test case for a non uniform magnetic field in the
   under-resolved case at time $t=3.95 \; 10^{-5}s$:
  density ($n$) and momentum($nu_x$, $nu_y$) computed by the
  AP-scheme (right) and the classical scheme (left) for $\varepsilon =
  10^{-9}$ and $\Delta x = \Delta y = 1/40$.}
\end{center}
\end{figure}
Moreover as the initial conditions of the present test case are not stationary
solutions of the Euler-Lorentz model, it is important to check if the results
obtained in the non resolved case by the AP scheme correspond to the
proper limit regime. So we compare the results
obtained with and without the local perturbation on the initial
conditions. The difference between the results obtained with the two
simulations remain of the same order as the perturbation of the
initial condition. Fig. (\ref{diffrho}, \ref{diffv1},
  \ref{diffv2}) present the difference between the solutions obtained 
  with the perturbed and non-perturbed initial condition, for $n$, $(nu)_x$, $(nu)_y$. 
  The figures show that this difference is actually of $10^{-10}$ for the density and 
  $10^{-6}$ for the momenta. The difference with the value of $\varepsilon = 10^{-9}$, can be explained by the accumulation of the truncation error over the simulation time.

\begin{figure}[!ht]
\begin{center}
\begin{minipage}[c]{0.45\textwidth}
\subfigure[Difference for $n$ \label{diffrho}]{
\includegraphics[width=\textwidth]{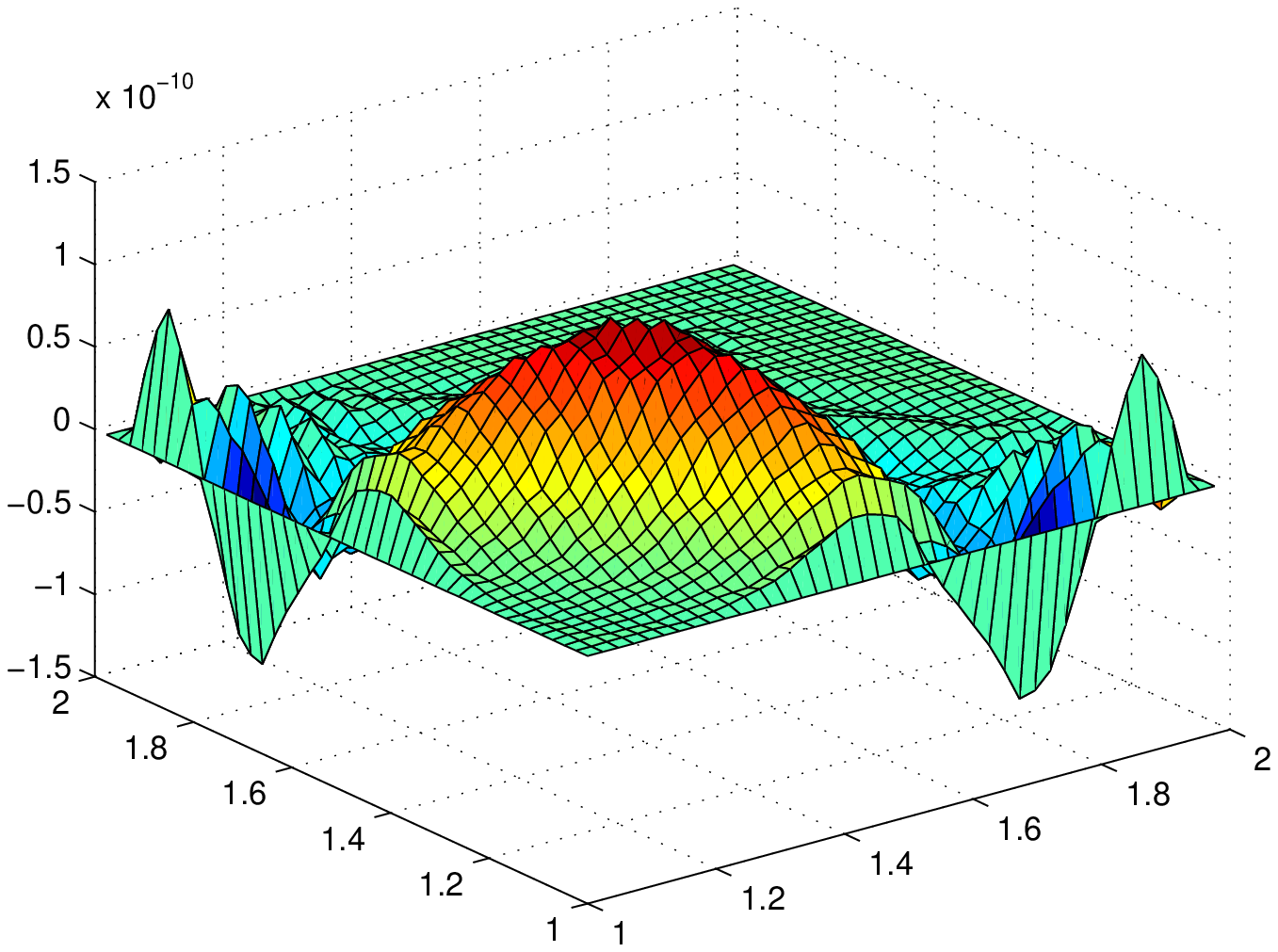}}
\end{minipage}\hfill% 
\begin{minipage}[c]{0.45\textwidth}
\subfigure[Difference for $nu_x$ \label{diffv1}]{
\includegraphics[width=\textwidth]{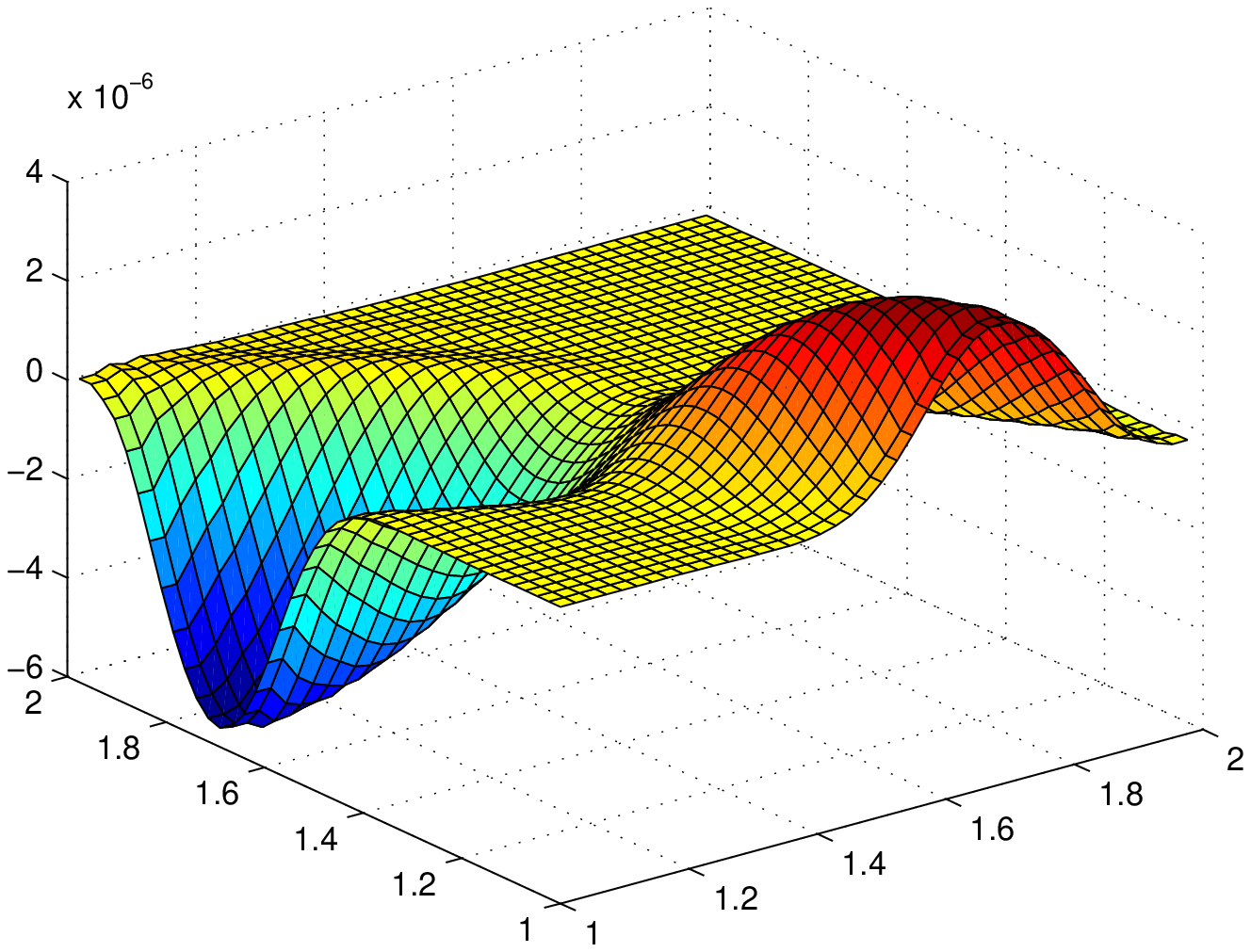}}
\end{minipage} 
\begin{minipage}[c]{0.45\textwidth}
\subfigure[Difference for $nu_y$ \label{diffv2}]{
\includegraphics[width=\textwidth]{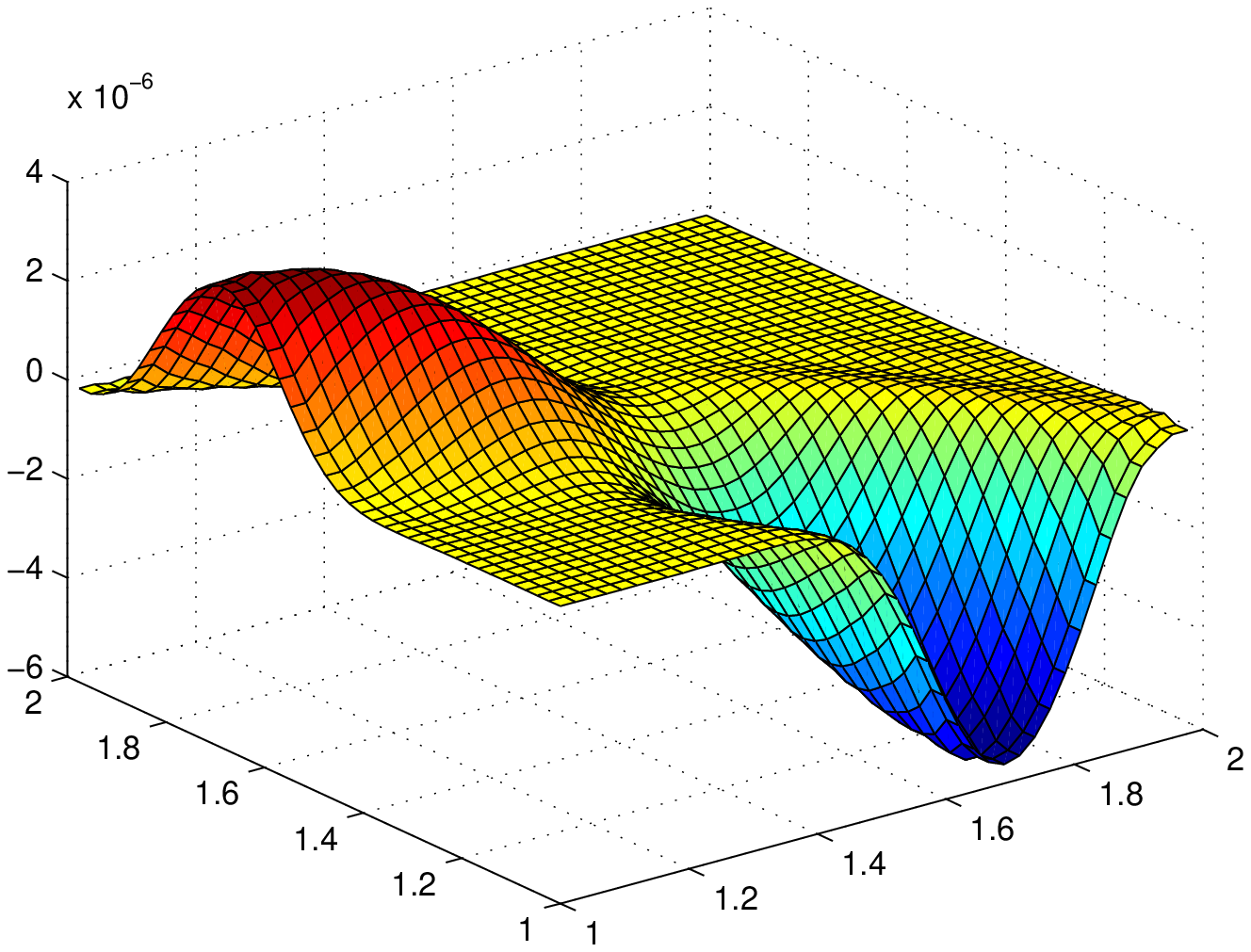}} 
\end{minipage} \hfill
\begin{minipage}[c]{0.5\textwidth}
\caption{Difference between the solutions obtained with an initial
  perturbation of order $\varepsilon=10^{-9}$ and the solution without
  any perturbation for the variable
  magnetic field after $1.58 \, s$ of simulation in the non resolved case.}\label{diff}
\end{minipage}
\end{center}
\end{figure}%%

%%%%%%%%%%%%%%%%%%%%%%%%%%%%%%%%%%%%%%%%%%%%%%%%%%%%%%%%%%%%%%%%%%
%%%%%%%%%%%%%%%%%%%%%%%%%%%%%%%%%%%%%%%%%%%%%%%%%%%%%%%%%%%%%%%%%%
%%%%%%%%%%%%%%%%%%%%%%%%%%%%%%%%%%%%%%%%%%%%%%%%%%%%%%%%%%%%%%%%%%
%%%%%%%%%%%%%%%%%%%%%%%%%%%%%%%%%%%%%%%%%%%%%%%%%%%%%%%%%%%%%%%%%%
\setcounter{equation}{0}
\section{Conclusion and perspectives}

A numerical method for degenerate anisotropic elliptic problems has been investigated. This method is based on a variational formulation together with a
decomposition of the solution. This problem has been
applied to the resolution of an Asymptotic-Preserving scheme for the isothermal Euler-Lorentz
system. Numerical simulations demonstrate the ability of the scheme to handle under-resolved situations where the time-step exceeds the CFL stability condition of the classical scheme. 

Forthcoming works will be devoted to the generalization of this
approach for the full Euler system with a non linear pressure law. In
this case non linear anisotropic elliptic problem have to be
handled. Moreover we can also deal with the more physical situation of
a plasma constituted by a mixture of ions and electrons. In this
situation the model can be described by the two-fluid Euler-Lorentz
system coupled with quasi-neutrality equation.

\bibliographystyle{abbrv}
\bibliography{bib}

\end{document}